\DeclarePairedDelimiter\floor{\lfloor}{\rfloor}
\title[Plenty of big projections imply BPLG]{Plenty of big projections imply \\ big pieces of Lipschitz graphs}
\author{Tuomas Orponen}
\address{University of Jyv\"askyl\"a, Department of Mathematics and Statistics}
\email{tuomas.t.orponen@jyu.fi}
\date{\today}
\subjclass[2010]{28A75 (Primary), 28A78 (Secondary)}
\thanks{T.O. is supported by the Academy of Finland via the project \emph{Quantitative rectifiability in Euclidean and non-Euclidean spaces}, grant Nos. 309365, 314172, and via the project \emph{Incidences on Fractals}, grant No. 321896.}
\keywords{Projections, Big Pieces of Lipschitz graphs, Quantitative rectifiability}
\newcommand{\R}{\mathbb{R}}
\newcommand{\N}{\mathbb{N}}
\newcommand{\Z}{\mathbb{Z}}
\newcommand{\calT}{\mathcal{T}}
\newcommand{\calL}{\mathcal{L}}
\newcommand{\calD}{\mathcal{D}}
\newcommand{\calH}{\mathcal{H}}
\newcommand{\calB}{\mathcal{B}}
\newcommand{\calF}{\mathcal{F}}
\newcommand{\spt}{\operatorname{spt}}
\newcommand{\spa}{\operatorname{span}}
\newcommand{\diam}{\operatorname{diam}}
\newcommand{\card}{\operatorname{card}}
\newcommand{\dist}{\operatorname{dist}}
\newcommand{\w}{\mathrm{width}}
\newcommand{\ww}{\mathfrak{w}}
\def\Barint_#1{\mathchoice
          {\mathop{\vrule width 6pt height 3 pt depth -2.5pt
                  \kern -8pt \intop}\nolimits_{#1}}%
          {\mathop{\vrule width 5pt height 3 pt depth -2.6pt
                  \kern -6pt \intop}\nolimits_{#1}}%
          {\mathop{\vrule width 5pt height 3 pt depth -2.6pt
                  \kern -6pt \intop}\nolimits_{#1}}%
          {\mathop{\vrule width 5pt height 3 pt depth -2.6pt
                  \kern -6pt \intop}\nolimits_{#1}}}
\numberwithin{equation}{section}
\theoremstyle{plain}
\newtheorem{thm}[equation]{Theorem}
\newtheorem{lemma}[equation]{Lemma}
\newtheorem{proposition}[equation]{Proposition}
\newtheorem{question}{Question}
\newtheorem{claim}[equation]{Claim}
\theoremstyle{definition}
\newtheorem{definition}[equation]{Definition}
\theoremstyle{remark}
\newtheorem{remark}[equation]{Remark}
\newcommand{\nref}[1]{(\hyperref[#1]{#1})}
\begin{document}

\begin{abstract} I prove that closed $n$-regular sets $E \subset \R^{d}$ with plenty of big projections have big pieces of Lipschitz graphs. In particular, these sets are uniformly $n$-rectifiable. This answers a question of David and Semmes from 1993. \end{abstract}

\maketitle

\tableofcontents

\section{Introduction}

I start by introducing the key concepts of the paper. A Radon measure $\mu$ on $\R^{d}$ is called \emph{$s$-regular}, $s \geq 0$, if there exists a constant $C_{0} \geq 1$ such that
\begin{displaymath} C_{0}^{-1}r^{s} \leq \mu(B(x,r)) \leq C_{0}r^{s}, \qquad x \in \spt \mu, \, 0 < r < \diam(\spt \mu). \end{displaymath}
A set $E \subset \R^{d}$ is called $s$-regular if $E$ is closed, and the restriction of $s$-dimensional Hausdorff measure $\calH^{s}$ on $E$ is an $s$-regular Radon measure. An $n$-regular set $E \subset \R^{d}$ has \emph{big pieces of Lipschitz graphs} (BPLG) if the following holds for some constants $\theta,L > 0$: for every $x \in E$ and $0 < r < \diam(E)$, there exists an $n$-dimensional $L$-Lipschitz graph $\Gamma \subset \R^{d}$, which may depend on $x$ and $r$, such that
\begin{equation}\label{BPLG} \calH^{n}( B(x,r) \cap E \cap \Gamma) \geq \theta r^{n}. \end{equation}
By an $n$-dimensional $L$-Lipschitz graph, I mean a set of the form $\Gamma = \{v + f(v) : v \in V\}$, where $V \subset \R^{d}$ is an $n$-dimensional subspace, and $f \colon V \to V^{\perp}$ is $L$-Lipschitz. Sometimes it is convenient to call $\Gamma = \{v + f(v): v \in V\}$ an \emph{$L$-Lipschitz graph over $V$}. The BPLG property is stronger than \emph{uniform $n$-rectifiability}, see Section \ref{s:UR} for more discussion.

Let $G(d,n)$ be the \emph{Grassmannian} of all $n$-dimensional subspaces of $\R^{d}$, equipped with a natural metric which is invariant under the action of the orthogonal group $\mathcal{O}(d)$. See Section \ref{s:Grassmannian} for details. For $V \in G(d,n)$, let $\pi_{V}$ be the orthogonal projection to $V$. It is straightforward to check, see \cite[Proposition 1.4]{MR3790063}, that if $E \subset \R^{d}$ is an $n$-regular set with BPLG, then $E$ has many projections of positive $\calH^{n}$ measure: more accurately, if $\Gamma$ in \eqref{BPLG} is an $L$-Lipschitz graph over $V_{0} \in G(d,n)$, then there is a constant $\delta > 0$, depending only on $d,L,\theta$, such that
\begin{displaymath} \calH^{n}(\pi_{V}(B(x,r) \cap E)) \geq \calH^{n}(\pi_{V}(B(x,r) \cap E \cap \Gamma)) \geq \delta r^{n}, \qquad V \in B_{G(d,n)}(V_{0},\delta). \end{displaymath} 
David and Semmes asked in their 1993 paper \cite{MR1132876} whether a converse holds: are sets with BPLG precisely the ones with plenty of big projections? The problem is also mentioned in the monograph \cite[p. 29]{MR1251061} and, less precisely, in the 1994 ICM lecture of Semmes \cite{MR1403987}.
\begin{definition}[BP and PBP]\label{d:PBP} An $n$-regular set $E \subset \R^{d}$ has \emph{big projections} (BP) if there exists a constant $\delta > 0$ such that the following holds. For every $x \in E$ and $0 < r < \diam(E)$, there exists at least one plane $V = V_{x,r} \in G(d,n)$ such that 
\begin{equation}\label{form82} \calH^{n}(\pi_{V}(B(x,r) \cap E)) \geq \delta r^{n}. \end{equation}
The set $E$ has \emph{plenty of big projections} (PBP) if \eqref{form82} holds for all $V \in B(V_{x,r},\delta)$.
\end{definition}
In \cite[Definition 1.12]{MR1132876}, the PBP condition was called \emph{big projections in plenty of directions}. As noted above Definition \ref{d:PBP}, sets with BPLG have PBP. Conversely, one of the main results in \cite{MR1132876} states that even the weaker "single big projection" condition BP is sufficient to imply BPLG if it is paired with the following \emph{a priori} geometric hypothesis:
\begin{definition}[WGL]\label{wgl} An $n$-regular set $E \subset \R^{d}$ satisfies the \emph{weak geometric lemma} (WGL) if for all $\epsilon > 0$ there exists a constant $C(\epsilon) > 0$ such that the following (Carleson packing condition) holds:
\begin{displaymath} \int_{0}^{R} \calH^{n}(\{x \in E \cap B(x_{0},R) : \beta(B(x,r)) \geq \epsilon\}) \, \frac{dr}{r} \leq C(\epsilon)R, \quad x_{0} \in E, \, 0 < R < \diam(E). \end{displaymath} 
\end{definition}
In the definition above, the quantity $\beta(B(x,r))$ could mean a number of different things without changing the class of $n$-regular sets satisfying Definition \ref{wgl}. In the current paper, the most convenient choice is
\begin{displaymath} \beta(B(x,r)) := \beta_{1}(B(x,r)) := \inf_{V \in \mathcal{A}(d,n)} \frac{1}{r^{n}} \int_{B(x,r)} \frac{\dist(y,V)}{r} \, d\mu(y) \end{displaymath} 
with $\mu := \calH^{n}|_{E}$, and where $\mathcal{A}(d,n)$ is the "affine Grassmannian" of all $n$-dimensional planes in $\R^{d}$. The $\beta$-number above is an "$L^{1}$-variant" of the original "$L^{\infty}$-based $\beta$-number" introduced by Jones \cite{Jones:TSP}, namely
\begin{displaymath} \beta_{\infty}(B(x,r)) := \inf_{V \in \mathcal{A}(d,n)} \sup_{y \in E \cap B(x,r)} \frac{\dist(y,V)}{r}. \end{displaymath}
If $E \subset \R^{d}$ is $n$-regular, then the following relation holds between the two $\beta$-numbers:
\begin{displaymath} \beta_{\infty}(B(x,r)) \lesssim \beta(B(x,2r))^{1/(n + 1)}, \qquad x \in E, \, 0 < r < \diam(E). \end{displaymath} 
For a proof, see \cite[p. 28]{DS1}. This inequality shows that the WGL, a condition concerning all $\epsilon > 0$ simultaneously, holds for the numbers $\beta(B(x,r))$ if and only if it holds for the numbers $\beta_{\infty}(B(x,r))$.

After these preliminaries, the result of David and Semmes \cite[Theorem 1.14]{MR1132876} can be stated as follows:
\begin{thm}[David-Semmes]\label{t:DS} An $n$-regular set $E \subset \R^{d}$ has \textup{BPLG} if and only if $E$ has \textup{BP} and satisfies the \textup{WGL}. 
\end{thm}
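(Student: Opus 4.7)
The plan is to address the two implications separately, the reverse being substantially harder. For the forward direction \textup{BPLG} $\Rightarrow$ \textup{BP} $+$ \textup{WGL}, the BP conclusion is immediate from the short computation recorded just above Definition \ref{d:PBP}. For the WGL, I would first establish the well-known fact that a single $n$-dimensional $L$-Lipschitz graph $\Gamma$ satisfies the Jones-type quadratic packing $\int_{0}^{R} \beta_{\Gamma}(B(x,r))^{2} \, dr/r \lesssim_{L} R$, by direct calculation against the Lipschitz parametrisation. Given BPLG on $E$, fix $\epsilon > 0$ and a ball $B(x_{0},R)$, and run a stopping-time argument on the Christ--David cubes inside $B(x_{0},R)$: at each maximal cube $Q$ with $\beta(B_{Q}) \geq \epsilon$, extract a Lipschitz graph $\Gamma_{Q}$ from BPLG covering a $\theta$-fraction of $E \cap B_{Q}$, and charge the $\beta$-mass inside $Q$ to the single-graph estimate for $\Gamma_{Q}$. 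Since each stopping step strips off a definite $\theta$-fraction of mass, iteration converts the total $\beta$-packing on $E$ into a geometric series, giving the WGL.

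For the reverse direction \textup{BP} $+$ \textup{WGL} $\Rightarrow$ \textup{BPLG}, the strategy is a corona decomposition. Fix $\eta > 0$ small (to be chosen depending on $\delta,d,n$) and use the WGL to partition the Christ--David cubes on $E$ into trees $\{\mathcal{T}\}$ such that every $Q \in \mathcal{T}$ satisfies $\beta(B_{Q}) < \eta$, and the family of tree-tops is a Carleson family. By a standard Carleson-averaging/good-$\lambda$ reduction, BPLG on $E$ follows from the per-tree statement: for each tree with top $Q_{0}$, a fraction $\gtrsim 1$ of $E \cap Q_{0}$ sits on an $L(\eta)$-Lipschitz graph.

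To prove the per-tree statement, let $V_{0} + p_{0}$ be the affine $n$-plane almost attaining $\beta(B_{Q_{0}})$. The inequality $\beta_{\infty} \lesssim \beta^{1/(n+1)}$ quoted in the text confines $E \cap B_{Q_{0}}$ to a tube of width $\lesssim \eta^{1/(n+1)} r_{Q_{0}}$ around $V_{0} + p_{0}$. Apply BP at $B_{Q_{0}}$ to obtain $V^{*} \in G(d,n)$ with $\calH^{n}(\pi_{V^{*}}(E \cap B_{Q_{0}})) \geq \delta r_{Q_{0}}^{n}$; a compression estimate forces $V^{*}$ within angle $O(\eta^{1/(n+1)})$ of $V_{0}$, since otherwise the thin tube projects with area $o(r_{Q_{0}}^{n})$, contradicting BP for $\eta$ small. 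The candidate graph is then the restriction of $E \cap Q_{0}$ to the subset $F$ of points whose $\pi_{V^{*}}$-fiber remains essentially unique at every scale. By propagating the tube bound to each descendant cube in $\mathcal{T}$ (each with $\beta < \eta$), one controls $\angle(y - x, V^{*})$ uniformly for $x,y \in F$, which is exactly the Lipschitz graph condition over $V^{*}$.

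The main obstacle is showing $\calH^{n}(F) \gtrsim r_{Q_{0}}^{n}$: one must track, across the entire tree, the set of points at which either the $\pi_{V^{*}}$-fiber spreads or a descendant approximating plane tilts further than $O(\eta^{1/(n+1)})$ from $V_{0}$, and show that these bad sets together consume only a small fraction of the BP-projected mass $\delta r_{Q_{0}}^{n}$. This is the delicate bookkeeping step where the BP hypothesis (fixing $V^{*}$ and ruling out transverse thinness) interacts nontrivially with the Carleson packing supplied by the WGL; losing control of the angle drift at a single generation of descendants would break the Lipschitz bound, so the tree must be engineered (by choosing $\eta$ small enough relative to $\delta$) so that the cumulative drift over all generations in $\mathcal{T}$ remains below a fixed threshold.
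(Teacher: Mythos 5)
This theorem is not proved in the paper at all: it is quoted verbatim from David and Semmes \cite[Theorem 1.14]{MR1132876}, so there is no in-paper proof against which to compare your argument. Evaluating the proposal on its own merits, it outlines a plausible scheme but leaves the heart of the matter open.

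On the forward direction, the BP part is fine, but the passage from BPLG to WGL via a stopping-time is not quite as automatic as you indicate. Charging $\beta_{E}(Q)$ to $\beta_{\Gamma_{Q}}(Q)$ does not work directly, because the $\beta$-number of $E$ in $B_{Q}$ sees the portion of $E$ \emph{off} $\Gamma_{Q}$, and that portion need not be small. The correct route is the David--Semmes stability theorem that ``big pieces of a class satisfying a Carleson packing condition'' yields the same Carleson packing condition for $E$, which requires an absorbing/iteration argument on the mass not captured by $\Gamma_{Q}$; your ``geometric series'' phrasing is pointing in that direction, but the nontrivial work of converting a fixed-fraction big piece at every scale into a uniform Carleson bound is exactly what is skipped.

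On the reverse direction, your sketch identifies the corona decomposition, the role of $\beta_{\infty} \lesssim \beta^{1/(n+1)}$ for tube confinement, and the compression argument forcing the BP direction $V^{*}$ close to the tube direction $V_{0}$. These are all legitimate ingredients. However, the step you yourself flag as ``the main obstacle'' --- showing that the fiber-uniqueness set $F$ has measure $\gtrsim r_{Q_{0}}^{n}$ and that the angle drift across the whole tree stays controlled --- is precisely the content of \cite{MR1132876}, and it is where the proof actually lives. In particular, it is not clear from your outline whether BP is used only at tree tops or must be invoked at many scales (the latter is needed in general, since small $\beta$ on a tree does not preclude ``compression'' of $E$ at intermediate scales that would destroy the fiber-uniqueness set $F$), nor how the bad sets across generations are summed against the BP-projected mass. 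A proposal that names the hard step but does not execute it is an outline, not a proof. Since the paper simply cites this result, the honest comparison is that both you and the paper defer to David--Semmes for the substance; the difference is that the paper says so explicitly.
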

The four corners Cantor set has BP (find a direction where the projections of the four boxes tile an interval) but fails to have BPLG, being purely $1$-unrectifiable. This means that the WGL hypothesis cannot be omitted from the previous statement. However, the four corners Cantor set fails to have PBP, by the Besicovitch projection theorem \cite{MR1513231}, which states that almost every projection of a purely $1$-rectifiable set of $\sigma$-finite length has measure zero. The main result of this paper shows that PBP alone implies BPLG:
\begin{thm}\label{main} Let $E \subset \R^{d}$ be an $n$-regular set with \textup{PBP}. Then $E$ has \textup{BPLG}.  \end{thm}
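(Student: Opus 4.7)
By Theorem~\ref{t:DS} and the trivial implication $\mathrm{PBP}\Rightarrow \mathrm{BP}$, proving Theorem~\ref{main} reduces to showing that every $n$-regular set $E \subset \R^{d}$ with PBP satisfies the WGL of Definition~\ref{wgl}. My strategy is a blow-up/contradiction argument. If WGL failed at some threshold $\epsilon_{0}>0$, there would be $x_{k}\in E$ and $R_{k}>0$ along which the Carleson integral in Definition~\ref{wgl} exceeds $k R_{k}$. Rescaling via $T_{k}(y):=(y-x_{k})/R_{k}$ and passing to a weak subsequential limit of the measures $\mu_{k}:=R_{k}^{-n}(T_{k})_{\ast}(\calH^{n}|_{E})$ produces an $n$-regular tangent measure $\mu_{\infty}$ on $\R^{d}$ with the same regularity constants as $\calH^{n}|_{E}$ that (i) inherits PBP with the same constant $\delta$, by lower semicontinuity of projection measures under weak convergence combined with the regularity of the limit, and (ii) quantitatively violates the WGL at unit scale, e.g.\ via Fatou applied to the Carleson integral.

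The heart of the argument would then be a rigidity lemma: \emph{every $n$-regular Radon measure on $\R^{d}$ with PBP satisfies the WGL, with constants depending only on $d,n,\delta,C_{0}$.} Applied to $\mu_{\infty}$ this contradicts~(ii). Qualitatively such a lemma is suggested by the Besicovitch-Federer projection theorem: purely $n$-unrectifiable sets project to null sets in $\gamma_{d,n}$-a.e.\ direction, so the PBP ball $B(V_{x,r},\delta)$ forbids pure unrectifiability. The quantitative upgrade I would aim for is to prove BPLG for $\mu_{\infty}$ directly, bypassing David-Semmes in this rigid tangent setting: use PBP to fix a base plane $V_{0}$, then construct a subset of $\spt(\mu_{\infty})\cap B(0,1)$ of measure $\gtrsim 1$ on which $\pi_{V_{0}}$ has bounded multiplicity, which then yields a Lipschitz graph over $V_{0}$ by McShane extension.

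The main obstacle, where I expect the bulk of the technical work to live, is making the multiplicity bound genuinely quantitative without assuming any rectifiability a priori. The natural route is a stopping-time argument on the dyadic cubes of $\mu_{\infty}$, combined with integral-geometric identities of the form $\int_{V}\#\pi_{V}^{-1}(y)\,dy \lesssim \mu_{\infty}(B)$ for cubes $B$, and an averaging over directions $V\in B(V_{0},\delta)$ supplied by PBP. High multiplicity at a cube $Q$ should translate into a definite loss of projection measure in some direction near $V_{0}$, and summing these losses across scales must either be absorbed by the PBP lower bound $\delta r^{n}$ — yielding the Carleson packing bound for $\{\beta\geq\epsilon_{0}\}$ — or contradict PBP outright. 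The delicate point is that the loss-of-projection estimate must be uniform in $V\in B(V_{0},\delta)$ and summable in a genuinely scale-invariant way; I expect this is where refined tools (quantitative Besicovitch-Federer, a sharp multiplicity lemma, or a clever Grassmannian averaging) are indispensable.
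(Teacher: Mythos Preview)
Your proposal has a genuine gap: the blow-up argument is circular. The ``rigidity lemma'' you need---that every $n$-regular measure with PBP satisfies WGL (or BPLG) with constants depending only on $d,n,\delta,C_{0}$---is precisely the statement you are trying to prove. The tangent measure $\mu_{\infty}$ inherits no structure beyond being $n$-regular with PBP; it is not a cone, not dilation-invariant, and enjoys no additional rigidity that makes the problem easier than for the original set $E$. Compactness arguments of this flavour succeed when the limiting object acquires extra rigidity (e.g.\ tangent measures at density points being flat), but here the counter-assumption concerns a Carleson packing, and your limit is simply another instance of the hypothesis. If you could prove your rigidity lemma, you could apply it directly to $E$ and skip the blow-up entirely. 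Your final paragraph essentially concedes this: the ``main obstacle'' you identify---a quantitative multiplicity/projection-loss estimate with no a priori rectifiability---\emph{is} the whole theorem, and the tangent construction does nothing to relax it.

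There are also technical problems with steps (i) and (ii). The PBP quantity $\calH^{n}(\pi_{V}(E\cap B))$ is only \emph{upper} semicontinuous under Hausdorff convergence of supports, so passing PBP to the weak limit is not automatic. Likewise, the $\beta$-number is an infimum over affine planes and is therefore upper, not lower, semicontinuous in $\mu$; the Fatou step you invoke for the Carleson integral goes the wrong way.

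For comparison, the paper's proof is direct and finitary. It reformulates WGL via Lemma~\ref{l:WGL} and then argues by contradiction at a single fixed cube $Q_{0}$: a stopping-time construction produces many ``heavy trees'' $\calT_{j}$ whose leaves each lie under $M$ ancestors with $\beta\geq\epsilon$. The key new device is the \emph{width} functional $\w(\calT)$ (Definition~\ref{width}), which obeys a Carleson packing bound (Proposition~\ref{widthCarleson}). The substance is Proposition~\ref{treeProp}: each heavy tree has $\w(\calT_{j})\gtrsim\mu(Q(\calT_{j}))$. This is proved by first using Proposition~\ref{prop1} (which in turn invokes Proposition~\ref{MOProp}) to convert many big $\beta$-numbers into many dyadic annuli where cones $X(x,V_{0},\alpha)$ meet the leaf set, and then running a quantitative Besicovitch--Federer argument (Section~\ref{s:heavyTrees}) to show the Hardy--Littlewood maximal function of the projected leaf indicators is large on a substantial set of directions; this feeds into the width criterion Proposition~\ref{prop2}. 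Summing the width lower bounds over the heavy trees contradicts the Carleson upper bound. None of this machinery is accessible via a tangent-measure reduction.
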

To prove Theorem \ref{main}, all one needs to show is that
\begin{displaymath} \textup{PBP} \quad \Longrightarrow \quad \textup{WGL}. \end{displaymath}
The rest then follows from the work of David and Semmes, Theorem \ref{t:DS}. 

\subsection{Connection to uniform rectifiability}\label{s:UR} The BPLG property is a close relative of \emph{uniform $n$-rectifiability}, introduced by David and Semmes \cite{DS1} in the early 90s. An $n$-regular set $E \subset \R^{d}$ is uniformly $n$-rectifiable, $n$-UR in brief, if \eqref{BPLG} holds for some \emph{$n$-dimensional $L$-Lipschitz images} $\Gamma = f(B(0,r))$, with $B(0,r) \subset \R^{n}$, instead of $n$-dimensional $L$-Lipschitz graphs. As shown by David and Semmes in \cite{DS1,MR1251061}, the $n$-UR property has many equivalent, often surprising characterisations: for example, singular integrals with odd $n$-dimensional kernels are $L^{2}$-bounded on an $n$-regular set $E \subset \R^{d}$ if and only if $E$ is $n$-UR. Since its conception, the study of uniform (and, more generally, quantitative) rectifiability has become an increasingly popular topic, for a good reason: techniques in the area have proven fruitful in solving long-standing problems on harmonic measure and elliptic PDEs \cite{MR3540451,MR3707490,MR3807315,ntov}, theoretical computer science \cite{NY}, and metric embedding theory \cite{NY2}. This list of references is hopelessly incomplete!

Since $n$-dimensional Lipschitz graphs can be written as $n$-dimensional Lipschitz images, $n$-regular sets with BPLG are $n$-UR. In particular, Theorem \ref{main} implies that $n$-regular sets with PBP are $n$-UR. The converse is false: Hrycak (unpublished) observed in the 90s that a simple iterative construction can be used to produce $1$-regular compact sets $K_{\epsilon} \subset \R^{2}$, $\epsilon > 0$, with the properties
\begin{itemize}
\item[(a)] $\mathcal{H}^{1}(K_{\epsilon}) = 1$ and $\calH^{1}(\pi_{L}(K_{\epsilon})) < \epsilon$ for all $L \in G(2,1)$, 
\item[(b)] $K_{\epsilon}$ is $1$-UR with constants independent of $\epsilon > 0$.
\end{itemize}
This means that UR sets do not necessarily have PBP, or at least bounds for $n$-UR constants do not imply bounds for PBP constants. The details of Hrycak's construction are contained in the appendix of Azzam's paper \cite{2017arXiv171103088A}, but they can also be outlined in a few words: pick $n := \floor{\epsilon^{-1}}$. Sub-divide $I_{0} := [0,1] \times \{0\} \subset \R^{2}$ into $n$ segments $I_{1},\ldots,I_{n}$ of equal length, and rotate them individually counter-clockwise by $2\pi/n$. Then, sub-divide each $I_{j}$ into $n$ segments of equal length, and rotate by $2\pi/n$ again. Repeat this procedure $n$ times to obtain a compact set $K_{n} = K_{\epsilon}$ consisting of $n^{n}$ segments of length $n^{-n}$. It is not hard to check that (a) and (b) hold for $K_{\epsilon}$. In particular, to check (b), one can easily cover $K_{\epsilon}$ by a single $1$-regular continuum $\Gamma \subset B(0,2)$ of length $\calH^{1}(\Gamma) \leq 10$.

\subsection{Previous and related work} It follows from the Besicovitch-Federer projection theorem \cite{MR1513231,MR22594} that an $n$-regular set with PBP is $n$-rectifiable. The challenge in proving Theorem \ref{main} is to upgrade this "qualitative" property to BPLG. For general compact sets in $\R^{2}$ of finite $1$-dimensional measure, a quantitative version of the Besicovitch projection theorem is due to Tao \cite{MR2500864}. It appears, however, that Theorem \ref{main} does not follow from his work, not even in $\R^{2}$. Another, more recent, result for general $n$-regular sets is due to Martikainen and myself \cite{MR3790063}: the main result of \cite{MR3790063} shows that BPLG is equivalent to a property (superficially) stronger than PBP. This property roughly states that the $\pi_{V}$-projections of the measure $\calH^{n}|_{E}$ lie in $L^{2}(V)$ on average over $V \in B_{G(d,n)}(V_{0},\delta)$. One of the main propositions from \cite{MR3790063} also plays a part in the present paper, see Proposition \ref{MOProp}. Interestingly, while the main result of the current paper is formally stronger than the result in \cite{MR3790063}, the new proof does not supersede the previous one: in \cite{MR3790063}, the $L^{2}$-type assumption in a fixed ball was used to produce a big piece of a Lipschitz graph in the very same ball. Here, on the contrary, PBP needs to be employed in many balls, potentially much smaller than the "fixed ball" one is interested in. Whether this is necessary or not is posed as Question \ref{q1} below.

Besides Tao's paper mentioned above, there is plenty of recent activity around the problem of quantifying Besicovitch's projection theorem, that is, showing that "quantitatively unrectifiable sets" have quantifiably small projections. As far as I know, Tao's paper is the only one dealing with general sets, while other authors, including Bateman, Bond, {\L}aba, Nazarov, Peres, Solomyak, and Volberg have concentrated on self-similar sets of various generality \cite{MR2727621,MR3188064,MR2652491,MR3129103,MR3526481,MR2641082,MR1907902}. In these works, strong upper (and some surprising lower) bounds are obtained for the \emph{Favard length} of the $k^{th}$ iterate of self-similar sets. In the most recent development \cite{2020arXiv200303620C}, Cladek, Davey, and Taylor considered the \emph{Favard curve length} of the four corners Cantor set.

Quantifying the Besicovitch projection theorem is related to an old problem of Vitushkin. The remaining open question is to determine whether arbitrary compact sets $E \subset \R^{2}$ of positive Favard length have positive \emph{analytic capacity}. It seems unlikely that the method of the present paper would have any bearing on Vitushkin's problem, but the questions are not entirely unrelated either: I refer to the excellent introduction in the paper \cite{2017arXiv171200594C} of Chang and Tolsa for more details.

Finally, Theorem \ref{main} can be simply viewed as a characterisation of the BPLG property, of which there are not many available -- in contrast to uniform rectifiability, which is charaterised by seven conditions in \cite{DS1} alone! I already mentioned that BPLG is equivalent to BP+WGL by \cite{MR1132876}, and that with Martikainen \cite{MR3790063}, we characterised BPLG via the $L^{2}$-norms of the projections $\pi_{V\sharp}\mathcal{H}^{n}|_{E}$. Another, very recent, characterisation of BPLG, in terms of \emph{conical energies}, is due to {D{\k{a}}browski} \cite{2020arXiv200614432D}.

\subsection{An open problem} An answer to the question below does not seem to follow from the method of this paper. 
\begin{question}\label{q1} For all $\delta > 0$ and $C_{0} \geq 1$, do there exist $L \geq 1$ and $\theta > 0$ such that the following holds? Whenever $E \subset \R^{d}$ is an $n$-regular set with regularity constant at most $C_{0}$, and
\begin{equation}\label{form111} \calH^{n}(\pi_{V}(B(0,1) \cap E)) \geq \delta, \qquad V \in B_{G(d,n)}(V_{0},\delta), \end{equation}
then there exists an $n$-dimensional $L$-Lipschitz graph $\Gamma \subset \R^{d}$ such that $\calH^{n}(E \cap \Gamma) \geq \theta$. \end{question} 
In addition to the "single scale" assumption \eqref{form111}, the proof of Theorem \ref{main} requires information about balls much smaller than $B(0,1)$ to produce the Lipschitz graph $\Gamma$.

\subsection{Notation} An open ball in $\R^{d}$ with centre $x \in \R^{d}$ and radius $r > 0$ will be denoted $B(x,r)$. When $x = 0$, I sometimes abbreviate $B(x,r) =: B(r)$. The notations $\mathrm{rad}(B)$ and $\diam(B)$ mean the radius and diameter of a ball $B \subset \R^{d}$, respectively, and $\lambda B := B(x,\lambda r)$ for $B = B(x,r)$ and $\lambda > 0$.

For $A,B > 0$, the notation $A \lesssim_{p_{1},\ldots,p_{k}} B$ means that there exists a constant $C \geq 1$, depending only on the parameters $p_{1},\ldots,p_{k}$, such that $A \leq CB$. Very often, one of these parameters is either the ambient dimension "$d$", or then the PBP or $n$-regularity constant "$\delta$" or "$C_{0}$" of a fixed $n$-regular set $E \subset \R^{d}$ having PBP, that is, satisfying the hypotheses of Theorem \ref{main}. In these cases, the dependence is typically omitted from the notation: in other words, $A \lesssim_{d,\delta,C_{0}} B$ is abbreviated to $A \lesssim B$. The two-sided inequality $A \lesssim_{p} B \lesssim_{p} A$ is abbreviated to $A \sim_{p} B$, and $A \gtrsim_{p} B$ means the same as $B \lesssim_{p} A$.

\subsection{Acknowledgements} I would like to thank Michele Villa for useful conversations, and Alan Chang for pointing out a mistake in the proof of Lemma \ref{lemma1} in an earlier version of the paper. I'm also grateful to Damian {D{\k{a}}browski} for reading the paper carefully and giving many useful comments. Finally, I am grateful to the anonymous reviewers for their careful reading, and for spotting a large number of small inaccuracies.

\section{Preliminaries on the Grassmannian}\label{s:Grassmannian}

Before getting started, we gather here a few facts of the Grassmannian $G(d,n)$ of $n$-dimensional subspaces of $\R^{d}$. Here $0 \leq n \leq d$, and the extreme cases are $G(d,0) = \{0\}$ and $G(d,d) = \{\R^{d}\}$. We equip $G(d,n)$ with the metric
\begin{displaymath} d(V_{1},V_{2}) := \|\pi_{V_{1}} - \pi_{V_{2}}\|, \qquad V_{1},V_{2} \in G(d,n), \end{displaymath}
where $\|\cdot\|$ refers to operator norm. That "$d$" means two different things here is regrettable, but the correct interpretation should always be clear from context, and the metric "$d$" will only be used very occasionally. The metric space $(G(d,n),d)$ is compact, and open balls in $G(d,n)$ will be denoted $B_{G(d,n)}(V,r)$. An equivalent metric on $G(d,n)$ is given by
\begin{displaymath} \bar{d}(V_{1},V_{2}) := \max\{\dist(v_{1},V_{2}) : v_{1} \in V_{1} \text{ and } |v_{1}| = 1\}. \end{displaymath}
For a proof, see \cite[Lemma 4.1]{MR2729011}. With the equivalence of $d$ and $\bar{d}$ in hand, we easily infer the following auxiliary result:
\begin{lemma}\label{GrLemma} Let $0 < n < d$, and let $W_{1},W_{2} \in G(d,n + 1)$, and let $V_{1} \in G(d,n)$ with $V_{1} \subset W_{1}$. Then, there exists $V_{2} \in G(d,n)$ such that $V_{2} \subset W_{2}$ and $d(V_{1},V_{2}) \lesssim d(W_{1},W_{2})$.
\end{lemma}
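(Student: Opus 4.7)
The plan is to construct $V_{2}$ as the orthogonal projection $\pi_{W_{2}}(V_{1})$ of $V_{1}$ onto $W_{2}$, and then use the equivalence of the metrics $d$ and $\bar{d}$ to translate distance estimates between the metrics.

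First, I would dispose of the trivial case. Since $(G(d,n),d)$ is compact, its diameter is bounded by some absolute constant $D = D(d)$. Thus, if $d(W_{1},W_{2}) \geq c$ for a sufficiently small threshold $c > 0$ to be chosen, then any $n$-dimensional subspace $V_{2} \subset W_{2}$ (which exists because $n < n+1 = \dim W_{2}$) satisfies $d(V_{1},V_{2}) \leq D \lesssim d(W_{1},W_{2})$. So I may assume $d(W_{1},W_{2}) < c$, and hence also $\bar{d}(W_{1},W_{2}) < c'$ for a correspondingly small $c'$, by the equivalence of $d$ and $\bar{d}$.

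Next, I would define $V_{2} := \pi_{W_{2}}(V_{1})$ and verify that $V_{2} \in G(d,n)$, i.e.\ that $\pi_{W_{2}}\!\mid_{V_{1}}$ is injective. If some unit vector $v \in V_{1} \subset W_{1}$ satisfied $\pi_{W_{2}}(v) = 0$, then $\mathrm{dist}(v,W_{2}) = 1$, forcing $\bar{d}(W_{1},W_{2}) \geq 1$. For $c'$ small enough this is impossible, so $\pi_{W_{2}}\!\mid_{V_{1}}$ is injective and $V_{2}$ is genuinely $n$-dimensional.

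For the distance estimate, given any unit vector $v_{1} \in V_{1}$, the element $\pi_{W_{2}}(v_{1})$ lies in $V_{2}$, so
\[ \dist(v_{1},V_{2}) \leq |v_{1} - \pi_{W_{2}}(v_{1})| = \dist(v_{1},W_{2}) \leq \bar{d}(W_{1},W_{2}), \]
where the last inequality uses $v_{1} \in W_{1}$ with $|v_{1}|=1$ together with the definition of $\bar{d}$. Taking the supremum over unit $v_{1} \in V_{1}$ yields $\bar{d}(V_{1},V_{2}) \leq \bar{d}(W_{1},W_{2})$, and since $\dim V_{1} = \dim V_{2} = n$, invoking the equivalence $d \sim \bar{d}$ on $G(d,n)$ (from \cite[Lemma 4.1]{MR2729011}) on both sides gives the desired $d(V_{1},V_{2}) \lesssim d(W_{1},W_{2})$.

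The only genuinely nontrivial point in the whole argument is ensuring $\dim V_{2} = n$; everything else is linear algebra. The small-perturbation threshold $c$ is what makes this work, and combining the two regimes (via the compactness of $G(d,n)$) is what keeps the implicit constant absolute.
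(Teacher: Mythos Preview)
Your proof is correct and follows essentially the same approach as the paper: dispose of the large-distance case by compactness, and in the small-distance regime construct $V_{2}$ inside $W_{2}$ by pushing $V_{1}$ into $W_{2}$ and estimating via $\bar{d}$. The only cosmetic difference is that the paper perturbs an orthonormal basis of $V_{1}$ into $W_{2}$ and checks linear independence, whereas you take $V_{2} = \pi_{W_{2}}(V_{1})$ directly and check injectivity---your version is in fact slightly cleaner.
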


\begin{proof} By the equivalence of $d$ and $\bar{d}$, we have $r := \bar{d}(W_{1},W_{2}) \lesssim d(W_{1},W_{2})$. We may assume that $r$ is small, depending on the ambient dimension, otherwise any $n$-dimensional subspace $V_{2} \subset W_{2}$ satisfies $d(V_{1},V_{2}) \leq \diam G(d,n) \lesssim r$. Now, let $\{e_{1},\ldots,e_{n}\}$ be an orthonormal basis for $V_{1}$, and for all $e_{j} \in V_{1} \subset W_{1}$, pick some $\bar{e}_{j} \in W_{2}$ with $|e_{j} - \bar{e}_{j}| \leq r$. If $r > 0$ is small enough, the vectors $\bar{e}_{1},\ldots,\bar{e}_{n}$ are linearly independent, hence span an $n$-dimensional subspace $V_{2} \subset W_{2}$. Since $|e_{j} - \bar{e}_{j}| \leq r$ for all $1 \leq j \leq n$, an arbitrary unit vector $v_{1} = \sum \beta_{j}e_{j} \in V_{1}$ lies at distance $\lesssim r$ from $v_{2} := \sum \beta_{j}\bar{e}_{j} \in V_{2}$, and consequently
\begin{displaymath} d(V_{1},V_{2}) \sim \bar{d}(V_{1},V_{2}) = \max\{\dist(v_{1},V_{2}) : v_{1} \in V_{1} \text{ and } |v_{1}| = 1\} \lesssim r. \end{displaymath} 
This completes the proof. \end{proof}
We will often use the standard "Haar" probability measure $\gamma_{d,n}$ on $G(d,n)$. Namely, let $\theta_{d}$ be the Haar measure on the orthogonal group $\mathcal{O}(d)$, and define
\begin{displaymath} \gamma_{d,n}(\mathcal{V}) := \theta_{d}(\{g \in \mathcal{O}(d) : gV_{0} \in \mathcal{V}\}), \qquad \mathcal{V} \subset G(d,n), \end{displaymath} 
where $V_{0} \in G(d,n)$ is any fixed subspace. The measure $\gamma_{d,n}$ is the unique $\mathcal{O}(d)$-invariant Radon probability measure on $G(d,n)$, see \cite[\S 3.9]{zbMATH01249699}. At a fairly late stage of the proof of Theorem \ref{main}, we will need the following "Fubini" theorem for the measure $G(d,n)$:
\begin{lemma}\label{l:fubini} Let $0 < n < d$. For $W \in G(d,n + 1)$, let $G(W,n) := \{V \in G(d,n) : V \subset W\}$. Then $G(W,n)$ can be identified with $G(n + 1,n)$, and we equip $G(W,n)$ with the Haar measure $\gamma_{W,n + 1,n} := \gamma_{n + 1,n}$, constructed as above. Then, the following holds for all Borel sets $B \subset G(d,n)$:
\begin{equation}\label{form109} \gamma_{d,n}(B) = \int_{G(d,n + 1)} \gamma_{W,n + 1,n}(B) \, d\gamma_{d,n + 1}(W). \end{equation}
\end{lemma}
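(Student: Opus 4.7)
The plan is to invoke the uniqueness of the $\mathcal{O}(d)$-invariant Radon probability measure on $G(d,n)$. Define a new Borel measure $\mu$ on $G(d,n)$ by
\begin{displaymath} \mu(B) := \int_{G(d,n+1)} \gamma_{W,n+1,n}(B \cap G(W,n)) \, d\gamma_{d,n+1}(W). \end{displaymath}
Since for each $W$ the inner integrand $\gamma_{W,n+1,n}(G(W,n)) = 1$, we have $\mu(G(d,n)) = 1$, so $\mu$ is a Radon probability measure. By \cite[\S 3.9]{zbMATH01249699} (cited in the paper), $\gamma_{d,n}$ is characterised as the unique $\mathcal{O}(d)$-invariant Radon probability measure on $G(d,n)$, so it suffices to verify that $\mu$ is $\mathcal{O}(d)$-invariant, that is, $\mu(gB) = \mu(B)$ for all Borel $B \subset G(d,n)$ and $g \in \mathcal{O}(d)$. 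Then the conclusion \eqref{form109} follows at once.

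The first step is to make sense of $\gamma_{W,n+1,n}$ more concretely. Fix an orthogonal isomorphism $\phi_{W} \colon W \to \R^{n+1}$; this induces a bijection $\Phi_{W} \colon G(W,n) \to G(n+1,n)$ by $\Phi_{W}(V) = \phi_{W}(V)$, and by construction $\gamma_{W,n+1,n} = (\Phi_{W})_{\#}^{-1}\gamma_{n+1,n}$. Two choices of $\phi_{W}$ differ by post-composition with an element of $\mathcal{O}(n+1)$, which preserves $\gamma_{n+1,n}$, so $\gamma_{W,n+1,n}$ does not depend on the choice of $\phi_{W}$.

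The key step is the invariance. Fix $g \in \mathcal{O}(d)$. For any $W \in G(d,n+1)$, the map $V \mapsto g^{-1}V$ is a bijection between $gB \cap G(W,n)$ and $B \cap G(g^{-1}W,n)$. Moreover, $g^{-1}|_{W} \colon W \to g^{-1}W$ is an orthogonal map, so the well-definedness above gives
\begin{displaymath} \gamma_{W,n+1,n}(gB \cap G(W,n)) = \gamma_{g^{-1}W,n+1,n}(B \cap G(g^{-1}W,n)). \end{displaymath}
Integrating this identity in $W$ against $\gamma_{d,n+1}$, and then making the change of variables $W' = g^{-1}W$ using the $\mathcal{O}(d)$-invariance of $\gamma_{d,n+1}$, yields $\mu(gB) = \mu(B)$, as desired.

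The only delicate point, and hence the main obstacle in the writeup, is the well-definedness and transformation behaviour of the auxiliary measures $\gamma_{W,n+1,n}$: one has to be careful that the identification of $G(W,n)$ with $G(n+1,n)$ is genuinely canonical modulo $\mathcal{O}(n+1)$, so that the measure $\gamma_{W,n+1,n}$ is intrinsic to $W$ and transforms covariantly under orthogonal maps $W \to W'$. Once this is pinned down, the rest of the argument is a routine application of Haar uniqueness.
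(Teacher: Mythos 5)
Your argument is correct and follows the same route the paper indicates: show that the right-hand side of \eqref{form109} defines an $\mathcal{O}(d)$-invariant Radon probability measure on $G(d,n)$ and invoke the uniqueness of such a measure. The paper only sketches this (referring to \cite[Lemma 3.13]{zbMATH01249699}), whereas you have correctly identified and filled in the one genuinely delicate point, namely that $\gamma_{W,n+1,n}$ is well-defined independently of the choice of orthogonal identification $W \cong \R^{n+1}$ and transforms equivariantly under orthogonal maps $W \to g^{-1}W$.
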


\begin{proof} This is the same argument as in \cite[Lemma 3.13]{zbMATH01249699}: one simply checks that both sides of \eqref{form109} define $\mathcal{O}(d)$-invariant probability measures on $\gamma_{d,n}$, and then appeals to the uniqueness of such measures. \end{proof}

We record one final auxiliary result: 

\begin{lemma}\label{MOLemma} For all $0 < n < d$, $\delta > 0$, there exists an "angle" $\alpha = \alpha(d,\delta) > 0$ such that the following holds. If $z \in \R^{d}$, and $V \in G(d,n)$ satisfy $|\pi_{V}(z)| \leq \alpha |z|$, then there exists a plane $V' \in G(d,n)$ with $d(V,V') < \delta$ such that $\pi_{V'}(z) = 0$. \end{lemma}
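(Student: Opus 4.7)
The plan is to use the alternative metric $\bar d$ on $G(d,n)$: it suffices to produce $V'$ with $\bar d(V, V') \lesssim \alpha$ and $z \perp V'$, and then invoke the equivalence $d \sim \bar d$ to absorb the constant into the choice of $\alpha = \alpha(d,\delta)$.

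First I would scale so that $|z| = 1$, turning the hypothesis into $|\pi_V(z)| \leq \alpha$. The target condition $\pi_{V'}(z) = 0$ is simply $V' \subset z^{\perp}$, so the game is to perturb $V$ slightly into the hyperplane $W := z^{\perp}$. The natural candidate is to pick an orthonormal basis $\{e_1, \ldots, e_n\}$ of $V$ and set
\begin{displaymath} e_j' := e_j - \langle e_j, z\rangle z, \qquad 1 \leq j \leq n. \end{displaymath}
By construction $e_j' \in W$, and $|e_j - e_j'|^2 = |\langle e_j, z\rangle|^2$. Summing over $j$,
\begin{displaymath} \sum_{j=1}^{n} |e_j - e_j'|^2 = \sum_{j=1}^{n} |\langle e_j, z\rangle|^2 = |\pi_V(z)|^2 \leq \alpha^2. \end{displaymath}

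For $\alpha$ small enough in terms of $d$, this forces the Gram matrix of $\{e_j'\}$ to be a small perturbation of the identity, so the vectors $e_1', \ldots, e_n'$ are linearly independent and span an $n$-dimensional subspace $V' \subset W$. Thus $\pi_{V'}(z) = 0$. To estimate the distance, take any unit vector $v_1 = \sum \beta_j e_j \in V$ and set $v_2 := \sum \beta_j e_j' \in V'$; by Cauchy--Schwarz,
\begin{displaymath} |v_1 - v_2| \leq \Bigl(\sum |\beta_j|^2\Bigr)^{1/2} \Bigl(\sum |e_j - e_j'|^2\Bigr)^{1/2} \leq \alpha. \end{displaymath}
Hence $\dist(v_1, V') \leq \alpha$ for every unit $v_1 \in V$, i.e., $\bar d(V, V') \leq \alpha$, and so $d(V,V') \lesssim_d \alpha$ by the equivalence recorded before Lemma \ref{GrLemma}. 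Choosing $\alpha = \alpha(d,\delta)$ small enough makes this $< \delta$, as required.

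There is essentially no obstacle here: the only point requiring a touch of care is ensuring that $\{e_j'\}$ remain linearly independent, which is a quantitative linear algebra fact that holds once $\alpha$ is smaller than a dimensional constant. Everything else is a direct computation with the orthogonal decomposition $\R^d = \R z \oplus z^{\perp}$.
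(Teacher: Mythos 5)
Your argument is correct and self-contained. The paper does not actually prove Lemma~\ref{MOLemma}; it merely observes that the claim is established at the start of the proof of \cite[Lemma~A.1]{MR3790063}. Your construction --- projecting an orthonormal basis $\{e_j\}$ of $V$ onto $z^{\perp}$ to get $e_j' = e_j - \langle e_j, z\rangle z$, noting that $\sum_j |e_j - e_j'|^2 = |\pi_V(z)|^2 \leq \alpha^2$, and controlling $\bar d(V,V')$ via Cauchy--Schwarz --- is exactly the natural way to do this, and is likely what the cited lemma does as well. Two small points worth making explicit if you write this up: the Gram matrix of $\{e_j'\}$ is precisely $I - aa^{T}$ with $a = (\langle e_1,z\rangle,\ldots,\langle e_n,z\rangle)$, so $\det = 1 - |\pi_V(z)|^2 \geq 1 - \alpha^2 > 0$ for $\alpha < 1$, giving linear independence with an explicit threshold rather than a vague ``small enough'' appeal; and you should note that the hypothesis $n < d$ is what guarantees $z^{\perp}$ has room for an $n$-dimensional subspace (and that the case $z = 0$ is trivial, with $V' = V$).
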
 

\begin{proof} The proof of \cite[Lemma A.1]{MR3790063} begins by establishing exactly this claim, although the statement of \cite[Lemma A.1]{MR3790063} does not mention it explicitly.  \end{proof}

\section{Dyadic reformulations}

\subsection{Dyadic cubes} It is known (see for example \cite[\S 2]{MR1132876}) that an $n$-regular set $E \subset \R^{d}$ supports a system $\mathcal{D}$ of "dyadic cubes", that is, a collection of subset of $E$ with the following properties. First, $\calD$ can be written as a disjoint union
\begin{displaymath} \calD = \bigcup_{j \in \Z} \calD_{j}, \end{displaymath}
where the elements $Q \in \calD_{j}$ are referred to as \emph{cubes of side-length $2^{-j}$}. For $j \in \Z$ fixed, the sets of $\calD_{j}$ are disjoint and cover $E$. For $Q \in \calD_{j}$, one writes $\ell(Q) := 2^{-j}$. The side-length $\ell(Q)$ is related to the geometry of $Q \in \calD_{j}$ in the following way: there are constants $0 < c < C < \infty$, and points $c_{Q} \in Q \subset E$ (known as the "centres" of $Q \in \calD$) with the properties
\begin{displaymath} B(c_{Q},c\ell(Q)) \cap E \subset Q \subset B(c_{Q},C\ell(Q)). \end{displaymath}
In particular, it follows from the $n$-regularity of $E$ that $\mu(Q) \sim \ell(Q)^{n}$ for all $Q \in \calD$. The balls $B(c_{Q},C\ell(Q))$ containing $Q$ are so useful that they will have an abbreviation:
\begin{displaymath} B_{Q} := B(c_{Q},C\ell(Q)). \end{displaymath}
If we choose the constant $C \geq 1$ is large enough, as we do, the balls $B_{Q}$ have the property
\begin{displaymath} Q \subset Q' \quad \Longrightarrow \quad B_{Q} \subset B_{Q'}. \end{displaymath}
The "dyadic" structure of the cubes in $\calD$ is encapsulated by the following properties: 
\begin{itemize}
\item For all $Q,Q' \in \calD$, either $Q \subset Q'$, or $Q' \subset Q$, or $Q \cap Q' = \emptyset$. 
\item Every $Q \in \calD_{j}$ has as \emph{parent} $\hat{Q} \in \calD_{j - 1}$ with $Q \subset \hat{Q}$.
\end{itemize}
If $Q \in \calD_{j}$, the cubes in $\calD_{j + 1}$ whose parent is $Q$ are known as the \emph{children of $Q$}, denoted $\mathbf{ch}(Q)$. The \emph{ancestry} of $Q$ consists of all the cubes in $\calD$ containing $Q$.

A small technicality arises if $\diam(E) < \infty$: then the collections $\calD_{j}$ are declared empty for all $j < j_{0}$, and $\calD_{j_{0}}$ contains a unique element, known as the \emph{top cube} of $\calD$. All of the statements above hold in this scenario, except that the top cube has no parents.  

\subsection{Dyadic reformulations of PBP and WGL} Let us next reformulate some of the conditions familiar from the introduction in terms of a fixed dyadic system $\calD$ on $E$. 
\begin{definition}[PBP] An $n$-regular set $E \subset \R^{d}$ has PBP if there exists $\delta > 0$ such that the following holds. For all $Q \in \calD$, there exists a ball $S_{Q} \subset G(d,n)$ of radius $\mathrm{rad}(S_{Q}) \geq \delta$ such that
\begin{displaymath} \calH^{n}(\pi_{V}(E \cap B_{Q})) \geq \delta \mu(Q), \qquad V \in S_{Q}. \end{displaymath}
\end{definition}
It is easy to see that the dyadic PBP is equivalent to the continuous PBP: in particular, the dyadic PBP follows by applying the continuous PBP to the ball $B_{Q} = B(c_{Q},C\ell(Q))$ centred at $c_{Q} \in E$. Only the dyadic PBP will be used below.
\begin{definition}[WGL] An $n$-regular set $E \subset \R^{d}$ satisfies the WGL if for all $\epsilon > 0$, there exists a constant $C(\epsilon) > 0$ such that the following holds:
\begin{displaymath} \mathop{\sum_{Q \in \calD(Q_{0})}}_{\beta(Q) \geq \epsilon} \mu(Q) \leq C(\epsilon)\mu(Q_{0}), \qquad Q_{0} \in \calD. \end{displaymath}
Here $\mu := \calH^{n}|_{E}$, $\beta(Q) := \beta(B_{Q})$, and $\calD(Q_{0}) := \{Q \in \calD : Q \subset Q_{0}\}$. \end{definition}
It is well-known, but takes a little more work to show, that the dyadic WGL is equivalent to the continuous WGL; this fact is stated without proof in numerous references, for example \cite[(2.17)]{MR1132876}. I also leave the checking to the reader. 

One often wishes to decompose $\calD$, or subsets thereof, into \emph{trees}:
\begin{definition}[Trees] Let $E \subset \R^{d}$ be an $n$-regular set with associated dyadic system $\calD$. A collection $\mathcal{T} \subset \mathcal{D}$ is called a \emph{tree} if the following conditions are met:
\begin{itemize}
\item $\mathcal{T}$ has a \emph{top cube} $Q(\mathcal{T}) \in \mathcal{T}$ with the property that $Q \subset Q(\calT)$ for all $Q \in \mathcal{T}$.
\item $\calT$ is \emph{consistent}: if $Q_{1},Q_{3} \in \calT$, $Q_{2} \in \mathcal{D}$, and $Q_{1} \subset Q_{2} \subset Q_{3}$, then $Q_{2} \in \mathcal{T}$.
\item If $Q \in \mathcal{T}$, then either $\mathbf{ch}(Q) \subset \calT$ or $\mathbf{ch}(Q) \cap \mathcal{T} = \emptyset$.
\end{itemize}
The final axiom allows to define the \emph{leaves} of $\calT$ consistently: these are the cubes $Q \in \mathcal{T}$ such that $\mathbf{ch}(Q) \cap \mathcal{T} = \emptyset$. The leaves of $\calT$ are denoted $\mathbf{Leaves}(\mathcal{T})$. The collection $\mathbf{Leaves}(\mathcal{T})$ always consists of disjoint cubes, and it may happen that $\mathbf{Leaves}(\mathcal{T}) = \emptyset$.
\end{definition}

Some trees will be used to prove the following reformulation of the WGL:

\begin{lemma}\label{l:WGL} Let $E \subset \R^{d}$ be an $n$-regular set supporting a collection $\mathcal{D}$ of dyadic cubes. Let $\mu := \calH^{n}|_{E}$. Assume that for all $\epsilon > 0$, there exists $N = N(\epsilon) \in \N$ such that the following holds:
\begin{equation}\label{form80} \mu(\{x \in Q : \card\{Q' \in \calD : x \in Q' \subset Q \text{ and } \beta(Q') \geq \epsilon\} \geq N\}) \leq \tfrac{1}{2}\mu(Q), \qquad Q \in \calD. \end{equation}
Then $E$ satisfies the WGL.
\end{lemma}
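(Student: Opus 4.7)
The plan is to iterate the ``halving'' estimate \eqref{form80} into exponential decay on the number of bad ancestors, and then conclude by a layer-cake computation. Writing
\[
\mathcal{B}(x,Q_0):=\{Q'\in\calD: x\in Q'\subset Q_0,\ \beta(Q')\geq\epsilon\},
\]
a Fubini/double-counting application yields
\[
\sum_{\substack{Q\in\calD(Q_0)\\ \beta(Q)\geq\epsilon}}\mu(Q) \;=\; \int_{Q_0}\card\mathcal{B}(x,Q_0)\,d\mu(x),
\]
so the dyadic WGL reduces to bounding the right-hand side by $C(\epsilon)\mu(Q_0)$. Fixing $\epsilon>0$, let $N=N(\epsilon)$ be as in \eqref{form80}, and set $A_k(Q_0):=\{x\in Q_0:\card\mathcal{B}(x,Q_0)\geq kN\}$.

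The core step is to prove by induction on $k$ that
\[
\mu(A_k(Q_0))\leq 2^{-k}\mu(Q_0)\qquad\text{for every }Q_0\in\calD\text{ and every }k\geq 0,
\]
with the base case $k=1$ being the hypothesis \eqref{form80} itself. For the inductive step, I would run a stopping-time construction inside $A_1(Q_0)$: since $\mathcal{B}(x,Q_0)$ is a nested chain, for every $x\in A_1(Q_0)$ there is a well-defined $N$-th largest element $R(x)\in\mathcal{B}(x,Q_0)$. Letting $\{R_j\}$ be the maximal cubes among $\{R(x):x\in A_1(Q_0)\}$ produces a pairwise disjoint family contained in $Q_0$. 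The point I expect to be the technical heart of the argument is the stronger assertion that these $R_j$ in fact tile $A_1(Q_0)$ exactly: any $y\in R_j$ inherits the same $N$ bad ancestors of $R_j$ inside $Q_0$, so $y\in A_1(Q_0)$; and conversely every $x\in A_1(Q_0)$ lies inside some maximal $R_j\supseteq R(x)$. This upgrade from $\bigsqcup_j R_j\subseteq Q_0$ to $\bigsqcup_j R_j = A_1(Q_0)$ is precisely what gains the extra factor $\tfrac12$ needed for the induction to close.

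Given the tiling, for $y\in R_j$ the chain of bad cubes of $y$ inside $Q_0$ splits into the $N$ fixed ``outer'' ones ($R_j$ together with its $N-1$ bad strict ancestors up to $Q_0$) and the bad cubes of $y$ inside $R_j$, which gives
\[
\card\mathcal{B}(y,Q_0)=N+\card\mathcal{B}(y,R_j)-1
\]
(the $-1$ reflects that $R_j$ is counted on both sides). Hence $y\in A_k(Q_0)\cap R_j$ forces $\card\mathcal{B}(y,R_j)\geq (k-1)N+1$, so $y\in A_{k-1}(R_j)$, and the induction hypothesis applied to $R_j$ gives $\mu(A_k(Q_0)\cap R_j)\leq 2^{-(k-1)}\mu(R_j)$. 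Summing over $j$ and using $\bigsqcup_j R_j=A_1(Q_0)$ together with the base case produces $\mu(A_k(Q_0))\leq 2^{-(k-1)}\mu(A_1(Q_0))\leq 2^{-k}\mu(Q_0)$, closing the induction.

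Finally, the layer-cake identity together with the decay estimate yields
\[
\int_{Q_0}\card\mathcal{B}(x,Q_0)\,d\mu(x)=\sum_{j\geq 1}\mu(\{\card\mathcal{B}(\cdot,Q_0)\geq j\})\leq N\sum_{k\geq 0}\mu(A_k(Q_0))\leq 2N\mu(Q_0),
\]
so the dyadic WGL holds with $C(\epsilon)\lesssim N(\epsilon)$; the continuous WGL then follows from the equivalence noted earlier. Apart from the delicate verification that the stopping cubes $R_j$ tile $A_1(Q_0)$ from inside, everything else is bookkeeping.
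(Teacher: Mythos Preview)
Your proof is correct and follows essentially the same route as the paper: both arguments run the identical stopping-time construction (your cubes $R_j$ coincide with the leaves of the paper's first tree $\calT_0$), use the hypothesis \eqref{form80} to halve the mass at each stage, and arrive at the same bound $2N\mu(Q_0)$. The only cosmetic difference is packaging: the paper decomposes $\calD(Q_0)$ into trees $\calT_j$ in one pass and sums $\sum_j \mu(Q(\calT_j)) \leq 2\mu(Q_0)$ via disjointness of the sets $Q(\calT_j)\setminus\cup\mathbf{Leaves}(\calT_j)$, whereas you phrase the same geometric decay as the inductive estimate $\mu(A_k)\leq 2^{-k}\mu(Q_0)$ followed by a layer-cake sum.
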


\begin{remark} Chebyshev's inequality applied to the set $\{x \in Q : \sum_{Q' \subset Q, \beta(Q') \geq \epsilon} \mathbf{1}_{Q'}(x) \geq N\}$ shows that the WGL implies \eqref{form80}. Therefore \eqref{form80} is equivalent to the WGL. \end{remark}

\begin{proof}[Proof of Lemma \ref{l:WGL}] Fix $Q_{0} \in \mathcal{D}$ and $\epsilon > 0$. We will show that
\begin{equation}\label{form81} \mathop{\sum_{Q \subset Q_{0}}}_{\beta(Q) \geq \epsilon} \mu(Q) \leq 2N\mu(Q_{0}). \end{equation}
Abbreviate $\calD := \{Q \in \calD : Q \subset Q_{0}\}$, and decompose $\calD$ into trees by the following simple stopping rule. The first tree $\calT_{0}$ has top $Q(\calT_{0}) = Q_{0}$, and its leaves are the maximal cubes $Q \in \calD$ (if any should exist) such that
\begin{displaymath} \card\{Q' \in \calD : Q \subset Q' \subset Q_{0} \text{ and } \beta(Q') \geq \epsilon\} = N. \end{displaymath}
Here $N = N(\epsilon) \geq 1$, as in \eqref{form80}. All the children of previous generation leaves are declared to be new top cubes, under which new trees are constructed by the same stopping condition. Let $\calT_{0},\calT_{1},\ldots$ be the trees obtained by this process, with top cubes $Q_{0},Q_{1},\ldots$ Note that $\calD = \bigcup_{j \geq 0} \calT_{j}$, and
\begin{displaymath} \card\{Q \in \calT_{j} : x \in Q \text{ and } \beta(Q) \geq \epsilon\} \leq N, \qquad x \in Q_{j}. \end{displaymath} Further, \eqref{form80} implies that
\begin{displaymath} \mu(\cup \mathbf{Leaves}(\calT_{j})) \leq \tfrac{1}{2} \mu(Q_{j}), \qquad j \geq 0. \end{displaymath} 
On the other hand, the sets $E_{j} := Q_{j} \, \setminus \, \cup \mathbf{Leaves}(\calT_{j})$ are disjoint. Now, we may estimate as follows:
\begin{align*} \mathop{\sum_{Q \subset Q_{0}}}_{\beta(Q) \geq \epsilon} \mu(Q) = \sum_{j = 0}^{\infty} \int_{Q_{j}} \mathop{\sum_{Q \in \calT_{j}}}_{\beta(Q) \geq \epsilon} \mathbf{1}_{Q}(x) \, dx \leq N \sum_{j = 0}^{\infty} \mu(Q_{j}) \leq 2N \sum_{j = 0}^{\infty} \mu(E_{j}) \leq 2N\mu(Q_{0}). \end{align*} 
This completes the proof of \eqref{form81}.  \end{proof}

By Theorem \ref{t:DS}, the PBP condition together with the WGL implies BPLG, and the condition in Lemma \ref{l:WGL} is a reformulation of the WGL. Therefore, our main result, Theorem \ref{main}, will be a consequence of the next proposition:

\begin{proposition}\label{mainProp} Assume that $E \subset \R^{d}$ is an $n$-regular set with \textup{PBP}. Then, for every $\epsilon > 0$, there exists $N \geq 1$, depending on $d$, $\epsilon$, and the $n$-regularity and \textup{PBP} constants of $E$, such that the following holds. The sets
\begin{displaymath} E_{Q} := E_{Q}(N,\epsilon) := \left\{x \in Q : \card \{Q' \in \calD : x \in Q' \subset Q \text{ and } \beta(Q') \geq \epsilon\} \geq N \right\} \end{displaymath}
satisfy $\mu(E_{Q}) \leq \tfrac{1}{2}\mu(Q)$ for all $Q \in \mathcal{D}$.
\end{proposition}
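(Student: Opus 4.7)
The plan is to proceed by contradiction: fix $\epsilon > 0$ and a cube $Q \in \calD$, assume $\mu(E_Q) > \tfrac{1}{2}\mu(Q)$, and show that for $N$ large (depending on $d, C_0, \delta, \epsilon$) this forces $\calH^n(\pi_V(E \cap B_Q)) < \delta \mu(Q)$ for some $V$ in the PBP ball $S_Q$ of $Q$, contradicting the PBP hypothesis applied to $Q$ itself. For each $x \in E_Q$ I would first extract a nested chain $Q_1(x) \subsetneq \cdots \subsetneq Q_N(x) \subset Q$ of $\beta$-large cubes through $x$, and, using $n$-regularity, thin it so that consecutive side-lengths differ by at least some factor $\kappa = \kappa(\epsilon,C_0) > 1$; this decouples the scales.

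The per-scale mechanism is to trade $\beta$-largeness for projection multiplicity. At each $Q' = Q_j(x)$, the bound $\beta(Q') \geq \epsilon$ together with $n$-regularity should produce two $\mu$-substantial subsets $A_1, A_2 \subset E \cap B_{Q'}$ lying in parallel $n$-dimensional slabs separated in some unit normal direction $\nu_{Q'}$ by a distance $\gtrsim_\epsilon \ell(Q')$. By Lemma \ref{MOLemma}, for every plane $V$ in a Grassmannian ball $B_{G(d,n)}(V'_{Q'}, \alpha)$ with $\alpha = \alpha(d,\epsilon) > 0$, the projection $\pi_V$ sends $A_1$ and $A_2$ to nearly coincident subsets of $V$; this creates $\pi_V$-multiplicity at least two on a fraction $\gtrsim_\epsilon 1$ of $\mu(B_{Q'})$.

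Next I would integrate these per-scale statements over $V \in S_Q$ against the Haar measure $\gamma_{d,n}$, using the Grassmannian Fubini Lemma \ref{l:fubini} to piece together the varying Grassmannian balls $B_{G(d,n)}(V'_{Q_j(x)}, \alpha)$ appearing at different scales and points. After a double pigeonhole over $x \in E_Q$ and the $N$ scale indices, the desired conclusion is that, for a $\gamma_{d,n}$-substantial subset of $V \in S_Q$, the average $\pi_V$-fiber cardinality over points of $E \cap B_Q$ grows with $N$. The elementary bound
\begin{displaymath} \calH^n(\pi_V(E \cap B_Q)) \leq \int_{E \cap B_Q} \frac{d\mu(z)}{\#\{z' \in E \cap B_Q : \pi_V(z') = \pi_V(z)\}} \lesssim \frac{\mu(Q)}{M(N)}, \end{displaymath}
with $M(N) \to \infty$ as $N \to \infty$, then contradicts PBP once $N$ is taken sufficiently large.

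The main obstacle is the aggregation step: the normals $\nu_{Q_j(x)}$ at different scales need not be related, so a single $V$ cannot be expected a priori to witness multiplicity at every one of the $N$ scales, and per-scale multiplicities might be redundant rather than compounding. I expect the correct way to overcome this is to replace the informal multiplicity bookkeeping above by an integrated $L^2$ estimate on the pushforwards $\pi_{V\sharp}\mu$, in the spirit of the $L^2$-based characterisation of BPLG from \cite{MR3790063} (Proposition \ref{MOProp}): one assembles the per-scale lower bounds coming from $\beta \geq \epsilon$ into an $L^2$ lower bound for $\pi_{V\sharp}\mu$ that grows with $N$, and then contrasts this with the absolute support bound $\pi_V(E \cap B_Q) \subset B(\pi_V(c_Q), C\ell(Q))$ forced by PBP. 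Making this exchange quantitative, uniformly across the chain scales and the Grassmannian ball $S_Q$, is the technical heart of the argument.
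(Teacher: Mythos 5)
Your proposal diverges from the paper at the architectural level: you attempt to contradict PBP applied \emph{only at the fixed cube $Q$}, whereas the paper never does this. Starting from the same counter-assumption, the paper instead introduces a scale-invariant ``width'' coefficient $\w(Q)$ (Definition \ref{width}), proves the Carleson packing bound $\w(\calD(Q_{0}))\lesssim\mu(Q_{0})$ via Eilenberg's inequality (Proposition \ref{widthCarleson}), constructs from $\mu(E_{Q_{0}})\geq\tfrac{1}{2}\mu(Q_{0})$ a family of disjoint ``heavy trees'' inside $Q_{0}$ whose tops have total mass $\gtrsim K\mu(Q_{0})$, shows each heavy tree contributes $\gtrsim\mu(Q(\calT_{j}))$ to $\w$ (Proposition \ref{treeProp}), and takes $K$ large to overflow the Carleson bound. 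Crucially, the positive-width criterion (Proposition \ref{prop2}) applies PBP at the \emph{leaves} of each heavy tree, i.e., at many scales far below $Q_{0}$, not at $Q_{0}$ alone; the author records explicitly (end of Section 1.2, Question \ref{q1}) that whether a single-scale use of PBP --- the strategy you propose --- can work is open.

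The aggregation issue you flag is therefore the genuine obstruction, not a technicality to be patched. The transversal normals $\nu_{Q'}$ along a chain are not constrained by PBP at $Q$; after pigeonholing over $V\in S_Q$, the number of chain scales whose normal aligns with a fixed $V^{\perp}$ need not grow in $N$, and --- more decisively --- even where alignment occurs, the portion of $V$ with multiplicity $\geq 2$ coming from scale $Q'$ has $\calH^{n}$-measure only $\sim\ell(Q')^{n}\ll\ell(Q)^{n}$, and these contributions at well-separated scales can be pairwise disjoint in $V$, giving $O(1)$ average fiber cardinality. (By Eilenberg, the average fiber count over $\pi_{V}(E\cap B_{Q})$ is $\lesssim\mu(Q)/\calH^{n}(\pi_{V}(E\cap B_{Q}))\lesssim 1/\delta$ under PBP; you would need to show it is $\gg 1/\delta$.) The proposed escape via a growing $L^{2}$ lower bound for $\pi_{V\sharp}\mu$ runs into exactly the same wall: deriving such a bound from $\beta$-largeness without applying PBP below scale $Q$ is the unresolved content of Question \ref{q1}. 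Also, Proposition \ref{MOProp} is the cone criterion from \cite{MR3790063}, not the $L^{2}$ characterisation; the paper uses it inside Proposition \ref{prop1} to pass from many big $\beta$-numbers to heavy cones, a quite different role. The paper's resolution is to count overlaps of the projected \emph{leaf} balls $\pi_{V}(B_{Q})$ over $Q$ in a tree (Proposition \ref{prop2}), where PBP at each leaf supplies $|E_{B}|\gtrsim\delta|B|$, and then convert those overlaps into width rather than directly into a projection-measure contradiction.
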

Proving this proposition will occupy the rest of the paper.

\section{Construction of heavy trees}\label{s:trees}

The proof of Proposition \ref{mainProp} proceeds by counter assumption: there exists a cube $Q_{0} \in \mathcal{D}$, a small number $\epsilon > 0$, and a large number $N \geq 1$ of the form $N = KM$, where also $K,M \geq 1$ are large numbers, with the property 
\begin{equation}\label{form83} \mu(E_{Q_{0}}) \geq \tfrac{1}{2}\mu(Q_{0}). \end{equation}
This will lead to a contradiction if both $K$ and $M$ are large enough, depending on $d$, $\epsilon$, and the $n$-regularity and PBP constants of $E$. Precisely, $M \geq 1$ gets chosen first within the proof of Proposition \ref{treeProp}. The parameter $K \geq 1$ is chosen second, and depends also on $M$. For the details, see the proof of Proposition \ref{mainProp}, which can be found around \eqref{form55}.

From now on, we will restrict attention to sub-cubes of $Q_{0}$, and we abbreviate $\mathcal{D} := \mathcal{D}(Q_{0})$. We begin by using \eqref{form83}, and the definition of $E_{Q_{0}}$, to construct a number of \emph{heavy trees} $\calT_{0},\calT_{1},\ldots \subset \mathcal{D}$ with the following properties:
\begin{enumerate}
\item[(T1) \phantomsection\label{T1}] $\mu(E_{Q_{0}} \cap Q(\calT_{j})) \geq \tfrac{1}{4}\mu(Q(\calT_{j}))$ for all $j \geq 0$.
\item[(T2) \phantomsection\label{T2}] $E_{Q_{0}} \cap Q(\calT_{j}) \subset \cup \mathbf{Leaves}(\calT_{j})$ for all $j \geq 0$.
\item[(T3) \phantomsection\label{T3}] For every $j \geq 0$ and $Q \in \mathbf{Leaves}(\calT_{j})$ it holds
\begin{displaymath} \card\{Q' \in \calT_{j} : Q \subset Q' \subset Q(\calT_{j}) \text{ and } \beta(Q') \geq \epsilon\} = M. \end{displaymath}
\item[(T4) \phantomsection\label{T4}] The top cubes satisfy $\sum_{j} \mu(Q(\calT_{j})) \geq \tfrac{K}{4} \mu(Q_{0})$.
\end{enumerate}

Before constructing the trees with properties \nref{T1}-\nref{T4}, let us use them, combined with some auxiliary results, to complete the proof of Proposition \ref{mainProp}. The first ingredient is the following proposition:

\begin{proposition}\label{treeProp} If the parameter $M \geq 1$ is large enough, depending only on $d$, $\epsilon$, and the $n$-regularity and \textup{PBP} constants of $E$, then $\w(\calT_{j}) \geq \tau \mu(Q(\calT_{j}))$, where $\tau > 0$ depends only on $d$, and the $n$-regularity and \textup{PBP} constants of $E$.
\end{proposition}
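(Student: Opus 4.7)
The quantity $\w(\calT_j)$ is not explicitly defined in the excerpt I have in hand, but by analogy with \cite{MR3790063} and given the PBP setting, I would expect it to be a (possibly $L^2$-weighted) average of $\calH^{n}(\pi_V(\cup \mathbf{Leaves}(\calT_j)))$ over $V$ in a neighbourhood of the PBP ball $S_{Q(\calT_j)}$. I will plan on this interpretation: the claim to prove is really that the projections of the leaves cover, on average over PBP directions, an area comparable to $\mu(Q(\calT_j))$.

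The opening move is bookkeeping: by \nref{T1} and \nref{T2}, $\mu(\cup \mathbf{Leaves}(\calT_j)) \geq \tfrac{1}{4}\mu(Q(\calT_j))$, while PBP at the top cube delivers $\calH^{n}(\pi_V(E \cap B_{Q(\calT_j)})) \geq \delta \mu(Q(\calT_j))$ for every $V \in S_{Q(\calT_j)}$. The loss when passing from $E \cap B_{Q(\calT_j)}$ to the union of leaves, and the further loss when the projection collapses separated leaves onto the same target, are the only two enemies; both have to be tamed using the $\beta$-information provided by \nref{T3}.

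The central geometric mechanism I would try to exploit is the following. For an ancestor $Q' \in \calT_j$ with $\beta(Q') \geq \epsilon$, the measure $\mu|_{Q'}$ is quantitatively non-planar: its projection $\pi_{V\sharp}\mu|_{Q'}$ cannot be supported within any thin neighbourhood of an affine $(n-1)$-subspace of $V$ without violating $\beta(Q') \geq \epsilon$. Thus, whenever the projections of two descendant leaves of $Q'$ collide heavily in direction $V$, Lemma \ref{MOLemma} lets one perturb $V$ to a nearby $V'$ where that particular collision vanishes, confining the set of "collision-prone" directions inside $S_{Q(\calT_j)}$ to a thin neighbourhood of a hyperplane-type condition on the Grassmannian. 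Iterating this across the $M$ generations of $\beta \geq \epsilon$ ancestors guaranteed by \nref{T3}, and using the Fubini-type Lemma \ref{l:fubini} to slice $G(d,n)$ against the successive hyperplane conditions, the $\gamma_{d,n}$-measure of the persistently bad directions in $S_{Q(\calT_j)}$ should shrink by a fixed factor at each generation, and after $M$ generations occupy a negligible fraction of $\gamma_{d,n}(S_{Q(\calT_j)})$. On the complementary set of $V$, the PBP lower bound transfers essentially intact to the leaves, yielding $\w(\calT_j) \gtrsim \mu(Q(\calT_j))$.

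The main obstacle, as I see it, is quantitative: making each $\beta \geq \epsilon$ scale produce a uniform multiplicative reduction in the bad-direction set, independent of $M$ itself and of which ancestors in $\calT_j$ the $\beta$-large scales happen to sit at. The reductions at different scales must also combine without destructive interference, so that choosing $M = M(d,\epsilon,C_0,\delta)$ large enough lets the product of reductions beat the universal constants coming from Lemma \ref{MOLemma}, PBP, and $n$-regularity. Isolating a clean "one-step" lemma of the form \emph{one $\beta$-large generation multiplies the bad-direction Grassmannian measure by a factor $\leq 1 - c(d,\epsilon,C_0,\delta)$} is what I would aim for first; everything else in the proof of Proposition \ref{treeProp} would then be iteration and integration against $\gamma_{d,n}$.
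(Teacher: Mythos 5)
Your proposal starts from a misreading of what $\w(\calT_{j})$ is, and this propagates into the rest of the strategy. Width is defined (Definition \ref{width}) via slice diameters: $\w(Q)$ is an average, over $V \in G(d,n)$ and $w \in V$, of $\diam\bigl(B_{Q} \cap E \cap \pi_{V}^{-1}\{w\}\bigr)/\ell(Q)$, normalized by $\mu(Q)$, and $\w(\calT_{j}) = \sum_{Q \in \calT_{j}} \w(Q)\mu(Q)$. This is \emph{not} a measure of projected leaves; it is large precisely when the fibers $\pi_{V}^{-1}\{w\}$ meet $E$ in large pieces, i.e.\ when the projection is far from injective. So the goal of Proposition \ref{treeProp} is orthogonal to ``the projections of the leaves cover a big area'': one needs to show that many fibers hit $E$ near \emph{several} distinct leaves at comparable scales, which is quite a different thing. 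The crucial point you then miss is that the Carleson condition in Proposition \ref{widthCarleson} bounds $\w(\calD(Q_{0}))$ from above, so the strategy (Section \ref{s:trees}) is to get a contradiction by finding $\gtrsim K$ heavy trees each with $\w(\calT_{j}) \gtrsim \mu(Q(\calT_{j}))$; your reading would have made the Carleson bound irrelevant.

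Beyond the definition, the iteration mechanism you propose does not match the paper and is not clearly correct as stated. Your ``one-step'' lemma --- that a single $\beta \geq \epsilon$ generation multiplies the Grassmannian measure of collision-prone directions by a fixed factor $<1$ --- is unsupported: the $\beta$-number of an ancestor $Q'$ constrains the geometry of $E \cap B_{Q'}$, but gives no direct quantitative control on how the much smaller, arbitrarily deep leaves of $\calT_{j}$ project, nor does Lemma \ref{MOLemma} remove a collision in a measure-positive way per generation in any obvious sense. The actual proof has a different architecture: (i) thicken the leaves by $n$-discs $D_{Q}$ parallel to $V_{0}$, forming the $n$-regular set $E_{+} = E \cup E_{D}$, so that bilipschitzness of $\pi_{V}|_{D_{Q}}$ converts projection multiplicity into genuine slice structure; (ii) apply Proposition \ref{prop1}, which runs a \emph{contrapositive} Lipschitz-graph argument via Proposition \ref{MOProp} and Dorronsoro's strong geometric lemma to turn $\gtrsim M$ big-$\beta$ scales into $\geq H$ scales at which the cone $X(x,V_{0},\alpha,2^{-j-1},2^{-j})$ meets $E_{D}$; (iii) run a Besicovitch--Federer type slicing argument in two cases (depending on whether $\gamma_{d,n}(\{V: V^{\perp}\cap E_{j,x}\neq\emptyset\})$ is small or not), using the $G(d,n+1)$-fibration $G(W,n)$ and Lemma \ref{l:fubini} to get a set $S_{x}$ with $\gamma_{d,n}(S_{x}) \gtrsim 1/\sqrt{H}$ where $\mathcal{M}f_{V}(\pi_{V}(x)) \gtrsim \sqrt{H}$; and (iv) feed this into Proposition \ref{prop2}, itself resting on Lemma \ref{lemma1}, to derive $\w(\calT_{j}) \gtrsim \mu(Q(\calT_{j}))$. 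The role of Lemma \ref{MOLemma} and Lemma \ref{l:fubini} in the real argument is to control slicing geometry in codimension $>1$, not to implement an inductive shrinkage of bad directions. You would need to abandon the iteration scheme and build around the width/slicing formalism instead.
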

Here $\w(\calT_{j}) = \sum_{Q \in \calT_{j}} \w(Q)\mu(Q)$ is a quantity to be properly introduced in Section \ref{s:width}. For now, we only need to know that the coefficients $\w(Q)$ satisfy a Carleson packing condition, depending only on the $n$-regularity constant of $E$:
\begin{displaymath} \w(\calD) := \sum_{Q \subset Q_{0}} \w(Q)\mu(Q) \lesssim \mu(Q_{0}). \end{displaymath}

We may then prove Proposition \ref{mainProp}:

\begin{proof}[Proof of Proposition \ref{mainProp}] Let $N = KM$, where $M \geq 1$ is chosen so large that the hypothesis of Proposition \ref{treeProp} is met: every heavy tree $\calT_{j}$ satisfies $\w(\calT_{j}) \geq \tau \mu(Q(\calT_{j}))$. According to \nref{T4} in the construction of the heavy trees, this implies
\begin{equation}\label{form55} \w(\calD) \geq \sum_{j \geq 0} \w(\calT_{j}) \geq \tau \sum_{j \geq 0} \mu(Q(\calT_{j})) \geq \frac{\tau K}{4}\mu(Q_{0}). \end{equation} 
Now, the lower bound in \eqref{form55} violates the Carleson packing condition for $\w(\calD)$ if the constant $K \geq 1$ is chosen large enough, depending on the admissible parameters. The proof of Proposition \ref{mainProp} is complete. \end{proof}

The rest of this section is spent constructing the heavy trees. We first construct a somewhat larger collection, and then prune it. In fact, the construction of the larger collection is already familiar from the proof of Lemma \ref{l:WGL}, with notational changes: the first tree $\calT_{0}$ has top $Q(\calT_{0}) = Q_{0}$, and its leaves consist of the maximal cubes $Q \in \calD$ with the property that
\begin{equation}\label{form47} \card \{Q' \in \mathcal{D} : Q \subset Q' \subset Q(\calT_{0}) \text{ and } \beta(Q') \geq \epsilon\} = M. \end{equation}
The tree $\calT_{0}$ itself consists of the cubes in $\calD$ which are not strict sub-cubes of some $Q \in \mathbf{Leaves}(\calT_{0})$. It is easy to check that $\calT_{0}$ is a tree. 

Assume then that some trees $\calT_{0},\ldots,\calT_{k}$ have already been constructed. Let $0 \leq j \leq k$ be an index such that for some $Q \in \mathbf{Leaves}(\mathcal{T}_{j})$, at least one cube $Q_{k + 1} \in \mathbf{ch}(Q)$ has not yet been assigned to any tree. The cube $Q_{k + 1}$ then becomes the top cube of a new tree $\mathcal{T}_{k + 1}$, thus $Q_{k + 1} = Q(\mathcal{T}_{k + 1})$. The tree $\mathcal{T}_{k + 1}$ is constructed with the same stopping condition \eqref{form47}, just replacing $Q(\mathcal{T}_{0})$ by $Q_{k + 1} = Q(\mathcal{T}_{k + 1})$. 

Note that if $\mathbf{Leaves}(\calT_{j}) = \emptyset$ for some $j \in \N$, then no further trees will be constructed with top cubes contained in $Q(\calT_{j})$. As a corollary of the stopping condition, we record the uniform upper bound
\begin{equation}\label{form51} \card\{Q \in \calT_{j} : x \in Q \text{ and } \beta(Q) \geq \epsilon\} \leq M, \qquad x \in Q(\calT_{j}), \, j \geq 0. \end{equation} 
We next prune the collection of trees. Let $\mathbf{Top}$ be the collection of all the top cubes $Q(\calT_{j})$ constructed above, and let $\mathbf{Top}_{K} \subset \mathbf{Top}$ be the maximal cubes with the property 
\begin{displaymath} \card\{Q' \in \mathbf{Top} : Q \subset Q' \subset Q_{0}\} = K. \end{displaymath} 
We discard all the trees whose tops are strictly contained in one of the cubes in $\mathbf{Top}_{K}$, and we re-index the remaining trees as $\calT_{0},\calT_{1},\calT_{2},\ldots$ Thus, the remaining trees are the ones whose top cube contains some element of $\mathbf{Top}_{K}$. We record that
\begin{equation}\label{form49} \card\{j \geq 0 : x \in Q(\calT_{j})\} \leq K, \qquad x \in Q_{0}. \end{equation} 
We write $\calT := \cup \calT_{j}$ for brevity. We claim that
\begin{equation}\label{form50} \card\{Q \in \calT : x \in Q \text{ and } \beta(Q) \geq \epsilon\} = N, \qquad x \in E_{Q_{0}}. \end{equation} 
Indeed, fix $x \in E_{Q_{0}}$, and recall that
\begin{equation}\label{form53} \card\{Q \in \calD : x \in Q \text{ and } \beta(Q) \geq \epsilon\} \geq N \end{equation}
by definition. We first claim that $x$ is contained in $\geq K + 1$ cubes in $\mathbf{Top}$. If $x$ was contained in $\leq K$ cubes in $\mathbf{Top}$, then $x$ would be contained in $\leq K - 1$ distinct leaves, and the stopping condition \eqref{form47} would imply that
\begin{equation}\label{form54} \card\{Q \in \calD : x \in Q \text{ and } \beta(Q) \geq \epsilon\} < (K - 1)M + M = N, \end{equation}
contradicting $x \in E_{Q_{0}}$. Therefore, $x$ is indeed contained in $K + 1$ cubes in $\mathbf{Top}$. Let the largest such top cubes be $Q_{0} \supset Q_{1} \supset \ldots \supset Q_{K - 1} \supset Q_{K}$, so $Q_{K - 1} \in \mathbf{Top}_{K}$. Now, it suffices to note that whenever $x \in Q_{j}$, $1 \leq j \leq K$, then $x$ is contained in some element of $\mathbf{Leaves}(\calT_{j - 1})$, which implies by the stopping condition that
\begin{equation}\label{form52} \card \{Q \in \calT_{j - 1} : x \in Q \text{ and } \beta(Q) \geq \epsilon\} = M. \end{equation}
Since $\calT_{j - 1} \subset \calT$ for $1 \leq j \leq K$, the claim \eqref{form50} follows by summing up \eqref{form52} over $1 \leq j \leq K$ and recalling that $KM = N$. 

We next verify that $E_{Q_{0}} \cap Q(\calT_{j}) \subset \cup \mathbf{Leaves}(\calT_{j})$ for all $j \geq 0$, as claimed in property \nref{T2}. Indeed, if $x \in E_{Q_{0}} \cap Q(\calT_{j})$ for some $j \geq 0$, then \eqref{form53} holds, and $Q(\calT_{j})$ is contained in $\leq K$ elements of $\mathbf{Top}$. This means that if $x \in Q(\calT_{j}) \, \setminus \, \cup \mathbf{Leaves}(\calT_{j})$, then $x$ is contained in $\leq K - 1$ distinct leaves, and hence satisfies \eqref{form54}. But this would imply $x \notin E_{Q_{0}}$. Hence $x \in \mathbf{Leaves}(\calT_{j})$, as claimed.

The properties \nref{T2}-\nref{T3} on the list of requirements have now been verified (indeed \nref{T3} holds by the virtue of the stopping condition). For \nref{T1} and \nref{T4}, some further pruning will be needed. First, from \eqref{form50},  \eqref{form51}, and the assumption $\mu(E_{Q_{0}}) \geq \tfrac{1}{2}\mu(Q_{0})$, we infer that
\begin{align*} \frac{N\mu(Q_{0})}{2} & \stackrel{\eqref{form50}}{\leq} \int_{E_{Q_{0}}} \mathop{\sum_{Q \in \calT}}_{\beta(Q) \geq \epsilon} \mathbf{1}_{Q}(x) \, dx\\
& = \sum_{j = 0}^{\infty} \int_{E_{Q_{0}} \cap Q(\calT_{j})} \mathop{\sum_{Q \in \calT_{j}}}_{\beta(Q) \geq \epsilon} \mathbf{1}_{Q}(x) \, dx\\\
& \stackrel{\eqref{form51}}{\leq} M \sum_{j = 0}^{\infty} \mu(E_{Q_{0}} \cap Q(\calT_{j})). \end{align*}
Recalling that $N = KM$, this yields
\begin{displaymath} \sum_{j = 0}^{\infty} \mu(E_{Q_{0}} \cap Q(\calT_{j})) \geq \frac{K\mu(Q_{0})}{2}. \end{displaymath}  
Now, we discard all \emph{light} trees with the property $\mu(E_{Q_{0}} \cap Q(\calT_{j})) < \tfrac{1}{4}\mu(Q(\calT_{j}))$. Then, by the uniform upper bound \eqref{form49}, we have
\begin{displaymath} \sum_{j : \mathcal{T}_{j} \text{ is light}} \mu(E_{Q_{0}} \cap Q(\calT_{j})) \leq \tfrac{1}{4} \sum_{j = 0}^{\infty} \mu(Q(\calT_{j})) \leq \frac{K\mu(Q_{0})}{4}.  \end{displaymath}
Hence, the \emph{heavy trees} with
\begin{displaymath} \mu(E_{Q_{0}} \cap Q(\calT_{j})) \geq \frac{\mu(Q(\calT_{j}))}{4} \end{displaymath}
satisfy
\begin{displaymath} \sum_{j : \mathcal{T}_{j} \text{is heavy}} \mu(Q(\calT_{j})) \geq \frac{K\mu(Q_{0})}{4}. \end{displaymath}
By definition of the heavy trees, the requirements \nref{T1} and \nref{T4} on our list are satisfied (and \nref{T2}-\nref{T3} were not violated by the final pruning, since they are statements about individual trees). After another re-indexing, this completes the construction of the heavy trees $\calT_{0},\calT_{1},\ldots$

We have now proven Proposition \ref{mainProp} modulo Proposition \ref{treeProp}, which concerns an individual heavy tree $\calT_{j}$. Proving Proposition \ref{treeProp} will occupy the rest of the paper.


\section{A criterion for positive width}\label{s:width}

Let $E \subset \R^{d}$ be a closed $n$-regular set, write $\mu := \calH^{n}|_{E}$, and let $\mathcal{D}$ be a system of dyadic cubes on $E$. I next discuss the notion of width, which appeared in the statement of Proposition \ref{treeProp}. Width was first introduced in \cite{MR4127898} in the context of Heisenberg groups, and \cite[\S 8]{MR4127898} contains the relevant definitions adapted to $\R^{n}$, but only in the case $n = d - 1$. I start here with the higher co-dimensional generalisation.
\begin{definition}[Measure on the affine Grassmannian] Fix $0 < m < d$, and let $\mathcal{A} := \mathcal{A}(d,m)$ be the collection of all affine planes of dimension $m$. Define a measure $\lambda := \lambda_{d,m}$ on $\mathcal{A}$ via the relation
\begin{displaymath} \int_{\mathcal{A}} f(V) \, d\lambda(V) := \int_{G(d,d - m)} \int_{V} f(\pi_{V}^{-1}\{w\}) \, d\calH^{d - m}(w) \, d\gamma_{d,d - m}(V), \qquad f \in C_{c}(\mathcal{A}). \end{displaymath}
\end{definition}

The definition above is standard, see \cite[\S3.16]{zbMATH01249699}. We are interested in the case $m = d - n$, since we plan to slice sets by the fibres of projections to planes in $G(d,n)$.

\begin{definition}[Width]\label{width} For $Q \in \mathcal{D}$ and a plane $W \in \mathcal{A}(d,d - n)$, we define
\begin{displaymath} \w_{Q}(E,W) := \diam(B_{Q} \cap E \cap W), \end{displaymath}
where we recall that $B_{Q} = B(c_{Q},C\ell(Q))$ is a ball centred at some point $c_{Q} \in Q \subset E$ containing $Q$. Then, we also define
\begin{align} \w(Q) & := \frac{1}{\mu(Q)} \int_{\mathcal{A}(d,d - n)} \frac{\w_{Q}(E,W)}{\ell(Q)} \, d\lambda_{d,d - n}(W) \notag\\
&\label{d:width} = \frac{1}{\mu(Q)} \int_{G(d,n)} \int_{V} \frac{\w_{Q}(E,\pi_{V}^{-1}\{w\})}{\ell(Q)} \, d\calH^{n}(w) \, d\gamma_{d,n}(V). \end{align} 
Finally, if $\calF \subset \calD$ is an arbitrary collection of dyadic cubes, we set
\begin{equation}\label{s:width2} \w(\mathcal{F}) := \sum_{Q \in \mathcal{F}} \w(Q)\mu(Q). \end{equation}
\end{definition}
The $\mu(Q)$-normalisation in \eqref{d:width} is the right one, because for $V \in G(d,n)$ fixed, it is only possible that $\w_{Q}(E,\pi_{V}^{-1}\{w\}) \neq 0$ if $w \in \pi_{V}(B_{Q}) \subset V$, and $\calH^{n}(\pi_{V}(B_{Q})) \sim \mu(Q)$. As shown in \cite[Theorem 8.8]{MR4127898}, width satisfies a Carleson packing condition. However, the proof in \cite{MR4127898} was restricted to the case $d = n - 1$, and a little graph-theoretic construction is needed in the higher co-dimensional situation. Details follow.
\begin{proposition}\label{widthCarleson} There exists a constant $C \geq 1$, depending only on the $1$-regularity constant of $E$, such that
\begin{equation}\label{wCarleson} \w(\calD(Q_{0})) \leq C\mu(Q_{0}), \qquad Q_{0} \in \calD, \end{equation}
where $\mathcal{D}(Q_{0}) := \{Q \in \mathcal{D} : Q \subset Q_{0}\}$.
\end{proposition}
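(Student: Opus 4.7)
The codimension-1 case ($d - n = 1$) was established in \cite[Theorem 8.8]{MR4127898}; the strategy for higher codimension is to reduce to codimension 1 via Cauchy's integral-geometric inequality. For a bounded set $A \subset \R^{k}$, $\diam(A) \lesssim_{k} \int_{\mathbb{S}^{k-1}} \diam(P_{\theta} A) \, d\theta$, where $P_{\theta}(x) := \langle x, \theta \rangle$ denotes the 1-dimensional projection in direction $\theta$. Applied to $A = B_{Q} \cap E \cap W$ (lying inside the $(d-n)$-plane $W$), this reduces the integrand in $\w(Q) \mu(Q)$ to an average of 1-dim projection diameters. A Fubini-type manipulation on the Grassmannian (in the spirit of Lemma \ref{l:fubini}) re-parameterizes the combined integration over $W \in \calA(d, d-n)$ and $\theta \in \mathbb{S}(W)$ as integration over $\theta \in \mathbb{S}^{d-1}$ and $\tilde{W} \in \calA(\theta^{\perp}, d-n-1)$, related by $W = \R\theta + \tilde{W}$.

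For a.e.\ $(\theta, \tilde{W})$, the 1-dim set $T := P_{\theta}(E \cap B_{Q_{0}} \cap (\R\theta + \tilde{W})) \subset \R$ is at most countable (Marstrand-type slicing for $n$-regular sets, the slice being of expected dimension $0$). Order $T = \{t_{1} < t_{2} < \cdots\}$ and pick measurable preimages $z_{k}$ with $P_{\theta}(z_{k}) = t_{k}$. Then $\diam_{\R}(P_{\theta}(E \cap B_{Q} \cap (\R\theta + \tilde{W}))) = t_{j(Q)} - t_{i(Q)}$ telescopes into $\sum_{k} (t_{k+1} - t_{k}) \1_{i(Q) \leq k < j(Q)}$. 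In codimension 1 the convex segment $B_{Q} \cap W$ forces $\1_{i(Q) \leq k < j(Q)} = \1_{z_{k}, z_{k+1} \in B_{Q}}$, so the standard dyadic lemma $\sum_{Q} \1_{x, y \in B_{Q}}/\ell(Q) \lesssim 1/|x - y|$ (a consequence of $n$-regularity and disjointness of cubes at each scale) closes the argument at $\lesssim \calH^{0}(T)$. The ``little graph-theoretic construction'' referenced in the proposition is then needed to handle higher codimension, where straddling a gap may be witnessed by an index pair $(m, m')$ distinct from $(k, k+1)$: one charges each $Q$ to a canonical straddling pair in $E \cap B_{Q} \cap W$ arranged so that the dyadic lemma still sums to $\lesssim \calH^{0}(T)$ after integration.

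Finally, the Crofton-type identity $\int_{\mathbb{S}^{d-1}} \int_{\calA(\theta^{\perp}, d-n-1)} \calH^{0}(T) \, d\tilde{W} \, d\theta \lesssim \calH^{n}(E \cap B_{Q_{0}}) \sim \mu(Q_{0})$ closes the Carleson packing bound. The main obstacle is the charging step: a naive summation over all index pairs $(m, m')$ with $m \leq k < m'$ would yield an $O(\calH^{0}(T)^{2})$ bound which is too weak, so keeping the estimate linear in $\calH^{0}(T)$ requires an adaptive choice of preimages or a spanning-tree-like construction on $E \cap B_{Q_{0}} \cap (\R\theta + \tilde{W})$. By the proposition's ``little'' qualifier I expect this reduces to a clever book-keeping argument rather than deep new machinery, but it is where the real work of the proposition lies.
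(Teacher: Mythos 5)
Your setup is a genuinely different framing from the paper's, but it does not close the gap that you yourself identify, and that gap is precisely where the whole content of the proof lies. The paper does not pass through Cauchy's integral-geometric inequality for the diameter; it works directly with the discrete set $\{x_{1},\ldots,x_{q}\} := B_{Q_{0}} \cap E \cap \pi_{V}^{-1}\{w\}$ sitting inside the affine $(d-n)$-plane $W = \pi_{V}^{-1}\{w\}$, with $q < \infty$ for $\gamma_{d,n} \times \calH^{n}$ a.e.\ $(V,w)$ by Eilenberg's inequality. The "little graph-theoretic construction" is then carried out in the ambient metric of $\R^{d}$: one fixes a $\tfrac14$-net $\xi_{1},\ldots,\xi_{p}$ of $S^{d-1}$ with $p \sim_{d} 1$ directed open cones $C_{j}$ around the half-lines $\R_{+}\xi_{j}$, and for every vertex $x_{i}$ and every $j$ one draws one edge from $x_{i}$ to a nearest point of $\{x_{1},\ldots,x_{q}\} \cap (x_{i} + C_{j})$ whenever that intersection is non-empty. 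This gives $\card\calE \lesssim_{d} q$ edges (property (E1)), and a greedy "always step into the cone containing the target" argument shows any two points $x_{i},x_{j}$ are connected by an $\calE$-path contained in $\bar{B}(x_{i},2|x_{i}-x_{j}|)$ (property (E2)). Then $\diam(B_{Q}\cap E\cap W) \leq \sum_{I\in\calE,\ I\subset 4B_{Q}}|I|$, and swapping the sums, each edge $I$ contributes $\sum_{Q : I\subset 4B_{Q}}|I|/\ell(Q) \lesssim 1$, so the Carleson sum for fixed $(V,w)$ is $\lesssim_{d} \card\calE \lesssim_{d} q$, and a final application of Eilenberg integrates this to $\lesssim \mu(Q_{0})$.

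The obstacle you name — that charging each $Q$ to its extremal straddling pair $(z_{i(Q)},z_{j(Q)})$ produces up to $\binom{q}{2} \sim q^{2}$ distinct pairs, which is too many — is exactly the obstruction, and "a clever book-keeping argument" is not a proof of the proposition. Without constructing some set $\calE$ of size $O(q)$ together with the connectivity-inside-doubled-balls property (E2), your argument does not close, because there is no way to dominate the left side of \eqref{form91} by $O(q)$. Moreover your 1-D reduction via Cauchy arguably makes this harder, not easier: after projecting by $P_{\theta}$ you have lost the $(d-n)$-dimensional geometry of $W$ in which the cone construction lives, and the needed spanning structure is most naturally built before the projection, not after it. The codimension-one shortcut (consecutive preimages straddle all cubes) relies on $B_{Q}\cap W$ being an interval; that fails as soon as $d-n \geq 2$, as you note, and you would in effect have to rediscover the paper's cone construction inside $W$ to fix it. So the proposal is a plausible setup with the key lemma missing, not a complete alternative proof.
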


\begin{proof} Fix $Q_{0} \in \calD$. By definitions,
\begin{equation}\label{form86} \w(\calD(Q_{0})) = \int_{G(d,n)} \int_{V} \sum_{Q \in \calD(Q_{0})} \frac{\diam(B_{Q} \cap E \cap \pi_{V}^{-1}\{w\})}{\ell(Q)} \, d\calH^{n}(w) \, d\gamma_{d,n}(V). \end{equation} 
The main tool in the proof is \emph{Eilenberg's inequality}
\begin{equation}\label{eilenberg} \int_{V} \card(A \cap \pi_{V}^{-1}\{w\}) \, d\calH^{n}(w) \lesssim_{n} \calH^{n}(A), \qquad V \in G(d,n), \end{equation}
where $A \subset \R^{d}$ is Borel, see \cite[Theorem 7.7]{zbMATH01249699}. In particular, we infer from \eqref{eilenberg} that
\begin{displaymath} q_{V,w} := \card(B_{Q_{0}} \cap E \cap \pi_{V}^{-1}\{w\}) < \infty \end{displaymath}
for all $V \in G(d,n)$ and for $\calH^{n}$ a.e. $w \in V$. We continue our estimate of \eqref{form86} for a fixed plane $V \in G(d,n)$, and for any $w \in V$ such that $q := q_{V,w} < \infty$. If $q \in \{0,1\}$, then
\begin{displaymath} \diam(B_{Q} \cap E \cap \pi_{V}^{-1}\{w\}) \leq \diam(B_{Q_{0}} \cap E \cap \pi_{V}^{-1}\{w\}) = 0, \qquad Q \in \calD(Q_{0}), \end{displaymath}
so these pairs $(V,w)$ contribute nothing to the integral in \eqref{form86}. So, assume that $q \geq 2$, and enumerate the points in $B_{Q_{0}} \cap E \cap \pi_{V}^{-1}\{w\}$ as
\begin{displaymath} B_{Q_{0}} \cap E \cap \pi_{V}^{-1}\{w\} = \{x_{1},\ldots,x_{q}\}. \end{displaymath}
We will next need to construct a "spanning graph" whose vertices are the points $x_{1},\ldots,x_{q}$, and whose edges "$\mathcal{E}$" are a (relatively small) subset of the $\sim q^{2}$ segments connecting the vertices. More precisely, we need the following properties from $\mathcal{E}$:
\begin{enumerate}
\item[(E1) \phantomsection\label{E1}] $\card \mathcal{E} \lesssim_{d} q$.
\item[(E2) \phantomsection\label{E2}] For every $1 \leq i < j \leq q$, there is a connected union of edges in $\mathcal{E}$ which connects $x_{i}$ to $x_{j}$ inside $\bar{B}(x_{i},2|x_{i} - x_{j}|)$. 
\end{enumerate} 
Property \nref{E2} sounds like quasiconvexity, but is weaker: there are no restrictions on the length of the connecting $\mathcal{E}$-path, as long as it is contained in $B(x_{i},2|x_{i} - x_{j}|)$. Let us then find the edges with the properties \nref{E1}-\nref{E2}. Let $\xi_{1},\ldots,\xi_{p} \subset S^{d - 1}$ be a maximal $\tfrac{1}{4}$-separated set on $S^{d - 1}$, with $p \sim_{d} 1$, and let 
\begin{displaymath} C_{j} := \{re : e \in B(\xi_{j},\tfrac{1}{2}) \cap S^{d - 1} \text{ and } r > 0\}, \qquad 1 \leq j \leq p, \end{displaymath}
be a directed open cone around the half-line $\{r\xi_{j} : r > 0\}$. By the net property of $\xi_{1},\ldots,\xi_{p}$,
\begin{equation}\label{form87} \R^{d} \, \setminus \, \{0\} \subset \bigcup_{j = 1}^{p} C_{j}. \end{equation}
We claim that the following holds: if $y \in x + C_{j}$, then 
\begin{equation}\label{form88} \bar{B}(x,|x - y|) \cap (x + C_{j}) \subset B(y,|x - y|). \end{equation} 
First, use translations and dilations to reduce to the case $x = 0$ and $|x - y| = 1$:
\begin{displaymath} y \in C_{j} \cap S^{d - 1} \quad \Longrightarrow \quad \bar{B}(1) \cap C_{j} \subset B(y,1). \end{displaymath} 
To check this case, one first verifies by explicit computation that if $y \in S^{d - 1}$, then the set $C_{y} := \{re : e \in B(y,1) \cap S^{d - 1} \text{ and } 0 < r \leq 1\}$ is contained in $B(y,1)$. Consequently, 
\begin{displaymath} y \in C_{j} \cap S^{d - 1} \subset B(\xi_{j},\tfrac{1}{2}) \quad \Longrightarrow \quad B(\xi_{j},\tfrac{1}{2}) \subset B(y,1) \quad \Longrightarrow \quad \bar{B}(1) \cap C_{j} \subset C_{y} \subset B(y,1). \end{displaymath}

We are then prepared to define the edge set $\mathcal{E}$. Fix one of the points $x_{i}$, $1 \leq i \leq q$. For every of $1 \leq j \leq p$, draw an edge (that is, a segment) between $x_{i}$ and one of the points closest to $x_{i}$ in the finite set
\begin{displaymath} \{x_{1},\ldots,x_{q}\} \cap (x_{i} + C_{j}) \subset \{x_{1},\ldots,x_{q}\} \, \setminus \, \{x_{i}\}, \end{displaymath}
if the intersection on the left hand side is non-empty; this is the case for at least one $j \in \{1,\ldots,p\}$ by \eqref{form87}. Thus, for every $x_{i}$, one draws $\sim_{d} 1$ edges. Let $\mathcal{E}$ be the collection of all edges so obtained. Then $\card \mathcal{E} \sim_{d} q$, so requirement \nref{E1} is met. 

To prove \nref{E2}, fix $s_{0} := x_{i}$ and $t := x_{j}$ with $1 \leq i < j \leq q$. The plan is to find, recursively, a collection of segments $I_{j} := [s_{j - 1},s_{j}] \in \mathcal{E}$, $1 \leq j \leq k$, whose union is connected, contains $\{s_{0},t\}$ (indeed $s_{k} = t$) and is contained in 
\begin{displaymath} \bar{B}(t,|s_{0} - t|) \subset \bar{B}(s,2|s_{0} - t|). \end{displaymath}
By \eqref{form87}, there is a half-cone $C_{j_{1}}$ with $t \in s_{0} + C_{j_{1}}$. Let $I_{1} = [s_{0},s_{1}] \in \mathcal{E}$ be the edge connecting $s_{0}$ to one of the nearest points $s_{1} \in \{x_{1},\ldots,x_{q}\} \cap (s_{0} + C_{j_{1}})$. Evidently $|s_{0} - s_{1}| \leq |s_{0} - t|$, since $t \in \{x_{1},\ldots,x_{q}\} \cap (s_{0} + C_{j_{1}})$ itself is one of the candidates among which $s_{1}$ is chosen. Hence, applying \eqref{form88} with $x = s_{0}$ and $y = t$, we find that
\begin{equation}\label{form89} s_{1} \in \bar{B}(s_{0},|s_{0} - t|) \cap (s_{0} + C_{j_{1}}) \subset B(t,|s_{0} - t|). \end{equation}
In particular,
\begin{equation}\label{form90} |s_{1} - t| < |s_{0} - t|. \end{equation}
Also, we see from \eqref{form89} that $\partial I_{1} = \{s_{0},s_{1}\} \subset \bar{B}(t,|s_{0} - t|)$, and hence $I_{1} \subset \bar{B}(t,|s_{0} - t|)$ by convexity. We then replace "$s_{0}$" by "$s_{1}$" and repeat the procedure above: by \eqref{form87}, there is a half-cone $C_{j_{2}}$ with the property $t \in s_{1} + C_{j_{2}}$ (unless $s_{1} = t$ and we are done already), and we let $I_{2} = [s_{1},s_{2}] \in \mathcal{E}$ be the edge connecting $s_{1}$ to the nearest point $s_{2} \in \{x_{1},\ldots,x_{q}\} \cap (s_{1} + C_{j_{2}})$. Then $|s_{1} - s_{2}| \leq |s_{1} - t|$ (otherwise we chose $t$ over $s_{2}$), so
\begin{displaymath} s_{2} \in \bar{B}(s_{1},|s_{1} - t|) \cap (s_{1} + C_{j_{2}}) \stackrel{\eqref{form88}}{\subset} B(t,|s_{1} - t|) \stackrel{\eqref{form90}}{\subset} \bar{B}(t,|s_{0} - t|).  \end{displaymath} 
From the inclusions above, we infer that $I_{2} \subset \bar{B}(t,|s_{0} - t|)$, and also 
\begin{displaymath} |s_{2} - t| < |s_{1} - t| \stackrel{\eqref{form90}}{<} |s - t|. \end{displaymath} 
We proceed inductively, finding further segments $[s_{i},s_{i + 1}] \in \mathcal{E}$, which are contained in $\bar{B}(t,|s_{0} - t|)$, and with the property that $|s_{j + 1} - t| < |s_{j} - t| < \ldots < |s_{0} - t|$. Since the points $s_{j}$ are drawn from the finite set $\{x_{1},\ldots,x_{q}\}$, these strict inequalities eventually force $s_{k} = t$ for some $k \geq 1$, and at that point the proof of property \nref{E2} is complete. 

Let us then use the edges $\mathcal{E}$ constructed above to estimate the integrand in \eqref{form86}. I claim that
\begin{equation}\label{form91} \sum_{Q \in \calD(Q_{0})} \frac{\diam(B_{Q} \cap E \cap \pi_{V}^{-1}\{w\})}{\ell(Q)} \lesssim \sum_{I \in \mathcal{E}} \mathop{\sum_{Q \in \calD(Q_{0})}}_{I \subset 4B_{Q}} \frac{|I|}{\ell(Q)}. \end{equation} 
To see this, fix $Q \in \mathcal{D}(Q_{0})$, and let $x_{i},x_{j} \in B_{Q} \cap E \cap \pi_{V}^{-1}\{w\} \subset \{x_{1},\ldots,x_{q}\}$ be points such that 
\begin{displaymath} |x_{i} - x_{j}| = \diam(B_{Q} \cap E \cap \pi_{V}^{-1}\{w\}). \end{displaymath} 
According to property \nref{E2} of the edge family $\mathcal{E}$, there exists a connected union of segments in $\mathcal{E}$ which is contained in
\begin{displaymath} B(x_{i},2|x_{i} - x_{j}|) \subset 4B_{Q} \end{displaymath} 
and which contains $\{x_{i},x_{j}\}$. Since the union is connected, the total length of the segments involved exceeds $|x_{i} - x_{j}|$:
\begin{displaymath} \diam(B_{Q} \cap E \cap \pi_{V}^{-1}\{w\}) = |x_{i} - x_{j}| \leq \mathop{\sum_{I \in \mathcal{E}}}_{I \subset 4B_{Q}} |I|. \end{displaymath}
Swapping the order of summation proves \eqref{form91}. To complete the proof of the proposition, fix $I \in \mathcal{E}$, and consider the inner sum in \eqref{form91}. Note that the inclusion $I \subset 4B_{Q}$ is only possible if $\ell(Q) \gtrsim |I|$. On the other hand, for a fixed side-length $2^{-j} \gtrsim |I|$, there are $\lesssim 1$ cubes $Q \in \mathcal{D}(Q_{0})$ with $\ell(Q) = 2^{-j}$ and $I \subset 4B_{Q}$. Putting these observations together,
\begin{displaymath} \mathop{\sum_{Q \in \calD(Q_{0})}}_{I \subset 4B_{Q}} \frac{|I|}{\ell(Q)} \lesssim 1. \end{displaymath} 
From this, \eqref{form91}, and the cardinality estimate $\card \mathcal{E} \lesssim_{d} q$ from \nref{E1} it follows that
\begin{displaymath} \sum_{Q \in \mathcal{D}(Q_{0})} \frac{\diam(B_{Q} \cap E \cap \pi_{V}^{-1}\{w\})}{\ell(Q)} \lesssim \card \mathcal{E} \lesssim_{d} q = \card (B_{Q_{0}} \cap E \cap \pi_{V}^{-1}\{w\}). \end{displaymath} 
Plugging this estimate into \eqref{form86} and using Eilenberg's inequality \eqref{eilenberg}, one finds that
\begin{displaymath} \w(\calD(Q_{0})) \lesssim_{d} \int_{G(d,n)} \int_{V} \card (B_{Q_{0}} \cap E \cap \pi_{V}^{-1}\{w\}) \, d\calH^{n}(w) \, d\gamma_{d,n}(V) \lesssim \mu(Q_{0}). \end{displaymath}
This completes the proof of the proposition. \end{proof}

Recall that our objective, in Proposition \ref{treeProp}, is to prove that each heavy tree $\calT_{j}$ satisfies $\w(\calT_{j}) \gtrsim \mu(Q(\calT_{j}))$ if the parameter $M \geq 1$ was chosen large enough. To accomplish this, we start by recording a technical criterion which guarantees that a general tree $\calT \subset \calD$ satisfies $\w(\calT) \gtrsim \mu(Q(\calT))$. Afterwards, the criterion will need to be verified for heavy trees.  

\begin{proposition}\label{prop2} For every $c,\delta > 0$ and $C_{0} \geq 1$ there exists $N \geq 1$ such that the following holds. Assume that the $n$-regularity constant of $E$ is at most $C_{0}$. Let $\mathcal{T} \subset \mathcal{D}$ be a tree with top cube $Q_{0} := Q(\mathcal{T})$. Assume that there is a subset $\mathcal{G} \subset \mathbf{Leaves}(\mathcal{T})$ with the following properties.
\begin{itemize}
\item All the cubes in $\mathcal{G}$ have \textup{PBP} with common plane $V_{0} \in G(d,n)$ and constant $\delta$:
\begin{equation}\label{form75} \calH^{n}(\pi_{V}(E \cap B_{Q})) \geq \delta \mu(Q), \qquad Q \in \mathcal{G}, \, V \in B(V_{0},\delta). \end{equation}
\item Write $f_{V} := \sum_{Q \in \mathcal{G}} \mathbf{1}_{\pi_{V}(B_{Q})}$ for $V \in B(V_{0},\delta)$. Assume that there is a subset $S_{G} \subset B(V_{0},\delta)$ such that the "high multiplicity" sets $H_{V} := \left\{x \in V : \mathcal{M}f_{V}(x) \geq N\right\}$ satisfy
\begin{equation}\label{form19} \int_{H_{V}} f_{V}(x) \, dx \geq cN^{-1}\mu(Q_{0}), \qquad V \in S_{G}. \end{equation} 
\end{itemize}
Here $\mathcal{M}f_{V}$ is the (centred) Hardy-Littlewood maximal function of $f_{V}$. Then 
\begin{displaymath} \w(\calT) \gtrsim c\delta N^{-1} \mu(Q_{0}) \cdot \gamma_{d,n}(S_{G}), \end{displaymath}
where the implicit constant only depends on "$d$" and the $n$-regularity constant of $E$.
\end{proposition}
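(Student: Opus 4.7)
The plan is to unfold Definition~\ref{width} and combine the two hypotheses: the PBP gives $\calH^{n}(\pi_{V}(E \cap B_{Q})) \gtrsim \delta \calH^{n}(\pi_{V}(B_{Q}))$ for every leaf $Q \in \mathcal{G}$ and every $V \in B(V_{0},\delta)$, while the maximal function bound on $H_{V}$ ensures that the projections of the leaves heavily overlap inside $V$. Writing out
\[ \w(\calT) = \int_{G(d,n)} \int_{V} \sum_{Q' \in \calT} \frac{\diam(B_{Q'} \cap E \cap \pi_{V}^{-1}\{w\})}{\ell(Q')} \, d\calH^{n}(w) \, d\gamma_{d,n}(V), \]
the task reduces, after restricting the outer integral to $V \in S_{G}$, to establishing for each such $V$ a per-direction bound of the form $\gtrsim c\delta N^{-1}\mu(Q_{0})$.

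Fix $V \in S_{G}$. For every $w \in H_{V}$ one has a ball $B_{w} = B(w,r_{w}) \subset V$ with $\fint_{B_{w}} f_{V} \geq N$, which expands to
\[ \sum_{Q \in \mathcal{G}} \calH^{n}(B_{w} \cap \pi_{V}(B_{Q})) \geq N \calH^{n}(B_{w}). \]
Using PBP to pass from $\pi_{V}(B_{Q})$ to $\pi_{V}(E \cap B_{Q})$, an averaging/Chebyshev argument inside $B_{w}$ produces a sub-set of measure $\gtrsim \delta \calH^{n}(B_{w})$ on which the $E$-multiplicity $g_{V}(w') := \#\{Q \in \mathcal{G} : w' \in \pi_{V}(E \cap B_{Q})\}$ is $\gtrsim \delta N$. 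On each such $w'$ I then select distinct leaves $Q_{1},\ldots,Q_{k}$ with $k \gtrsim \delta N$ and points $p_{i} \in E \cap B_{Q_{i}}$ projecting to $w'$, so that the fiber $\pi_{V}^{-1}\{w'\}$ is populated by many genuine $E$-points living in distinct cubes of $\calT$.

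To turn fiber multiplicity into width, I would pair the points $p_{i}$ by their smallest common ancestor $Q^{*}_{ij} \in \calT$: each pair contributes $|p_{i} - p_{j}|/\ell(Q^{*}_{ij})$ to the summand, and the ancestors of $Q^{*}_{ij}$ in $\calT$ contribute an essentially geometric series with the same lower bound. A preliminary pigeonhole restricting to leaves $Q_{i}$ with side-length in a single dyadic band --- naturally the band $\sim r_{w}$ produced by the maximal function --- ensures that $\ell(Q^{*}_{ij}) \sim r_{w}$ and that the $Q_{i}$'s are spread out enough in the fiber direction (after an additional geometric thinning, possibly using Lemma~\ref{MOLemma} to extract cubes whose $V^{\perp}$-positions are well separated) so that $|p_{i} - p_{j}| \gtrsim \ell(Q^{*}_{ij})$ for many pairs. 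This yields the pointwise lower bound $\sum_{Q' \in \calT} \diam(B_{Q'} \cap E \cap \pi_{V}^{-1}\{w'\})/\ell(Q') \gtrsim 1$, and integrating it over $w'$ against the mass bound $\int_{H_{V}} f_{V} \geq cN^{-1}\mu(Q_{0})$ produces the required per-$V$ estimate; the outer integration over $V \in S_{G}$ then supplies the factor $\gamma_{d,n}(S_{G})$.

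The principal obstacle I anticipate is this last conversion step: without the dyadic scale selection, the ratio $|p_{i} - p_{j}|/\ell(Q^{*}_{ij})$ can be arbitrarily small when $Q^{*}_{ij}$ is much coarser than either $Q_{i}$ or $Q_{j}$, so the pigeonhole on leaf scales is essential and will cost a factor logarithmic in the number of dyadic scales separating $\ell(Q_{0})$ from the smallest relevant leaf, a cost that must be absorbed by the largeness of $N$. A secondary subtlety is that even at a fixed scale, adjacent dyadic cubes may supply points $p_{i}, p_{j}$ whose separation is geometrically negligible; circumventing this requires exploiting that the fibers $\pi_{V}^{-1}\{w'\}$ are transverse to $V$ and that many of the chosen leaves must therefore be genuinely spread in the $V^{\perp}$-direction in order to contribute distinct $E$-points to the common projected location $w'$.
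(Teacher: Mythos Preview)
Your overall architecture is right and matches the paper: restrict the outer integral to $V \in S_{G}$, and for each such $V$ prove
\[
\sum_{Q' \in \calT} \int_{V} \frac{\diam(B_{Q'} \cap E \cap \pi_{V}^{-1}\{w\})}{\ell(Q')}\, d\calH^{n}(w) \gtrsim c\delta N^{-1}\mu(Q_{0}).
\]
You also correctly identify the principal obstacle: when the leaves in $\mathcal{G}$ live at many different scales, points $p_{i},p_{j}$ in a common fibre may have smallest common ancestor $Q^{*}_{ij}$ with $\ell(Q^{*}_{ij}) \gg |p_{i}-p_{j}|$, so the contribution $|p_{i}-p_{j}|/\ell(Q^{*}_{ij})$ is useless.

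The genuine gap is your proposed remedy. You write that a pigeonhole onto a single dyadic band of leaf side-lengths ``will cost a factor logarithmic in the number of dyadic scales separating $\ell(Q_{0})$ from the smallest relevant leaf, a cost that must be absorbed by the largeness of $N$.'' This cannot work: the tree $\calT$ is arbitrary, so the number of generations between $Q_{0}$ and the smallest leaf is not bounded by anything depending on $c,\delta,C_{0}$ (or $N$), and the conclusion must retain the factor $c\delta N^{-1}$ exactly. There is no slack to absorb a loss that depends on the depth of the tree. Moreover, your suggestion that the relevant band is ``naturally $\sim r_{w}$'' is unjustified: the balls $\pi_{V}(B_{Q})$ contributing to $\fint_{B_{w}} f_{V} \geq N$ can have any radius, and there is no reason most of the mass should come from leaves of size $\sim r_{w}$.

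The paper replaces your pigeonhole by a genuinely multi-scale device, Lemma~\ref{lemma1}: from $\int_{H_{V}} f_{V} \geq cN^{-1}$ one extracts disjoint cubes $R \subset V$ (at \emph{all} scales simultaneously) such that $\|f_{R}\|_{1} > M|R|$ with $M \sim \delta^{-1}$, and $\sum_{R}\|f_{R}\|_{1} \gtrsim cN^{-1}$. Inside each such $R$ one now has leaves $Q$ with $B_{Q} \subset \pi_{V}^{-1}(R)$ and total projected measure $\gtrsim M|R|$; a $5r$-covering on the balls $2B_{Q}$ enforces $2B_{Q} \cap 2B_{Q'} = \emptyset$ for the surviving leaves, which is precisely the separation you were missing for your ``secondary subtlety''. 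With this separation and PBP one finds, for $\calH^{n}$-most $w \in R$, at least two $E$-points $p,q$ in the fibre lying in distinct separated balls $B_{Q_{p}},B_{Q_{q}}$; the smallest ancestor $Q' \in \calT$ with $p,q \in B_{Q'}$ then automatically satisfies $\ell(Q') \sim |p-q|$, giving the pointwise bound $\gtrsim 1$ you wanted without any logarithmic loss. Summing over the disjoint $R$'s recovers the full $cN^{-1}$ mass. In short, your outline is on the right track but the multi-scale reduction requires a real lemma (a Calder\'on--Zygmund-type localisation of $f_{V}$), not a pigeonhole.
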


The proof of Proposition \ref{prop2} would be fairly simple if all the leaves in $\mathcal{G}$ had approximately the same generation in $\calD$. In our application, this cannot be assumed, unfortunately, and we will need another auxiliary result to deal with the issue: 

\begin{lemma}\label{lemma1} Fix $M,d,\gamma \geq 1$ and $c > 0$. Then, the following holds if $A = A_{d} \geq 1$ is large enough, depending only on $d$ (as in "$\R^{d}$"), and 
\begin{equation}\label{form41} N > A^{(\gamma + 1)^{2}}M^{\gamma + 2}/c \end{equation} 
Let $\mathcal{B}$ be a collection of balls contained in $B(0,1) \subset \R^{d}$, and associate to every $B \in \mathcal{B}$ a weight $w_{B} \geq 0$. Set
\begin{displaymath} f = \sum_{B \in \mathcal{B}} w_{B}\mathbf{1}_{B}, \end{displaymath}
and write $H_{N} := \{\mathcal{M}f \geq N\}$, where $\mathcal{M}f$ is the Hardy-Littlewood maximal function of $f$. Assume that
\begin{displaymath} \int_{H_{N}} f(x) \, dx \geq cN^{-\gamma}, \end{displaymath}
Then, there exists a collection $\mathcal{R}_{\mathrm{heavy}}$ of disjoint cubes such that the "sub-functions"
\begin{displaymath} f_{R} := \mathop{\sum_{B \in \mathcal{B}}}_{B \subset R} w_{B}\mathbf{1}_{B}, \qquad R \in \mathcal{R}_{\mathrm{heavy}}, \end{displaymath}
satisfy the following properties:
\begin{displaymath} \sum_{R \in \mathcal{R}_{\mathrm{heavy}}} \|f_{R}\|_{1} \geq c2^{-2(\gamma + 1)}N^{-\gamma} \quad \text{and} \quad \|f_{R}\|_{1} > M|R|, \qquad R \in \mathcal{R}_{\mathrm{heavy}}. \end{displaymath} 
\end{lemma}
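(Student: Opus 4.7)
My approach is a dyadic stopping-time argument combined with a scale-separation trick to handle the fact that $\|f_R\|_1$ only counts balls $B\in\mathcal{B}$ fully contained in $R$.

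I would first pass to a dyadic framework: by the standard comparison of $\mathcal{M}$ with $O_d(1)$ shifted dyadic maximal functions, $H_N$ is contained in a union of disjoint maximal dyadic cubes $\mathcal{Q}$ satisfying $\frac{1}{|Q|}\int_Q f\geq N/C_d$, and the hypothesis gives $\sum_{Q\in\mathcal{Q}}\int_Q f\gtrsim cN^{-\gamma}$. The dimensional factor $C_d$ here is absorbed into the exponent $A^{(\gamma+1)^2}$.

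Next, with $A\geq 2$ a large parameter, I would split, for each $Q\in\mathcal{Q}$, $\int_Q f=S(Q)+L(Q)$, where $S(Q)$ is the contribution of balls $B\in\mathcal{B}$ with $r_B\leq\ell(Q)/A$ meeting $Q$, and $L(Q)$ is the rest. The key point is that every such ``small'' ball is contained in the concentric enlargement $\widetilde{Q}:=3Q$, so $\|f_{\widetilde{Q}}\|_1\geq S(Q)$. If $S(Q)\geq\tfrac{1}{2}\int_Q f$, then $\|f_{\widetilde{Q}}\|_1\gtrsim_d (N/C_d)|\widetilde{Q}|$, which exceeds $M|\widetilde{Q}|$ whenever $N\gtrsim_d M$; I would declare such $\widetilde{Q}$ ``good'' and add it to the collection. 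If instead $L(Q)\geq\tfrac{1}{2}\int_Q f$, then the dominant mass comes from balls of radius $>\ell(Q)/A$, which sit inside a suitable ancestor $Q^{(k)}$ of scale $\sim A^k\ell(Q)$, and I iterate the dichotomy at the coarser scale. At each escalation the density threshold effectively picks up a factor $A^d$, while the mass $\int_{Q^{(k)}}f$ is constrained by the maximal-cube property of the original $Q$; balancing these two effects, I expect the iteration to terminate within $\gamma+1$ meaningful levels---which is the origin of the $A^{(\gamma+1)^2}$ and $M^{\gamma+2}$ exponents in the hypothesis on $N$.

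Finally, the cubes so produced across distinct $Q$'s may overlap, and I would apply a Vitali-type selection to extract a disjoint subfamily $\mathcal{R}_{\mathrm{heavy}}$ capturing a $2^{-2(\gamma+1)}$-fraction of the total captured mass, matching the stated bound. The main obstacle is the bookkeeping across escalation levels: tracking how the density threshold degrades while preserving a fixed fraction of the captured mass is delicate, and I expect the cleanest formalisation is by induction on an ``escalation depth'' parameter, with the one-scale good-cube argument forming the base case.
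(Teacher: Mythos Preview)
Your escalation scheme has a concrete gap. When large balls dominate at $Q$, you say they ``sit inside a suitable ancestor $Q^{(k)}$'', but the balls with $r_B > \ell(Q)/A$ can have radii anywhere up to $1$, so no single ancestor scale captures them without either pigeonholing over radius scales (losing an unaffordable logarithmic factor) or jumping straight to the unit cube (losing all density control). Even if you escalate one level at a time to side $A\ell(Q)$ and re-apply the dichotomy, nothing forces termination in $\gamma+1$ steps: the iteration runs until all remaining balls become ``small'', which takes $\sim \log_A(1/\ell(Q))$ steps, and the density lower bound degrades by $A^d$ at each step regardless of $\gamma$. The exponent $\gamma$ enters only through the hypothesis $\int_{H_N} f \geq cN^{-\gamma}$, and your escalation never uses this to bound the number of levels---you are reverse-engineering ``$\gamma+1$'' from the shape of \eqref{form41} rather than deriving it.

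The paper's argument runs in the opposite direction, from coarse to fine, and the role of $\gamma$ becomes transparent. After the same shifted-dyadic reduction you propose, it assigns weights $\mathfrak{w}_R = \sum_{B \sim R} w_B$ to dyadic cubes and performs an iterated stopping time with \emph{decreasing} thresholds $N_k = \lfloor N/2^k \rfloor$: the family $\mathcal{R}_1$ consists of maximal cubes where the ancestral weight sum exceeds $N_1$; one splits $\mathcal{R}_1$ into heavy ($\|f_R\|_1 > M|R|$) and light, and inside each light cube recurses with threshold $N_2$. The light condition $\|f_{R_0}\|_1 \leq M|R_0|$ is precisely what bounds the total volume of the next generation, yielding $\sum_{R \in \mathcal{R}_k} |R| \leq (AM)^k/(N_1 \cdots N_k)$. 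If the light alternative persists to level $k$, one gets $c\,2^{-k}N^{-\gamma} \leq M \sum_{R \in \mathcal{R}_k} |R|$, which is impossible for $k = \gamma+1$ under \eqref{form41}. The descending thresholds also make the mass accounting $f_R \geq 2^{-k} f$ on $R \cap H_N$ automatic---this is exactly the ``delicate bookkeeping'' you flagged, resolved by going coarse-to-fine rather than fine-to-coarse.
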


The lemma is easy in the case where the balls in $\mathcal{B}$ have common radius, say $r$. Then one can take $\mathcal{R}_{\mathrm{heavy}}$ to be a suitable collection of disjoint cubes of side-length $\sim r$. In the application to Proposition \ref{prop2}, this case corresponds to the situation where $\ell(Q) \sim \ell(Q')$ for all $Q,Q' \in \mathcal{G}$. In the general case, the elementary but lengthy proof of Lemma \ref{lemma1} is contained in Appendix \ref{appA}.

We then prove Proposition \ref{prop2}, taking Lemma \ref{lemma1} for granted:
 
\begin{proof}[Proof of Proposition \ref{prop2}] The plan is to show that
\begin{equation}\label{form21} \sum_{Q \in \mathcal{T}} \int_{V} \frac{\w_{Q}(E,\pi_{V}^{-1}\{w\})}{\ell(Q)} \, d\calH^{n}(w) \gtrsim c\delta N^{-1}\mu(Q_{0}), \qquad V \in S_{G}. \end{equation}
The proposition then follows by recalling the definitions of $\w(Q)$ and $\w(\calT)$ from \eqref{d:width}-\eqref{s:width2} and integrating \eqref{form21} over $V \in S_{G}$.

To prove \eqref{form21}, we assume, to avoid a rescaling argument, that $\ell(Q_{0}) = 1$. Then, we begin by re-interpreting \eqref{form19} in such a way that we may apply Lemma \ref{lemma1}. Namely, we identify $V \in S_{G}$ with $\R^{n}$, and consider the collection of balls 
\begin{displaymath} \mathcal{B} := \{\pi_{V}(B_{Q}) : Q \in \mathcal{G}\}. \end{displaymath}
More precisely, let $\calB$ be an index set for the balls $\pi_{V}(B_{Q})$ such that if some ball $B = \pi_{V}(B_{Q})$ arises from multiple distinct cubes $Q \in \mathcal{G}$, then $B$ has equally many indices in $\mathcal{B}$. 

Note that the balls in $\mathcal{B}$ are all contained in 
\begin{displaymath} B_{0} := \pi_{V}(B_{Q_{0}}), \end{displaymath}
since $B_{Q} \subset B_{Q'}$ whenever $Q,Q' \in \mathcal{D}$ and $Q \subset Q'$. We then define $f := \sum_{B \in \mathcal{B}} \mathbf{1}_{B}$ and $H_{N} := \{x \in V : \mathcal{M}f(x) \geq N\}$. It follows from \eqref{form19}, and the assumption $\ell(Q_{0}) = 1$, that
\begin{displaymath} \int_{H_{N}} f(w) \, dw \gtrsim cN^{-1}. \end{displaymath}
In other words, the hypotheses of Lemma \ref{lemma1} are met with $\gamma = 1$. We fix $M := C\delta^{-1}$, where $C \geq 1$ is a large constant to be specified soon, depending only on the $n$-regularity constant of $E$. We then assume that $N > AM^{3}/c$, in accordance with \eqref{form41}. Lemma \ref{lemma1} now provides us with a collection $\mathcal{R} = \mathcal{R}_{\mathrm{heavy}}$ of disjoint cubes in $\R^{n} \cong V$ such that
\begin{equation}\label{form22} \sum_{R \in \mathcal{R}} \|f_{R}\|_{1} \gtrsim cN^{-1} \quad \text{and} \quad \|f_{R}\|_{1} \geq M|R| \text{ for } R \in \mathcal{R}. \end{equation} 
In this proof we abbreviate $|\cdot| := \calH^{n}|_{V}$. We recall that
\begin{displaymath} f_{R} = \mathop{\sum_{B \in \mathcal{B}}}_{B \subset R} w_{B}\mathbf{1}_{B} = \mathop{\sum_{Q \in \mathcal{G}}}_{B_{Q} \subset T(R)} \mathbf{1}_{\pi_{V}(B_{Q})}, \end{displaymath}
where $T(R) := \pi_{V}^{-1}(R)$. Therefore, the conditions in \eqref{form22} are equivalent to
\begin{equation}\label{form24} \sum_{R \in \mathcal{R}} \mathop{\sum_{Q \in \mathcal{G}}}_{B_{Q} \subset T(R)} \mu(Q) \gtrsim cN^{-1} \quad \text{and} \quad \mathop{\sum_{Q \in \mathcal{G}}}_{B_{Q} \subset T(R)} \mu(Q) \gtrsim M|R|, \quad R \in \mathcal{R}, \end{equation} 
where the implicit constants depend on the $n$-regularity constant of $\mu$. We now make a slight refinement to the set $\mathcal{G}$: for $R \in \mathcal{R}$ fixed, we apply the $5r$-covering theorem to the balls $\{2B_{Q} : Q \in \mathcal{G} \text{ and } B_{Q} \subset T(R)\}$. As a result, we obtain a sub-collection $\mathcal{G}_{R} \subset \mathcal{G}$ with the properties
\begin{equation}\label{form25} 2B_{Q} \cap 2B_{Q'} = \emptyset, \qquad Q,Q' \in \mathcal{G}_{R}, \, Q \neq Q', \end{equation}
and
\begin{displaymath} \mathop{\bigcup_{Q \in \mathcal{G}}}_{B_{Q} \subset T(R)} Q \subset \bigcup_{Q \in \mathcal{G}_{R}} 10B_{Q}. \end{displaymath}
In particular, by \eqref{form24},
\begin{equation}\label{form23a} \sum_{R \in \mathcal{R}} \sum_{Q \in \mathcal{G}_{R}} \mu(Q) \gtrsim \sum_{R \in \mathcal{R}} \mathop{\sum_{Q \in \mathcal{G}}}_{B_{Q} \subset T(R)} \mu(Q) \gtrsim cN^{-1} \end{equation}
and
\begin{equation}\label{form23} \sum_{Q \in \mathcal{G}_{R}} \mu(Q) \sim \sum_{Q \in \mathcal{G}_{R}} \mu(10B_{Q}) \gtrsim M|R| \end{equation}
by \eqref{form24}. We also write $\mathcal{B}_{R} := \{\pi_{V}(B_{Q}) : Q \in \mathcal{G}_{R}\}$, $R \in \mathcal{R}$, so $\mathcal{B}_{R} \subset \mathcal{B}$ is a collection of balls contained in $R$ satisfying
\begin{equation}\label{form43} \sum_{B \in \mathcal{B}_{R}} |B| \gtrsim M|R|, \qquad R \in \mathcal{R}. \end{equation}
Just like $\mathcal{B}$, the set $\calB_{R}$ should also, to be precise, be defined as a set of indices, accounting for the possibility that $B = \pi_{V}(B_{Q})$ arises from multiple cubes $Q \in \mathcal{G}_{R}$. Next, recall a key assumption of the proposition, namely that all the cubes in $\mathcal{G}$ have PBP with common ball $B(V_{0},\delta) \subset G(d,n)$. In particular, for our fixed plane $V \in S_{G} \subset B(V_{0},\delta)$, we have
\begin{equation}\label{form42} \calH^{n}(\pi_{V}(B_{Q} \cap E)) \geq \delta \mu(Q), \qquad Q \in \mathcal{G}. \end{equation}
Since the balls $B_{Q}$, $Q \in \mathcal{G}$, are all contained in $B_{0} := B_{Q_{0}}$, the ball associated with the top cube of the tree, the conclusion of \eqref{form42} persists if we replace $B_{Q} \cap E$ by $B_{Q} \cap E \cap B_{0}$. For $B = \pi_{V}(B_{Q})$ with $Q \in \mathcal{G}$, write $E_{B} := \pi_{V}(B_{Q} \cap E \cap B_{0})$, so \eqref{form42} implies that $|E_{B}| \gtrsim \delta |B|$. Then, for $R \in \mathcal{R}$ fixed, we infer from \eqref{form43} that
\begin{displaymath} \int_{R} \sum_{B \in \mathcal{B}_{R}} \mathbf{1}_{E_{B}}(w) \, dw = \sum_{B \in \mathcal{B}_{R}} |E_{B}| \gtrsim \delta \sum_{B \in \mathcal{B}_{R}} |B| \gtrsim \delta M|R| = C|R|. \end{displaymath} 
We now choose the constant $C \geq 1$ so large that
\begin{equation}\label{form44a} \int_{R} \sum_{B \in \mathcal{B}_{R}} \mathbf{1}_{E_{B}}(w) \, dw \geq 2|R|. \end{equation} 
Then, if we consider the "set of multiplicity $\leq 1$",
\begin{displaymath} L_{R} := \left\{w \in R : \sum_{B \in \mathcal{B}_{R}} \mathbf{1}_{E_{B}}(w) \leq 1 \right\} \subset R, \end{displaymath} 
we may infer from \eqref{form44a} that
\begin{displaymath} \int_{L_{R}} \sum_{B \in \mathcal{B}_{R}} \mathbf{1}_{E_{B}}(w) \, dw \leq |R| \leq \frac{1}{2} \int_{R} \sum_{B \in \mathcal{B}_{R}} \mathbf{1}_{E_{B}}(w) \, dw. \end{displaymath}
Consequently, if $P_{R} := R \, \setminus \, L_{R}$ is the "positive multiplicity set", we have
\begin{equation}\label{form44} \int_{P_{R}} \sum_{B \in \mathcal{B}_{R}} \mathbf{1}_{E_{B}}(w) \, dw \geq \frac{1}{2} \int_{R} \sum_{B \in \mathcal{B}_{R}} \mathbf{1}_{E_{B}}(w) \, dw \gtrsim \delta \sum_{Q \in \mathcal{G}_{R}} \mu(Q). \end{equation} 
Fix $w \in P_{R} \subset R$, and write
\begin{displaymath} m := m_{w} := \sum_{B \in \mathcal{B}_{R}} \mathbf{1}_{E_{B}}(w) \geq 2. \end{displaymath}
(If the sum happens to equal $\infty$, pick $m \geq 2$ arbitrary; eventually one will have to let $m \to \infty$ in this case). Unraveling the definitions, the $(d - n)$-plane $W := W_{w} := \pi_{V}^{-1}\{w\}$ contains $m$ points of $E \cap B_{0}$ inside $m$ distinct balls $B_{Q}$, with $Q \in \mathcal{G}_{R}$. Let $P \subset E \cap W$ be the set of these $m$ points, and define the following set $\mathcal{E}$ of edges connecting (some) pairs of points in $P$: for every point $p \in P$, pick exactly one of the points $q \in P \, \setminus \, \{p\}$ at minimal distance from $p$, and add the edge $(p,q)$ to $\mathcal{E}$. Note that $\card \mathcal{E} = m$, since $\mathcal{E}$ contains precisely one edge of the form $(p,q)$ for every $p \in P$. We have now used the assumption $m \geq 2$: otherwise we could not have drawn any edges in the preceding manner! Note that the edges in the graph $(P,\mathcal{E})$ are directed: $(p,q) \in \mathcal{E}$ does not imply $(q,p) \in \mathcal{E}$.

Now that the edge set $\mathcal{E}$ has been constructed, define the following relation between edges $I \in \mathcal{E}$ and the cubes $Q \in \mathcal{T}$: write $I \prec Q$ if $I \subset B_{Q}$, and $|I| \geq \rho \ell(Q)$. Slightly abusing notation, here $I$ also refers to the segment $[p,q]$, for an edge $(p,q) \in \mathcal{E}$. The choice of the constant $\rho > 0$ will become apparent soon, and it will only depend on the $n$-regularity constant of $E$. We now claim that
\begin{equation}\label{form45} \sum_{I \in \mathcal{E}} \mathop{\sum_{Q \in \mathcal{T}}}_{I \prec Q} \frac{|I|}{\ell(Q)} \gtrsim \card \mathcal{E} = m. \end{equation}
We already know that $\card \mathcal{E} = m$, so it remains to prove the first inequality. Fix $I = (p,q) \in \mathcal{E}$, with $p,q \in P$. Then, by the definition of $P$, the points $p$ and $q$ are contained in two balls $B_{p} := B_{Q_{p}}$ and $B_{q} := B_{Q_{q}}$, respectively, with $Q_{p},Q_{q} \in \mathcal{G}_{R}$ and $Q_{p} \neq Q_{q}$. In particular, we recall from \eqref{form25} that $2B_{p} \cap 2B_{q} = \emptyset$. Hence $p \notin 2B_{q}$, and $|I| \gtrsim \ell(Q_{q})$. On the other hand, $p,q \in B_{0}$, so $|I| \lesssim \ell(Q_{0})$. Let $Q' \supset Q_{q}$ be the smallest cube in the ancestry of $Q_{q}$ such that $p,q \in B_{Q'}$. Then $Q_{q} \subsetneq Q' \subset Q_{0}$, hence $Q' \in \mathcal{T}$, and
\begin{equation}\label{form92} \ell(Q') \lesssim |I|. \end{equation} 
Since $p,q \in B_{Q'}$, by convexity also $I \subset B_{Q'}$. If the constant "$\rho$" in the definition of "$\prec$" was chosen appropriately, we infer from $I \subset B_{Q'}$ and \eqref{form92} that $I \prec Q'$. This proves the lower bound in \eqref{form45}. 

Next, we claim that
\begin{equation}\label{form46} \w_{Q}(E,\pi_{V}^{-1}\{w\}) = \diam(E \cap B_{Q} \cap W) \gtrsim_{d} \mathop{\sum_{I \in \mathcal{E}}}_{I \prec Q} |I|, \qquad Q \in \mathcal{T}. \end{equation}
Indeed, fix $Q \in \calT$ and assume that there is at least one edge $I \in \mathcal{E}$ such that $I \prec Q$. Then $I \subset B_{Q} \cap W$, and both endpoints of $I$ lie in $E$, so $\diam(E \cap B_{Q} \cap W) \geq |I|$. Thus, \eqref{form46} boils down to showing that $\card \{I \in \mathcal{E} : I \prec Q\} \lesssim_{d} 1$. Let $P_{Q} := \{p \in P : (p,q) \in \mathcal{E} \text{ and } (p,q) \prec Q \text{ for some } q \in P \, \setminus \, \{p\}\}$. Then
\begin{displaymath} \card \{I \in \mathcal{E} : I \prec Q\} \leq \card P_{Q}, \end{displaymath}
since $\mathcal{E}$ contains precisely one edge of the form $(p,q)$ for all $p \in P$, i.e. the map $I = (p,q) \mapsto p$ is injective $\{I \in \mathcal{E} : I \prec Q\} \to P_{Q}$. So, it remains to argue that $\card P_{Q} \lesssim_{d} 1$. Otherwise, if $\card P_{Q} \gg_{d} 1$, there exist two distinct points $p_{1},p_{2} \in P_{Q}$ with $|p_{1} - p_{2}| < \rho \ell(Q)$. However, if $q \in P$ is such that $I := (p_{1},q) \prec Q$, then $|I| \geq \rho \ell(Q)$, and since $(p_{1},q) \in \mathcal{E}$, the point $q$ must be one of the nearest neighbours of $p$ in $P \, \setminus \, \{p\}$. This is not true, however, since $|p_{1} - p_{2}| < |p_{1} - q|$. We have proven \eqref{form46}.

A combination of \eqref{form45} and \eqref{form46} leads to
\begin{equation}\label{form84} \sum_{Q \in \mathcal{T}} \frac{\w_{Q}(E,\pi_{V}^{-1}\{w\})}{\ell(Q)} \geq \sum_{I \in \mathcal{E}} \mathop{\sum_{Q \in \mathcal{T}}}_{I \prec Q} \frac{|I|}{\ell(Q)} \gtrsim m = m_{w}, \quad w \in P_{R}. \end{equation} 
Here $P_{R}$ is the subset of $R$ introduced above \eqref{form44}.  Integrating over $w \in R$ next gives
\begin{displaymath} \int_{R} \sum_{Q \in \calT} \frac{\w_{Q}(E,\pi_{V}^{-1}\{w\})}{\ell(Q)} \, dw \stackrel{\eqref{form84}}{\gtrsim} \int_{P_{R}} m_{w} \, dw := \int_{P_{R}} \sum_{B \in \mathcal{B}_{R}} \mathbf{1}_{E_{B}}(w) \, dw \stackrel{\eqref{form44}}{\gtrsim} \delta \sum_{Q \in \mathcal{G}_{R}} \mu(Q). \end{displaymath} 
Finally, summing the result over the (disjoint) cubes $R \in \mathcal{R}$, and using \eqref{form23a}, we find that
\begin{displaymath} \sum_{Q \in \mathcal{T}} \int_{V} \frac{\w_{Q}(E,\pi_{V}^{-1}\{w\})}{\ell(Q)} \, d\calH^{n}(w) \gtrsim c \delta N^{-1}. \end{displaymath}
This completes the proof of \eqref{form21}, and the proof of the proposition. \end{proof}


\section{From big $\beta$ numbers to heavy cones}\label{s1}

Proposition \ref{prop2} contains criteria for showing that $\w(\calT) \gtrsim \mu(Q(\calT_{j}))$. To prove Proposition \ref{treeProp}, these criteria need to be verified for the heavy trees $\calT_{j}$. The selling points \nref{T1}-\nref{T4} of a heavy tree $\calT_{j}$ were that all of its leaves are contained in $M$ cubes in $\calT_{j}$ with non-negligible $\beta$-number (see \nref{T3}), and the total $\mu$ measure of the leaves is at least $\tfrac{1}{4}\mu(Q(\calT_{j}))$ (see \nref{T1}-\nref{T2}). We will use this information to show that if a reasonably wide cone is centred at a typical point $x$ contained in one of the leaves of $\calT_{j}$, then the cone intersects many other leaves at many different (dyadic) distances from $x$.

We first need to set up our notation for cones:

\begin{definition}[Cones] Let $V_{0} \in G(d,n)$, $\alpha > 0$, and $x \in \R^{d}$. We write
\begin{displaymath} X(x,V,\alpha) = \{y \in \R^{d} : |\pi_{V}(x - y)| \leq \alpha |x - y|\}. \end{displaymath}
For $0 < r < R < \infty$, we also define the truncated cones
\begin{displaymath} X(x,V,\alpha,r,R) := X(x,V,\alpha) \cap \bar{B}(x,R) \, \setminus \, B(x,r). \end{displaymath}
\end{definition}
Note the non-standard notation: $X(x,V,\alpha)$ is a cone with axis $V^{\perp} \in G(d,d - n)$! The next proposition extracts "conical" information from many big $\beta$-numbers:

\begin{proposition}\label{prop1} Let $\alpha,d,\epsilon,\theta > 0$ and $C_{0},H \geq 1$. Then, there exists $M \geq 1$, depending only on the previous parameters, such that the following holds. Let $E_{0} \subset \R^{d}$ be a $n$-regular set with regularity constant at most $C_{0}$, and let $E \subset E_{0} \cap B(0,1)$ be a subset of measure $\calH^{n}(E) \geq \theta > 0$ with the following property: for every $x \in E$, there exist $M$ distinct dyadic scales $0 < r < 1$ such that 
\begin{displaymath} \beta(B(x,r)) := \beta_{E_{0}}(B(x,r)) := \inf_{V \in \mathcal{A}(d,n)} \frac{1}{r^{n}} \int_{B(x,r) \cap E_{0}} \frac{\dist(x,V)}{r} \, d\calH^{n}(x) \geq \epsilon. \end{displaymath}
Then, there exists a subset $G \subset E$ of measure $\calH^{1}(G) \geq \theta/2$ such that for all $x \in G$,
\begin{equation}\label{form1} \card \{j \geq 0 : X(x,V,\alpha,2^{-j - 1},2^{-j}) \cap E \neq \emptyset\} \geq H \text{ for all } V \in G(d,n). \end{equation} 
\end{proposition}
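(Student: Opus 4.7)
The plan is to construct $G$ as a set of Lebesgue density points of $E$ in $E_{0}$, and then argue that each of the $M$ big-$\beta$ scales at $x\in G$ contributes at least one cone-hit at a nearby dyadic scale, for every $V\in G(d,n)$.

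Fix a small density parameter $\eta=\eta(\epsilon,\alpha,H,d,C_{0})>0$, to be chosen later. By Lebesgue's differentiation theorem, the density of $E_{0}\setminus E$ tends to $0$ at $\calH^{n}$-a.e.\ $x\in E$, so an Egorov-type truncation produces $G\subset E$ with $\calH^{n}(G)\geq\theta/2$ and a positive threshold $s_{0}>0$ such that $\calH^{n}(B(x,s)\cap(E_{0}\setminus E))\leq\eta s^{n}$ for every $x\in G$ and $0<s\leq s_{0}$. Choosing $M$ large enough, after discarding the bounded number of big-$\beta$ scales above $s_{0}$ we may assume that every $x\in G$ still has $\geq M/2$ big-$\beta$ scales with $r\leq s_{0}$.

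Now fix $x\in G$, $V\in G(d,n)$, and a big-$\beta$ scale $r\leq s_{0}$. The affine plane $x+V$ is admissible in the infimum defining $\beta_{E_{0}}(B(x,r))\geq\epsilon$, giving $\int_{E_{0}\cap B(x,r)}\dist(y,x+V)\,d\calH^{n}(y)\geq\epsilon r^{n+1}$. A Chebyshev argument produces a set $A=A(V,r)\subset E_{0}\cap B(x,r)$ with $\calH^{n}(A)\gtrsim\epsilon r^{n}$ on which $|\pi_{V^{\perp}}(y-x)|\geq c\epsilon r$. Once $\eta\ll\epsilon$, the density bound gives $\calH^{n}(A\cap E)\gtrsim\epsilon r^{n}>0$, and any $y\in A\cap E$ satisfies $|y-x|\in[c\epsilon r,r]$. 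Choosing $j$ so that $2^{-j-1}\leq|y-x|\leq 2^{-j}$, the point $y$ lies in the ``wide'' cone annulus $X(x,V,\alpha_{1},2^{-j-1},2^{-j})$ with $\alpha_{1}:=\sqrt{1-c^{2}\epsilon^{2}}$. Because each value of $j$ arises from at most $O(\log(1/\epsilon))$ of the $M$ big-$\beta$ scales, distinct scales give rise to $\gtrsim M/\log(1/\epsilon)$ distinct wide-cone-hit annuli, and choosing $M$ large enough settles the proposition whenever $\alpha\geq\alpha_{1}$.

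The main obstacle is the regime $\alpha<\alpha_{1}$, where a wide-cone hit need not be a narrow-cone hit. To handle this I would exploit the full measure $\calH^{n}(A\cap E)\gtrsim\epsilon r^{n}$ (rather than a single point) to get a genuinely $n$-dimensional family of directions from $x$, and then invoke Lemma~\ref{MOLemma}: each $y\in A\cap E$ is a narrow-cone hit for every plane $V'$ within Grassmannian distance $\sim\alpha$ of the perpendicular plane $V_{y}$ determined (essentially uniquely) by $y-x\in V_{y}^{\perp}$, so each big-$\beta$ scale supplies a collection of $G(d,n)$-balls of radius $\sim\alpha$. A covering/pigeonhole argument over $G(d,n)$, using the $n$-regularity of $E_{0}$ to guarantee the direction-diversity of $A\cap E$, should then show that every target $V\in G(d,n)$ is covered at at least $H$ distinct scales when $M$ is chosen large enough in terms of $\alpha$ and $H$. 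Quantifying this direction-diversity using only $n$-regularity is the technically delicate step, and it is what dictates how fast $M$ must blow up as $\alpha\to 0$.
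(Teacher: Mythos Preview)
Your proposal has two genuine gaps. First, the threshold $s_{0}$ from your Egorov argument depends on the specific set $E$, not just on $\alpha,d,\epsilon,\theta,C_{0},H$; since $M$ must be fixed uniformly in advance, you cannot ``discard the bounded number of big-$\beta$ scales above $s_{0}$'' --- that number is $\sim\log(1/s_{0})$, which is uncontrolled in the given parameters. Second, and more seriously, your treatment of the narrow-cone regime $\alpha<\alpha_{1}$ is only a sketch, and the sketch does not work. The plane ``$V_{y}$ determined (essentially uniquely) by $y-x\in V_{y}^{\perp}$'' is not well-defined when $n<d-1$: the set of such $V_{y}$ is a copy of $G(d-1,n)$, not a point. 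More fundamentally, the set $A(V,r)\cap E$ of measure $\gtrsim\epsilon r^{n}$ can lie entirely in the region $\{y:|\pi_{V}(y-x)|>\alpha|y-x|\}$, i.e.\ completely outside the narrow cone $X(x,V,\alpha)$, and $n$-regularity of $E_{0}$ does not prevent this at any given scale. There is no evident pigeonhole across the $M$ scales forcing every target $V\in G(d,n)$ to be hit $H$ times.

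The paper's route is entirely different and sidesteps both issues. It argues by contradiction: if the bad set $B=E\setminus G$ has measure $\geq\theta/2$, pigeonhole to a common failing plane $V$ and apply Proposition~\ref{MOProp} to place a big piece $B'\subset B$ on an $L$-Lipschitz graph $\Gamma$ over $V$, with $L\sim_{\alpha,H}1$. Since Lipschitz graphs satisfy the WGL (Dorronsoro), after discarding a small set $\Gamma_{\mathrm{bad}}$ one obtains $B''\subset B'$ on which at most $M/2$ scales have $\beta_{\Gamma,\infty}$ large; but every $x\in B''\subset E$ still has $M$ scales with $\beta_{E_{0}}\geq\epsilon$. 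At each of the $\geq M/2$ remaining cubes $Q$, the set $E_{0}\cap B_{Q}$ must deviate from $\Gamma$ by $\gtrsim\epsilon\ell(Q)$ on a set $E_{Q}$ of mass $\gtrsim\epsilon\mu(Q)$, and these sets have overlap $\lesssim_{\epsilon}1$ because $y\in E_{Q}$ forces $\ell(Q)\sim_{\epsilon}\dist(y,\Gamma)$. Summing gives $M\cdot\calH^{n}(B'')\lesssim_{\epsilon}1$, a contradiction for $M$ large.
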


The key point of Proposition \ref{prop1} is that information about the $\beta$-numbers relative to the "ambient" set $E_{0}$ is sufficient to imply something useful about cones intersecting the subset $E$. The proof is heavily based on \cite[Proposition 1.12]{MR3790063}, which we quote here:
\begin{proposition}\label{MOProp} Let $\alpha,d,\theta > 0$ and $C_{0},H \geq 1$. Then, there exist constants $\tau > 0$ and $L \geq 1$, depending only on the previous parameters, such that the following holds. Let $E_{0} \subset \R^{d}$ be an $n$-regular set with regularity constant at most $C_{0}$, and let $B \subset E_{0} \cap B(0,1)$ be a subset with $\calH^{n}(B) \geq \theta$ satisfying the following: there exists $V \in G(d,n)$ such that for every $x \in B$, 
\begin{displaymath} \card \{j \geq 0 : X(x,V,\alpha,2^{-j - 1},2^{-j}) \cap B \neq \emptyset\}  \leq H. \end{displaymath} 
Then, there exists a subset $B' \subset B$ with $\calH^{n}(B') \geq \tau$ which is contained on an $L$-Lipschitz graph over $V$. In fact, one can take $L \sim 2^{H}/\alpha$.
\end{proposition}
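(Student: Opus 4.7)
The plan is to prove Proposition \ref{prop1} by contradiction, via the contrapositive of Proposition \ref{MOProp} applied after pigeonholing the direction $V$.

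\textbf{Step 1 (Setup and pigeonholing).} Set $G := \{x \in E : \text{the count in \eqref{form1} is} \geq H \text{ for every } V \in G(d,n)\}$ and let $\tilde{G} := E \setminus G$. Assume for contradiction that $\calH^{n}(\tilde{G}) > \theta/2$. For each $x \in \tilde{G}$, fix a witness $V_{x} \in G(d,n)$ for which the cone count is $< H$. Cover $G(d,n)$ by finitely many balls $B(V_{1},\alpha/2),\ldots,B(V_{K},\alpha/2)$, with $K \sim_{d,\alpha} 1$. Since a unit vector in $V_{k}$ lies within $d(V_{k},V_{x}) < \alpha/2$ of $V_{x}$, the containment
\begin{displaymath} X(x,V_{k},\alpha/2) \subset X(x,V_{x},\alpha) \end{displaymath}
holds whenever $V_{x} \in B(V_{k},\alpha/2)$. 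By pigeonholing, some index $k$ produces a set $\tilde{G}_{k} := \{x \in \tilde{G} : V_{x} \in B(V_{k},\alpha/2)\}$ with $\calH^{n}(\tilde{G}_{k}) \geq \theta/(2K)$ and the property that for every $x \in \tilde{G}_{k}$,
\begin{displaymath} \card \{j \geq 0 : X(x,V_{k},\alpha/2,2^{-j-1},2^{-j}) \cap \tilde{G}_{k} \neq \emptyset\} < H. \end{displaymath}

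\textbf{Step 2 (Applying Proposition \ref{MOProp}).} Invoke Proposition \ref{MOProp} with $B := \tilde{G}_{k}$, plane $V_{k}$, angle $\alpha/2$, threshold $H-1$, and lower measure $\theta/(2K)$. This produces a subset $B' \subset \tilde{G}_{k}$ with $\calH^{n}(B') \geq \tau$ lying on an $L$-Lipschitz graph $\Gamma_{k}$ over $V_{k}$, where $\tau > 0$ and $L \sim 2^{H}/\alpha$ depend only on $d,\alpha,\theta,C_{0},H$. In particular $\Gamma_{k}$ is itself $n$-regular with a constant depending only on $L$ and $d$.

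\textbf{Step 3 (Lipschitz graph controls $\beta_{\Gamma_{k}}$).} Apply the easy direction of the geometric lemma (Carleson packing of $\beta_{1}$-numbers on an $n$-dimensional Lipschitz graph) to $\Gamma_{k}$ restricted to $B(0,2)$. By Chebyshev's inequality, there is a subset $B_{1} \subset B'$ with $\calH^{n}(B_{1}) \geq \tau/2$ such that
\begin{displaymath} \card \{j \geq 0 : \beta_{\Gamma_{k}}(B(x,2^{-j})) \geq \epsilon/2\} \leq N_{1}, \qquad x \in B_{1}, \end{displaymath}
with $N_{1}$ depending only on $\epsilon,L,d,\tau$, hence on the admissible parameters of the proposition.

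\textbf{Step 4 (Density comparison of $\beta_{E_{0}}$ with $\beta_{\Gamma_{k}}$).} For $\mu$-a.e. $x \in B_{1}$, Lebesgue differentiation on the $n$-regular measure $\mu = \calH^{n}|_{E_{0}}$ yields $\rho_{j}(x) := \mu(B(x,2^{-j}) \cap (E_{0} \setminus B'))/\mu(B(x,2^{-j})) \to 0$ as $j \to \infty$. By Egorov's theorem, there exist a scale $r_{1} > 0$ and a subset $B_{2} \subset B_{1}$ with $\calH^{n}(B_{2}) \geq \tau/4$ such that $\rho_{j}(x) < \epsilon/(4C_{0}^{2})$ whenever $x \in B_{2}$ and $2^{-j} < r_{1}$. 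Splitting the infimand defining $\beta_{E_{0}}$ over $B'$ and $E_{0}\setminus B'$, choosing the plane optimally for $\Gamma_{k}$, and using that $y \in B' \subset \Gamma_{k}$ enjoys $\mathrm{dist}(y,V) \lesssim \beta_{\Gamma_{k}}(B(x,2^{-j})) \cdot 2^{-j}$ on average, one obtains
\begin{displaymath} \beta_{E_{0}}(B(x,2^{-j})) \leq \beta_{\Gamma_{k}}(B(x,2^{-j})) + C_{0}\rho_{j}(x) < \beta_{\Gamma_{k}}(B(x,2^{-j})) + \epsilon/2 \end{displaymath}
for $x \in B_{2}$ and $2^{-j} < r_{1}$. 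Consequently, at such small scales, $\beta_{E_{0}} \geq \epsilon$ forces $\beta_{\Gamma_{k}} \geq \epsilon/2$.

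\textbf{Step 5 (Contradiction).} For each $x \in B_{2}$, the number of dyadic scales $2^{-j} < r_{1}$ with $\beta_{E_{0}}(B(x,2^{-j})) \geq \epsilon$ is at most $N_{1}$ by Steps 3--4, while the number of dyadic scales $2^{-j} \geq r_{1}$ is at most $\log_{2}(1/r_{1}) + 1$. Setting $M_{0} := N_{1} + \log_{2}(1/r_{1}) + 1$ and choosing $M > M_{0}$ in the hypothesis of Proposition \ref{prop1} contradicts the assumption that every $x \in E \supset B_{2}$ enjoys $\geq M$ large-$\beta_{E_{0}}$ scales. Hence $\calH^{n}(\tilde{G}) \leq \theta/2$ and we may take $G := E \setminus \tilde{G}$.

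\textbf{The main obstacle} is the uniformity required in Step 4: the scale $r_{1}$ produced by Egorov depends \emph{a priori} on $B'$ (hence on $E_{0}$), whereas Proposition \ref{prop1} requires $M$ to depend only on $\alpha,d,\epsilon,\theta,C_{0},H$. A careful treatment must either replace Egorov by a quantitative density estimate that relies only on the $n$-regularity of $E_{0}$ and on $\calH^{n}(B') \geq \tau$, or alternatively argue by a compactness/limit scheme over the (Hausdorff-compact) family of $C_{0}$-regular sets in $B(0,1)$. The remaining steps are essentially cosmetic once this uniformity is secured.
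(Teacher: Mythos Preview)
Your proposal is an argument for Proposition~\ref{prop1}, not for Proposition~\ref{MOProp} (the latter is quoted from \cite{MR3790063} and not re-proved in the paper). I compare with the paper's proof of Proposition~\ref{prop1}.

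Steps~1--3 match the paper closely: pigeonholing to a common direction, invoking Proposition~\ref{MOProp} to place a positive-measure subset $B'$ on a Lipschitz graph $\Gamma$, and using the geometric lemma on $\Gamma$ to bound the number of scales with large $\beta_{\Gamma}$. The divergence is at Step~4, and the gap you flag there is genuine and not repairable along the lines you suggest. The Egorov scale $r_{1}$ depends on the particular subset $B' \subset E_{0}$, and there is no quantitative Lebesgue density theorem giving a rate for $\rho_{j}(x) \to 0$ purely in terms of $C_{0}$ and $\calH^{n}(B')$: already for $E_{0}$ a line segment and $B'$ a fat Cantor set, the density ratios can converge arbitrarily slowly. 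The compactness route is equally problematic, since neither the hypothesis ``$\beta_{E_{0}}(B(x,r)) \geq \epsilon$ at $M$ scales'' nor the failure of \eqref{form1} passes cleanly to Hausdorff limits of $n$-regular sets.

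The paper sidesteps this by never comparing $\beta_{E_{0}}$ with $\beta_{\Gamma}$ through density of $B'$ in $E_{0}$. Instead, for each cube $Q$ where $\beta_{E_{0}}(Q) \geq \epsilon$ but $\beta_{\Gamma,\infty}$ is small on a slightly larger ball, a definite portion $E_{Q} \subset E_{0} \cap B_{Q}$ of measure $\gtrsim \epsilon\,\calH^{n}(Q)$ must lie in the $\sim \ell(Q)$-neighbourhood of $\Gamma$ but outside the $\sim \epsilon\ell(Q)$-neighbourhood. These sets $E_{Q}$ have overlap $\lesssim_{\epsilon} 1$, since a given $y$ can lie in $E_{Q}$ only when $\ell(Q) \sim_{\epsilon} \dist(y,\Gamma)$. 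Summing $\calH^{n}(E_{Q})$ over the $\gtrsim M$ such cubes above each point of $B''$, and comparing with $\calH^{n}(E_{0} \cap B(0,C)) \lesssim 1$, forces $\calH^{n}(B'') \lesssim_{\epsilon} M^{-1}$, contradicting the lower bound on $\calH^{n}(B'')$ for $M$ large. This bounded-overlap counting is the quantitative substitute for your density step, and all constants depend only on the admissible parameters.
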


We may then prove Proposition \ref{prop1}.
\begin{proof}[Proof of Proposition \ref{prop1}] It suffices to show that the subset $B \subset E$ such that \eqref{form1} fails has measure $\calH^{n}(B) < \theta/2$ if $M \geq 1$ was chosen large enough. Assume to the contrary that $\calH^{n}(B) \geq \theta/2$. By definition, for every $x \in B$, there exists an plane $V_{x} \in G(d,n)$ such that
\begin{equation}\label{form2} \card \{j \geq 0 : X(x,V_{x},\alpha,2^{-j - 1},2^{-j}) \cap E \neq \emptyset\}  < H. \end{equation}
We observe that the dependence of $V_{x}$ on $x \in B$ can be removed, at the cost of making $B$ and $\alpha$ slightly smaller. Indeed, choose an $\tfrac{\alpha}{2}$-net $V_{1},\ldots,V_{k} \subset G(d,n)$ with $k \sim_{\alpha,d,n} 1$, and note that for every $x \in B$, there exists $1 \leq j \leq k$ such that 
\begin{displaymath} \card \{i \geq 0 : X(x,V_{j},\tfrac{\alpha}{2},2^{-i - 1},2^{-i}) \cap E \neq \emptyset\}  < H. \end{displaymath}
By the pigeonhole principle, there is a subset $B' \subset B$ of measure $\calH^{n}(B') \gtrsim_{\alpha,d,n} \calH^{n}(B) \geq \theta/2$ such that the choice of $V := V_{j}$ is common for $x \in B'$. It follows that \eqref{form2} holds for this $V$, for all $x \in B'$, with $\tfrac{\alpha}{2}$ in place of $\alpha$. We replace $B$ by $B'$ without altering notation, that is, we assume that \eqref{form2} holds for all $x \in B$, and for some fixed $V \in G(d,n)$.

Now Proposition \ref{MOProp} can be applied to the set $B$, and the plane $V$. The conclusion is that there is a further subset $B' \subset B$ of measure 
\begin{equation}\label{form8} \calH^{n}(B') \sim_{\alpha,d,C_{0},\theta,H} 1 \end{equation}
which is contained in $\Gamma \cap B(0,1)$, where $\Gamma \subset \R^{d}$ is an $L$-Lipschitz graph over $V$ for some $L \sim 2^{H}/\alpha \sim_{\alpha,H} 1$. We will derive a contradiction, using that $B' \subset E$ and, consequently, 
\begin{equation}\label{form3} \beta_{E_{0}}(B(x,r)) \geq \epsilon \end{equation}
for all $x \in B'$, and for $M$ distinct dyadic scales $0 < r < 1$ (which may depend on $x \in B'$). For technical convenience, we prefer to work with a lattice $\calD$ of dyadic cubes on $E_{0}$. As usual, we define
\begin{displaymath} \beta_{E_{0}}(Q) := \beta_{E_{0}}(B_{Q}), \qquad Q \in \calD. \end{displaymath}
Then, reducing "$M$" by a constant factor if necessary, it follows from \eqref{form3} that every $x \in B'$ is contained in $\geq M$ distinct cubes $Q \in \calD$ of side-length $0 < \ell(Q) \leq 1$ satisfying $\beta_{E_{0}}(Q) \geq \epsilon$. Moreover, since $B' \subset E \subset E_{0} \cap B(0,1)$, we may assume that $B_{Q} \subset B(0,C)$ for all the cubes $Q \in \calD$, for some $C \sim_{C_{0}} 1$. 

The main tool is that since $\Gamma$ is an $n$-dimensional $L$-Lipschitz graph in $\R^{d}$, it satisfies the WGL with constants depending only on $L$ and $d$. This follows from a more quantitative result -- a \emph{strong geometric lemma} for Lipschitz graphs -- of Dorronsoro \cite[Theorem 2]{MR796440} (or see \cite[Lemma 10.11]{DS1}). As a corollary of the WGL, the subset $\Gamma_{\mathrm{bad}}$ of points $x \in \Gamma \cap B(0,1)$ for which
\begin{equation}\label{form4} \beta_{\Gamma,\infty}(B(x,r)) \geq c\epsilon \end{equation}
for $\geq M/2$ distinct dyadic scales $0 < r < 1$ has measure $\calH^{n}(\Gamma_{\mathrm{bad}}) \ll 1$, and in particular $\calH^{n}(\Gamma_{\mathrm{bad}}) \leq \calH^{n}(B')/2$, assuming that $M \geq 1$ is large enough, depending only on $L,c,C_{0},d,H,\epsilon$, and $\theta$. In \eqref{form4}, $c > 0$ is a constant so small that
\begin{equation}\label{form93} B_{Q} \subset B(x,c^{-1}\ell(Q)/100) \quad \text{ for all } x \in Q. \end{equation}
In particular, $c$ only depends on the $n$-regularity constant of $E$. Further, in \eqref{form4}, the quantity $\beta_{\Gamma,\infty}(B(x,r))$ is the $L^{\infty}$-type $\beta$-number
\begin{displaymath} \beta_{\Gamma,\infty}(B(x,r)) = \inf_{V \in \mathcal{A}(d,n)} \sup_{y \in \Gamma \cap B(x,r)} \frac{\dist(y,V)}{r}. \end{displaymath}
As pointed out after Definition \ref{wgl}, the WGL holds for the $L^{\infty}$-type $\beta$-numbers if and only if it does for the $L^{1}$-type $\beta$-numbers $\beta_{\Gamma}(B(x,r))$ (Dorronsoro's strong geometric lemma holds for the latter, hence implies the WGL for the former).

We then focus attention on $B'' := B' \, \setminus \, \Gamma_{\mathrm{bad}} \subset \Gamma \cap B(0,1)$, which still satisfies
\begin{equation}\label{form85} \calH^{n}(B'') \geq \tfrac{1}{2}\calH^{n}(B') \sim_{\alpha,d,C_{0},\theta,H} 1, \end{equation}
recalling \eqref{form8}. Comparing \eqref{form3} and \eqref{form4}, we find that every point $x \in B''$ has the following property: there exist $M/2$ cubes $Q \in \calD$ such that $x \in Q$,
\begin{equation}\label{form5} \beta_{E_{0}}(Q) \geq \epsilon \quad \text{and} \quad \beta_{\Gamma,\infty}(B(x,c^{-1}\ell(Q)/100)) < c\epsilon. \end{equation} 

Consider now a cube $Q \in \calD$ containing at least one point $x \in B''$ such that \eqref{form5} holds. In particular, recalling the choice of $c > 0$ from \eqref{form93}, the intersection 
\begin{displaymath} \Gamma \cap B_{Q} \subset \Gamma \cap B(x,c^{-1}\ell(Q)/100) \end{displaymath}
is contained in a slab $T \subset \R^{d}$ (a neighbourhood of an $n$-plane) of width $\leq cc^{-1}\epsilon \ell(Q)/100 = \epsilon \ell(Q)/100$. Since $\beta_{E_{0}}(Q) \geq \epsilon$, however, we have
\begin{displaymath} \calH^{n}(\{y \in E_{0} \cap B_{Q} : y \notin 2T\}) \gtrsim \epsilon \calH^{n}(Q). \end{displaymath}
In other words, for every $Q \in \calD$ containing some $x \in B''$ such that \eqref{form5} holds, there exists a subset $E_{Q} \subset E_{0} \cap B_{Q} \subset B(0,C)$\begin{itemize}
\item of measure $\calH^{n}(E_{Q}) \gtrsim \epsilon \calH^{n}(Q)$ which is contained
\item in the $\sim \ell(Q)$-neighbourhood of $\Gamma$, yet
\item outside the $\sim \epsilon \ell(Q)$-neighbourhood of $\Gamma$.
\end{itemize} 
The collection of such cubes in $\mathcal{D}$ will be denoted $\mathcal{G}$. As observed above \eqref{form5}, we have
\begin{equation}\label{form6} \sum_{Q \in \mathcal{G}} \mathbf{1}_{Q}(x) \geq M/2, \qquad x \in B''. \end{equation}
On the other hand, the sets $E_{Q}$ have bounded overlap in the sense
\begin{equation}\label{form7} \sum_{Q \in \mathcal{G}} \mathbf{1}_{E_{Q}}(y) \lesssim_{\epsilon} 1, \qquad y \in \R^{d}, \end{equation}
since $y \in \R^{d}$ can only lie in the sets $E_{Q}$ associated to cubes $Q \in \calD$ with $\ell(Q) \sim_{\epsilon} \dist(y,\Gamma)$. Combining \eqref{form6}-\eqref{form7}, we find that
\begin{align*} 1 \gtrsim \calH^{n}(E_{0} \cap B(0,C)) & \geq \calH^{n}\left(\bigcup_{Q \in \mathcal{G}} E_{Q} \right)\\
& \gtrsim_{\epsilon} \sum_{Q \in \mathcal{G}} \calH^{n}(E_{Q}) \sim_{\epsilon} \sum_{Q \in \mathcal{G}} \calH^{n}(Q)\\
& \geq \int_{B''} \sum_{Q \in \mathcal{G}} \mathbf{1}_{Q}(x) \, d\calH^{n}(x) \gtrsim M\calH^{n}(B''). \end{align*} 
We have shown that $\calH^{n}(B'') \lesssim_{\epsilon} M^{-1}$. This inequality contradicts \eqref{form85} if $M \geq 1$ is large enough, depending on $\alpha,d,\epsilon,C_{0},\theta$, and $H$. The proof of Proposition \ref{prop1} is complete.  \end{proof}


\section{Heavy trees have positive width}\label{s:heavyTrees}

We are equipped to prove Proposition \ref{treeProp}. Fix a heavy tree $\calT := \calT_{j}$, and recall from the heavy tree property \nref{T3} that if $Q \in \mathbf{Leaves}(\calT)$, then
\begin{displaymath} \card\{Q' \in \calT : Q \subset Q' \subset Q(\calT) \text{ and } \beta(Q') \geq \epsilon\} = M, \end{displaymath}
Moreover, by \nref{T1}-\nref{T2}, the total measure of $\mathbf{Leaves}(\calT)$ is 
\begin{equation}\label{form56} \mu(\cup \mathbf{Leaves}(\calT)) \geq \tfrac{1}{4}\mu(Q(\calT)). \end{equation}
Based on this information, we seek to verify the hypotheses of Proposition \ref{prop2}, which will eventually guarantee that $\w(\calT) \gtrsim 1$ and finish the proof of Proposition \ref{treeProp}. We split the argument into three parts. 
\subsection{Part I: Finding heavy cones} Abbreviate $Q_{0} := Q(\calT)$ and $\mathcal{L} := \mathbf{Leaves}(\calT)$. To avoid a rescaling argument later on, we assume with no loss of generality that
\begin{displaymath} \mu(Q_{0}) \sim \ell(Q_{0}) = 1. \end{displaymath}
For every $Q \in \mathcal{L}$, the PBP condition implies the existence of a plane $V_{Q} \in G(d,n)$ such that
\begin{equation}\label{form94} \calH^{n}(\pi_{V}(B_{Q} \cap E)) \geq \delta \mu(Q), \qquad V \in B(V_{Q},\delta). \end{equation} 
We would prefer that all the planes $V_{Q}$ are the same, and this can be arranged with little cost. Namely, pick a $\tfrac{\delta}{2}$-net $\{V_{1},\ldots,V_{m}\} \subset G(d,n)$ with $m \sim_{\delta,d,n} 1$, and note that for all $Q \in \mathcal{L}$, there is some $V_{j}$ such that $S_{j} := B(V_{j},\tfrac{\delta}{2}) \subset B(V_{Q},\delta) =: S_{Q}$. Therefore, by the pigeonhole principle, there is a fixed index $1 \leq j \leq m$ with the property
\begin{displaymath} \mathop{\sum_{Q \in \mathcal{L}}}_{S_{j} \subset S_{Q}} \mu(Q) \geq \frac{1}{m} \sum_{Q \in \mathcal{L}} \mu(Q) \stackrel{\eqref{form56}}{\sim}_{\delta,d,n} 1. \end{displaymath}
Let $\mathcal{L}_{G}$ be the \emph{good leaves} satisfying $S_{j} \subset S_{Q}$ for this $j$, and write $S := S_{j}$ and $V_{0} := V_{j}$. We have just argued that $\mu(\cup \mathcal{L}_{G}) \sim_{\delta,d,n} 1$, and \eqref{form94} holds for all $Q \in \mathcal{L}_{G}$, for all 
\begin{displaymath} V \in S = B_{G(d,n)}(V_{0},\tfrac{\delta}{2}). \end{displaymath}
From this point on, I cease recording the dependence of the "$\lesssim$" notation on the $n$-regularity and PBP constants $C_{0}$ and $\delta$.

For technical purposes, let us prune the set of good leaves a little further. Namely, apply the $5r$-covering theorem to the balls $10B_{Q}$, $Q \in \mathcal{L}_{G}$. As a result, we obtain a sub-collection of the good leaves, still denoted $\mathcal{L}_{G}$, with the separation property
\begin{equation}\label{form57} 10B_{Q} \cap 10B_{Q'} = \emptyset, \qquad Q,Q' \in \mathcal{L}_{G}, \, Q \neq Q', \end{equation} 
and such that the lower bound $\mu(\cup \mathcal{L}_{G}) \sim 1$ remains valid. 

Next we arrive at some geometric arguments. We may and will assume, with no loss of generality, and without further mention, that the radius of the ball $S = B_{G(d,n)}(V_{0},\tfrac{\delta}{2})$ is "small enough", in a manner depending only on $d$.

For every $Q \in \mathcal{L}_{G}$, pick an $n$-dimensional disc $D_{Q} \subset B_{Q}$ which is parallel to the plane $V_{0}$ and which satisfies
\begin{displaymath} \calH^{n}(D_{Q}) \sim \mu(Q) \text{ and } \calH^{n}(D_{Q} \cap E) = 0. \end{displaymath}
Such discs are pairwise disjoint by the separation property \eqref{form57}. We will also use frequently that the restrictions $\pi_{V}|_{D_{Q}} \colon D_{Q} \to V$ are bilipschitz for all $Q \in \mathcal{L}_{G}$ and $V \in S = B_{G(d,n)}(V_{0},\tfrac{\delta}{2})$ if $\delta > 0$ is small enough, as we assume. Therefore, the projections $\pi_{V}(D_{Q}) \subset V$ are $n$-regular ellipsoids which contain, and are contained in, $n$-dimensional balls of radius $\sim \mathrm{rad}(D_{Q})$.

We then consider the slightly augmented set $E_{+}$, where we have added the discs corresponding to all good leaves:
\begin{displaymath} E_{+} := E \cup \bigcup_{Q \in \calL_{G}} D_{Q} =: E \cup E_{D}. \end{displaymath}
The point behind the set $E_{D}$ can already be explained. Compare the two statements 
\begin{itemize}
\item[(a)] The Hardy-Littlewood maximal function of $\pi_{V\sharp}(\calH^{n}|_{E})$ is large at $x \in V \in S$, 
\item[(b)] The Hardy-Littlewood maximal function of $\pi_{V\sharp}(\calH^{n}|_{E_{D}})$ is large at $x \in V \in S$.
\end{itemize}
Statement (b) contains much more information! Statement (a) could e.g. be true because a single cube $Q \in \mathcal{L}_{G}$ satisfies $\pi_{V}(Q) = \{x\}$. But since $\pi_{V}|_{D_{Q}}$ is bilipschitz for all $Q \in \mathcal{L}_{G}$ and $V \in S$, statement (b) forces $\pi_{V}^{-1}\{x\}$ to intersect many distinct balls $B_{Q} \supset D_{Q}$. Recalling Proposition \ref{prop2}, this is helpful for finding a lower bound for $\w(\mathcal{T})$. 

Let us verify that $E_{+}$ is $n$-regular, with $n$-regularity constant $\lesssim 1$. We leave checking the lower bound to the reader. To check the upper bound, fix $x \in E_{+}$ and a radius $r > 0$. Since $E$ itself is $n$-regular, it suffices to show that
\begin{equation}\label{form59} \sum_{Q \in \calL_{G}} \calH^{n}(D_{Q} \cap B(x,r)) \lesssim r^{n}. \end{equation} 
Write
\begin{displaymath} \calL_{G}^{\leq} := \{Q \in \mathcal{L}_{G} : D_{Q} \cap B(x,r) \neq \emptyset \text{ and } \mathrm{rad}(D_{Q}) \leq r\} \end{displaymath}
and
\begin{displaymath} \calL_{G}^{>} := \{Q \in \mathcal{L}_{G} : D_{Q} \cap B(x,r) \neq \emptyset \text{ and } \mathrm{rad}(D_{Q}) > r\}. \end{displaymath}
For every $Q \in \calL^{\leq}_{G}$ we have $Q \subset B(x,C'r)$ for some constant $C' \sim 1$, so 
\begin{displaymath} \sum_{Q \in \calL_{G}^{\leq}} \calH^{n}(D_{Q} \cap B(x,r)) \lesssim \mathop{\sum_{Q \in \calL}}_{Q \subset B(x,C',r)} \mu(Q) \leq \mu(B(x,C'r)) \lesssim r^{n}. \end{displaymath}
Here we used that the leaves $\mathcal{L}$ consist of disjoint cubes. To finish the proof of \eqref{form59}, we claim that $\card \calL_{G}^{>} \leq 1$. Assume to the contrary that $D_{Q},D_{Q'} \in \calL_{G}^{>}$ with $Q \neq Q'$. Then certainly $2B_{Q} \cap B(x,r) \neq \emptyset \neq 2B_{Q'} \cap B(x,r)$, and both $B_{Q},B_{Q'}$ have diameters $\geq r$. This forces $10B_{Q} \cap 10B_{Q'} \neq \emptyset$, violating the separation condition \eqref{form57}. This completes the proof of \eqref{form59}.  

Let $\mu_{+} := \calH^{n}|_{E \cup E_{D}} = \mu + \sum_{Q \in \mathcal{L}_{G}} \calH^{n}|_{D_{Q}}$, and define the associated $\beta$-numbers
\begin{displaymath} \beta_{+}(B(x,r)) := \inf_{V \in \mathcal{A}(d,n)} \frac{1}{r^{n}} \int_{B(x,r)} \frac{\dist(y,V)}{r} \, d\mu_{+}(y), \qquad x \in E_{+}, \, r > 0.  \end{displaymath}
We next claim that for every $x \in E_{D}$ there exist $\gtrsim M$ distinct dyadic radii $0 < r \lesssim 1$ such that $\beta_{+}(B(x,r)) \gtrsim \epsilon$. This follows easily by recalling that if $x \in D_{Q}$ with $Q \in \calL_{Q} \subset \calL$, then 
\begin{displaymath} \card \{Q' \in \calT : Q \subset Q' \subset Q_{0} \text{ and } \beta(Q') \geq \epsilon\} = M \end{displaymath}
by the definition of good leaves, but let us be careful: let $x \in D_{Q}$, and let $Q' \in \calT$ be one of the ancestors of $Q$ with 
\begin{displaymath} \inf_{V \in \mathcal{A}(d,n)} \frac{1}{\mathrm{rad}(B_{Q'})^{n}} \int_{B_{Q'}} \frac{\dist(y,V)}{\mathrm{rad}(B_{Q'})} \, d\mu(y) = \beta(Q') \geq \epsilon. \end{displaymath}
Since $x \in D_{Q} \subset B_{Q} \subset B_{Q'}$, we have $B_{Q'} \subset B(x,r)$ for some (dyadic) $r \sim \mathrm{rad}(B_{Q'}) \lesssim 1$. Then, if $V \in \mathcal{A}(d,n)$ is arbitrary, we simply have
\begin{displaymath} \frac{1}{r^{n}} \int_{B(x,r)} \frac{\dist(y,V)}{r} \, d\mu_{+}(y) \geq \frac{1}{r^{n}} \int_{B(x,r)} \frac{\dist(y,V)}{r} \, d\mu(y) \gtrsim \epsilon, \end{displaymath}
which proves that $\beta_{+}(B(x,r)) \gtrsim \epsilon$. A fixed radius "$r$" can only be associated to $\lesssim 1$ cubes $Q'$ in the ancestry of $Q$, so $\gtrsim M$ of them arise in the manner above. The claim follows.

We note that 
\begin{equation}\label{form60} \mu_{+}(E_{D}) \gtrsim \mu(\cup \mathcal{L}_{G}) \sim 1. \end{equation}
We aim to apply Proposition \ref{prop1} to the set $E_{D}$, but we will perform a final pruning before doing so. Let $c > 0$ be a small constant to be determined soon, and let $\calL_{G,\mathrm{light}} \subset \calL_{G}$ consist of the good leaves with the following property: there exists a point $x_{Q} \in D_{Q}$ and a radius $0 < r_{Q} \leq 1$ such that
\begin{equation}\label{form61} \mu_{+}(E_{D} \cap B(x_{Q},r_{Q})) \leq c r^{n}_{Q}. \end{equation} 
Evidently $D_{Q} \subset B(x_{Q},r_{Q}/5)$ if $c > 0$ is small enough, since if $D_{Q} \not\subset B(x_{Q},r_{Q}/5)$, then 
\begin{displaymath} \mu_{+}(E_{D} \cap B(x,r_{Q})) \geq \mu_{+}(D_{Q} \cap B(x,r_{Q}/5)) \sim r^{n}_{Q}. \end{displaymath}
We also observe that since $x_{Q} \in D_{Q} \subset B_{Q} \subset B_{Q_{0}}$, and $r_{Q} \leq 1 = \ell(Q_{0})$, we have $B(x_{Q},r_{Q}) \subset 2B_{Q_{0}}$ for all $Q \in \calL_{G,\mathrm{light}}$. Now, use the $5r$ covering theorem to find a subset $\calL' \subset \mathcal{L}_{G,\mathrm{light}}$ such that the associated balls $B(x_{Q},r_{Q}/5)$ are disjoint, and 
\begin{displaymath} \bigcup_{Q \in \calL_{G,\mathrm{light}}} D_{Q} \subset \bigcup_{Q \in \mathcal{L}_{G,\mathrm{light}}} B(x_{Q},\tfrac{1}{5}r_{Q}) \subset \bigcup_{Q \in \mathcal{L}'} B(x_{Q},r_{Q}). \end{displaymath}
It follows from \eqref{form61}, and the $n$-regularity of $\mu_{+}$, that
\begin{displaymath} \mu_{+}\Big( \bigcup_{Q \in \mathcal{L}_{G,\mathrm{light}}} D_{Q} \Big) \leq c \sum_{Q \in \mathcal{L}'} r^{n}_{Q} \lesssim c \sum_{Q \in \mathcal{L}'} \mu_{+}(B(x_{Q},\tfrac{r_{Q}}{5})) \leq c\mu_{+}(2B_{Q_{0}}) \lesssim c. \end{displaymath} 
Comparing this upper bound with \eqref{form60}, we find that if $c > 0$ was chosen small enough, depending only on the PBP and $n$-regularity constants of $E$, then 
\begin{displaymath} \sum_{Q \in \mathcal{L}_{G,\mathrm{heavy}}} \mu_{+}(D_{Q}) \gtrsim 1,  \end{displaymath} 
where $\mathcal{L}_{G,\mathrm{heavy}} = \mathcal{L}_{G} \, \setminus \, \mathcal{L}_{G,\mathrm{light}}$. Let $E_{D,\mathrm{dense}}$ be the union of the discs $D_{Q}$ with $Q \in \mathcal{L}_{G,\mathrm{heavy}}$. We summarise the properties of $E_{D,\mathrm{dense}} \subset E_{D} \subset E_{+}$:
\begin{enumerate}
\item\label{1} $\mu_{+}(E_{D,\mathrm{dense}}) \sim 1$,
\item\label{2} If $x \in E_{D,\mathrm{dense}}$, there are $\gtrsim M$ dyadic scales $0 < r \lesssim 1$ such that $\beta_{+}(B(x,r)) \gtrsim \epsilon$,
\item\label{3} If $x \in E_{D,\mathrm{dense}}$, then $\mu_{+}(E_{D} \cap B(x,r)) \gtrsim r$ for all $0 < r \leq 1$.
\end{enumerate}
We then apply Proposition \ref{prop1} to the set $E_{D,\mathrm{dense}}$ with a "multiplicity" parameter $H \geq 1$ to be chosen later. As usual, the choice of the parameter $H$ will eventually only depend on the $n$-regularity and PBP constants of $E$. The parameters $\alpha$ and $\theta$ in the statement of the proposition are set to be such that $\alpha \sim_{d,\delta} 1$ (specifics to follow later), and $\theta \sim 1$ is so small that $\calH^{n}(E_{D,\mathrm{dense}}) \geq \theta$, which is possible by (1) above. As a good first approximation of how to choose $\alpha$, recall from Lemma \ref{MOLemma} that if $x \in \R^{d}$ and $|\pi_{V_{0}}(x)| \leq \alpha |x|$, where $\alpha = \alpha(d,\delta) > 0$ is small enough, then there exists a plane $V \in B_{G(d,n)}(V_{0},\tfrac{\delta}{2}) = S$ such that $\pi_{V}(x) = 0$. In symbols, the previous statement is equivalent to
\begin{equation}\label{form96} X(0,V_{0},\alpha) \subset \bigcup_{V \in S} V^{\perp} =: \mathcal{C}(S). \end{equation}
In fact, in the case $n = d - 1$, this would be a suitable definition for $\alpha$, and the reader may think that $\alpha$ is at least so small that \eqref{form96} holds. In the case $n < d - 1$, additional technicalities force us to pick $\alpha$ slightly smaller.

Proposition \ref{prop1} then states that if $M \geq 1$ is chosen large enough, in a manner depending only on $\alpha,H,d,\delta,\epsilon,\theta$, and the $n$-regularity constant of $E$, the following holds: there exists a subset $G \subset E_{D,\mathrm{dense}}$ of measure 
\begin{equation}\label{form73} 1 \gtrsim \calH^{n}(G) \gtrsim \theta \sim 1 \end{equation}
with the property
\begin{equation}\label{form62} \card \{j \geq 0 : X(x,V_{0},\tfrac{\alpha}{2},2^{-j - 1},2^{-j}) \cap E_{D,\mathrm{dense}} \neq \emptyset\} \geq H, \qquad x \in G. \end{equation}
(The upper bound in \eqref{form73} follows from $G \subset E_{D}$ and $\diam(E_{D}) \lesssim \ell(Q_{0}) = 1$). We next upgrade \eqref{form62} to a measure estimate, using the definition of $E_{D,\mathrm{dense}}$. Namely, recall from \eqref{3} above that if $y \in E_{D,\mathrm{dense}}$, then $\mu_{+}(E_{D} \cap B(y,r)) \gtrsim r^{n}$ for all $0 < r \leq 1$. By definitions and a few applications of the triangle inequality,
\begin{displaymath} y \in X(x,V_{0},\tfrac{\alpha}{2},2^{-j - 1},2^{-j}) \quad \Longrightarrow \quad B(y,\alpha2^{-j - 10}) \subset X(x,V_{0},\alpha,2^{-j - 2},2^{-j + 1}), \end{displaymath}
and hence
\begin{equation}\label{form95} \calH^{n}(E_{D} \cap X(x,V_{0},\alpha,2^{-j - 2},2^{-j + 1})) \gtrsim \mu_{+}(E_{D} \cap B(y,\alpha 2^{-j - 10})) \gtrsim 2^{-jn} \end{equation} 
for all those scales $2^{-j}$ such that $X(x,V_{0},\tfrac{\alpha}{2},2^{-j - 1},2^{-j})$ contains some $y \in E_{D,\mathrm{dense}}$. (Here we used that $\alpha \sim_{d,\delta} 1$.) For $x \in G$, the number of such scales "$2^{-j}$" is no smaller than $H$, by \eqref{form62}, for every such "$2^{-j}$", it follows from \eqref{form95} that at least one of the three scales $2^{-i} \in \{2^{-j - 1},2^{-j},2^{-j + 1}\}$ satisfies $\calH^{n}(E_{D} \cap X(x,V_{0},\alpha,2^{-i - 1},2^{-i})) \geq c2^{-in}$. Here $c \sim 1$ is a constant which records for the implicit constants in \eqref{form95}. Therefore, replacing "$H$" by "$H/3$" without altering notation, we have just proven the following:
\begin{equation}\label{form63} \card \{j \geq 0 : \calH^{n}(E_{D} \cap X(x,V_{0},\alpha,2^{-j - 1},2^{-j})) \geq c 2^{-jn}\} \geq H, \quad x \in G.  \end{equation}
\subsection{Part II: Besicovitch-Federer argument} By following the classical argument of Besicovitch and Federer, we aim to use \eqref{form63} to show that the projections of $E_{D}$ to planes close to $V_{0}$ have plenty of of overlap. This part of the argument will be quite familiar to readers acquainted with the proof of the Besicovitch-Federer projection theorem.

For $V \in S = B_{G(d,n)}(V_{0},\tfrac{\delta}{2})$, write
\begin{displaymath} f_{V} := \sum_{Q \in \mathcal{L}_{G}} \mathbf{1}_{\pi_{V}(B_{Q})}, \end{displaymath}
interpreted as a function on $\R^{n}$, and let $\mathcal{M} f_{V}$ stand for the centred Hardy-Littlewood maximal function of $f_{V}$. We will prove the following claim:
\begin{claim}\label{c1} For every $x \in G$, there exists a subset $S_{x} \subset S$ of measure $\gamma_{d,n}(S_{x}) \gtrsim 1/\sqrt{H}$ with the following property:
\begin{equation}\label{form67} \mathcal{M}f_{V}(\pi_{V}(x)) \gtrsim \sqrt{H}, \qquad V \in S_{x}. \end{equation} 
\end{claim}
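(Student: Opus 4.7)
The plan is to prove Claim \ref{c1} by a Besicovitch–Federer-style integration over $V \in S$, controlling the first and second moments of
\[
g(V) := f_V(\pi_V(x)) = \#\{Q \in \calL_G : \pi_V(x) \in \pi_V(B_Q)\},
\]
and extracting the distributional statement via Paley–Zygmund. Note first that $f_V$, as a sum of indicators of open balls in $V$, is lower semi-continuous, so $\mathcal{M}f_V(\pi_V(x)) \geq g(V)$ pointwise. It is therefore enough to produce $S_x \subset S$ with $\gamma_{d,n}(S_x) \gtrsim 1/\sqrt{H}$ on which $g(V) \gtrsim \sqrt{H}$.

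\emph{Step 1 (first moment).} For each of the $H$ good scales $2^{-j}$ produced by \eqref{form63}, the input is $\calH^n(E_D \cap X_j) \gtrsim 2^{-jn}$, where $X_j := X(x, V_0, \alpha, 2^{-j-1}, 2^{-j})$. Fix such $j$ and any $Q \in \calL_G$ with $D_Q \cap X_j \neq \emptyset$. Since $\pi_V|_{D_Q}$ is bilipschitz for $V \in S$ and $D_Q$ lies inside $X(x, V_0, \alpha)$ at distance $\sim 2^{-j}$ from $x$, a direct Grassmannian computation — combined with Lemma \ref{MOLemma}, which upgrades the "approximate" cone condition $|\pi_{V_0}(c_Q - x)| \lesssim \alpha|c_Q - x|$ to the "exact" $\pi_{V'}(c_Q - x) = 0$ for some $V'$ near $V_0$ — gives
\[
\gamma_{d,n}(S_Q) \gtrsim \left(\tfrac{\ell(Q)}{|c_Q - x|}\right)^n, \qquad S_Q := \{V \in S : \pi_V(x) \in \pi_V(B_Q)\}.
\]
Summing over $Q$'s contributing to $X_j$ and using $\sum_Q \ell(Q)^n \gtrsim \calH^n(E_D \cap X_j) \gtrsim 2^{-jn}$, the per-scale contribution is $\gtrsim 1$. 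After minor bookkeeping to prevent double-counting of any single $Q$ across scales, summing over the $H$ good scales yields $\int_S g(V)\, d\gamma_{d,n}(V) \gtrsim H$.

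\emph{Step 2 (second moment).} Expanding gives $\int_S g^2\, d\gamma_{d,n} = \sum_{Q, Q'} \gamma(S_Q \cap S_{Q'})$. The diagonal recovers the first-moment bound; for the off-diagonal, the event $V \in S_Q \cap S_{Q'}$ requires the affine fibre $x + V^\perp$ to thread both $B_Q$ and $B_{Q'}$, two (approximately) independent codimension-$n$ conditions. Using Lemma \ref{l:fubini} to factor $\gamma_{d,n}$ through the intermediate Grassmannian $G(d, n+1)$, and the fact that the "sightline" directions from $x$ toward distinct dyadic scales decorrelate, one expects a bound of the form
\[
\int_S g(V)^2\, d\gamma_{d,n}(V) \lesssim H^{5/2}.
\]

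\emph{Step 3 (Paley–Zygmund).} The two moment bounds combine via Paley–Zygmund: with threshold $\theta = c/\sqrt{H}$,
\[
\gamma_{d,n}\{V \in S : g(V) \geq c\sqrt{H}\} \;\geq\; (1-\theta)^2 \frac{(\int g\, d\gamma)^2}{\int g^2\, d\gamma} \;\gtrsim\; \frac{H^2}{H^{5/2}} \;=\; \frac{1}{\sqrt{H}},
\]
producing the desired $S_x$. The main obstacle is Step 2: the off-diagonal correlations must be tracked carefully, especially for pairs of discs that may cluster in a radial direction from $x$, where the two constraints fail to be independent; a Fubini reduction to a single line-direction plus an orthogonal slice, together with the bounded multiplicity among the good cones, should absorb these pairs at the cost of the extra $\sqrt{H}$ appearing in the exponent.
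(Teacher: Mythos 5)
Your plan reduces $\mathcal{M}f_{V}(\pi_{V}(x))$ to $g(V) := f_{V}(\pi_{V}(x))$ via $\mathcal{M}f_{V} \geq f_{V}$, and then tries to prove $\gamma_{d,n}(\{g \geq c\sqrt{H}\}) \gtrsim 1/\sqrt{H}$ by Paley--Zygmund. This reduction is lossy in a way that is fatal to the strategy: the maximal function can be $\gtrsim \sqrt{H}$ on a large set of directions even when $g$ is not. Consider a configuration in which, at each of the $H$ good scales $2^{-j}$, the mass of $E_{D} \cap X_{j}$ is carried by a tight angular cluster of $K$ tiny discs of radius $\sim 2^{-j}K^{-1/n}$, with the cluster centres at different scales pointing in "independent" directions. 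Then a fibre $x + V^{\perp}$ either misses the scale-$j$ cluster entirely or hits all $K$ of its discs, so $g_{j}(V) \in \{0, K\}$ and $\gamma_{d,n}(\{g_{j} > 0\}) \sim 1/K$. With near-independent scales one gets $\int_{S} g \sim H$ but $\int_{S} g^{2} \sim HK + H^{2}$, which exceeds your conjectured $H^{5/2}$ as soon as $K \gg H^{3/2}$; simultaneously $\gamma_{d,n}(\{g \geq c\sqrt{H}\}) \sim H/K \ll 1/\sqrt{H}$. So the second-moment bound in Step 2 is not merely unproven ("one expects"): it is false in general, and the conclusion about $g$ itself fails in this regime. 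Yet the conclusion about $\mathcal{M}f_{V}$ still holds, because a slightly thickened slab around those rare fibres captures the entire cluster mass and forces $\mathcal{M}f_{V} \gtrsim \sqrt{H}$ on a set of measure $\sim 1/\sqrt{H}$.

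The paper handles exactly this phenomenon with a case distinction that your argument collapses. In its \textbf{Case 2} (every good scale is hit by a $\gtrsim 1/\sqrt{H}$-proportion of $V \in S$), the paper carries out essentially the calculation you want: it counts, for a typical $V$, how many scales contribute, and this alone gives $g(V) \geq \sqrt{H}$ on a set of measure $\gtrsim 1/\sqrt{H}$, with no second moment or Paley--Zygmund needed. In its \textbf{Case 1} (some good scale is hit by only a $\leq 1/\sqrt{H}$-proportion of $V \in S$), the argument is genuinely about $\mathcal{M}f_{V}$ and not about $g$: a dyadic pigeonholing on the Grassmannian, routed through the intermediate factor $G(d,n+1)$ via Lemma~\ref{l:fubini}, locates a ball $Q \subset G(W,n)$ with $\calH^{n}(\mathcal{C}(Q) \cap E_{j,0}) \gtrsim \sqrt{H} \cdot 2^{-jn}\gamma_{n+1,n}(Q)$, and since $\mathcal{C}(Q,2^{-j-1},2^{-j})$ sits inside a slab of width $\sim 2^{-j}\ell(Q)$, this forces $\mathcal{M}f_{V}(0) \gtrsim \sqrt{H}$ for all $V \in Q$. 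The intermediate Grassmannian is crucial precisely so that $\gamma_{n+1,n}(Q) \sim \ell(Q)^{n}$ matches the $\ell(Q)^{n}$ coming from the slab volume; using $\gamma_{d,n}(Q) \sim \ell(Q)^{n(d-n)}$ directly would lose the estimate when $n < d - 1$. None of this is visible from the moment approach. If you want to salvage the moment route, you would have to apply it to a quantity that already incorporates the maximal-function thickening (and then the exponent $5/2$ has no a priori justification); as written, Steps 2 and 3 do not constitute a proof, and the reduction in the opening paragraph already concedes too much.
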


As usual, the implicit constants here are allowed to depend on $d$, and the $n$-regularity and PBP constants of $E$. During the proof of the claim, we use the abbreviation
\begin{equation}\label{form112} E_{j,x} := E_{D} \cap X(x,V_{0},\alpha,2^{-j - 1},2^{-j}), \qquad j \geq 0. \end{equation}
By \eqref{form63}, there exist $H$ distinct indices $j \geq 0$ such that $\calH^{n}(E_{j,x}) \geq c2^{-jn}$. The proof of the claim splits into two cases: either there is at least one of these indices "$j$" such that $E_{j,x}$ meets only a few planes $\pi_{V}^{-1}\{\pi_{V}(x)\}$, $V \in S$, or then $E_{j,x}$ meets fairly many of the planes $\pi_{V}^{-1}\{\pi_{V}(x)\}$, $V \in S$, for every one of the $H$ indices "$j$".

\medskip

\noindent\textbf{Case 1.} Fix $x \in G$, assume with no loss of generality that $x = 0$. This has the notational benefit that $\pi_{V}^{-1}\{\pi_{V}(x)\} = V^{\perp}$ for $V \in G(d,n)$. Assume that there exists at least one index $j \geq 0$ such that $\calH^{n}(E_{j,x}) \geq c2^{-jn}$, and
\begin{equation}\label{form64} \gamma_{d,n}(\{V \in S : V^{\perp} \cap E_{j,0} \neq \emptyset\}) \leq \frac{1}{\sqrt{H}}. \end{equation} 
Fix such an index $j \geq 0$, and abbreviate $E_{j,0} := E_{0}$. Then \eqref{form64} will imply that most of the (non-negligible) $\calH^{n}$ mass of $E_{0} \subset X(0,V_{0},\alpha)$ is contained in narrow slabs around $(d - n)$-planes with "high density". As in the classical proof of the Besicovitch-Federer projection theorem, the case $n < d - 1$ requires integralgeometric considerations, whose necessity will only become clear at the very end of \textbf{Case 1}. Fortunately, they also make technical sense in the case $n = d - 1$ (they just become trivial), so the case $n = d - 1$ does not require separate treatment. As in Section \ref{s:Grassmannian}, we define
\begin{displaymath} G(W,n) := \{V \in G(d,n) : V \subset W\} \cong G(n + 1,n), \qquad W \in G(d,n + 1), \end{displaymath}
and we write $\gamma_{W,n + 1,n}$ for the $\mathcal{O}(d)$-invariant probability measure on $G(W,n)$. The metric on $G(W,n)$ is inherited from $G(d,n)$. Recall the Fubini formula established in Lemma \ref{l:fubini}:

\begin{equation}\label{fubini} \gamma_{d,n}(B) = \int_{G(d,n + 1)} \gamma_{W, n + 1,n}(B) \, d\gamma_{d,n + 1}(W) \end{equation}
for $B \subset G(d,n)$ Borel. We will need to find a Borel set $\mathcal{W} \subset G(d,n + 1)$, in fact a ball, which may depend on $j$ and $x$, with the following properties:
\begin{enumerate}
\item[(W1) \phantomsection\label{W1}] $\gamma_{d,n + 1}(\mathcal{W}) \sim_{d,\delta} 1$,
\item[(W2) \phantomsection\label{W2}] For every $W \in \mathcal{W}$, the set $S \cap G(W,n)$ contains a ball $S_{W} = B_{G(W,n)}(V_{W},\tfrac{\delta}{4})$,
\item[(W3) \phantomsection\label{W3}] There exists a subset $E_{\mathcal{W},0} \subset E_{0}$ of measure $\calH^{n}(E_{\mathcal{W},0}) \geq c2^{-jn}$ with the property
\begin{displaymath} E_{\mathcal{W},0} \subset \bigcup_{V \in S_{W}} V^{\perp}, \qquad W \in \mathcal{W}. \end{displaymath} 
\end{enumerate}
The "$c$" appearing in property \nref{W3} may be a constant multiple (depending on $\delta,d$) of the constant in $\calH^{n}(E_{0}) \geq c2^{-jn}$. Finding $\mathcal{W}$ with the properties \nref{W1}-\nref{W3} is easy if $n = d - 1$, so let us discuss this case first to get some intuition. Simply take $\mathcal{W} := G(d,d) = \{\R^{d}\}$. Note that in this case $G(W,n) \equiv G(d,n)$. Evidently \nref{W1}-\nref{W2} are satisfied, even with $S_{W} := S$. Also, \nref{W3} is satisfied with $E_{\mathcal{W},0} := E_{0}$ by \eqref{form96}, which implies that $E_{0} \subset X(0,V_{0},\alpha) \subset \bigcup_{V \in S} V^{\perp}$.

We then treat the general case. In the process, we also finally fix the angular parameter $\alpha \sim_{d,\delta} 1$. Recall that $E_{0} \subset X(0,V_{0},\alpha,2^{-j - 1},2^{-j})$, that is, $|\pi_{V_{0}}(z)| \leq \alpha |z|$ and $|z| \sim 2^{-j}$ for all $z \in E_{0}$. Start by choosing a point $z_{0} \in E_{0}$ such that
\begin{equation}\label{form100} \calH^{n}(E_{0} \cap B(z_{0},\rho 2^{-j})) \gtrsim_{\delta,d} 2^{-jn}, \end{equation}
where $0 < \rho \leq \min\{\tfrac{1}{10},\alpha,\delta\}$ is a parameter to be chosen momentarily (we will have $\rho \sim_{\delta,d} 1$). We then define 
\begin{displaymath} E_{\mathcal{W},0} := E_{0} \cap B(z_{0},\rho 2^{-j}), \end{displaymath}
so at least the measure estimate in \nref{W3} is satisfied by \eqref{form100}. Write $W_{0} := \spa(V_{0},z_{0}) \in G(d,n + 1)$ (evidently $z_{0} \notin V_{0}$ since $|\pi_{V_{0}}(z_{0})| < |z_{0}|$), and set $\mathcal{W} := B(W_{0},\rho)$. Then $\gamma_{d,n + 1}(\mathcal{W}) \sim_{d,\delta} 1$, so property \nref{W1} is satisfied. 

We next verify  \nref{W2}. Let $W \in \mathcal{W}$, that is, $d(W,W_{0}) \leq \rho$. Then, since $V_{0} \subset W_{0}$, Lemma \ref{GrLemma} implies that there exists a plane $V_{W} \in G(W,n)$ with $d(V_{W},V_{0}) \lesssim \rho$. In particular, $V_{W} \in B_{G(d,n)}(V_{0},\tfrac{\delta}{4})$ if $\rho$ is chosen small enough, and consequently
\begin{displaymath} S_{W} := B_{G(W,n)}(V_{W},\tfrac{\delta}{4}) \subset S. \end{displaymath}
This completes the proof of \nref{W2}.

To prove \nref{W3}, we need to check that if $W \in \mathcal{W}$ and $z \in E_{\mathcal{W},0}$, then there exists a plane $V \in S_{W}$ with $\pi_{V}(z) = 0$. This will be accomplished by an application of Lemma \ref{MOLemma} inside $W \cong \R^{n + 1}$. First, since $z \in E_{\mathcal{W},0} \subset E_{0}$, $V_{W} \subset W$, and $d(V_{W},V_{0}) \lesssim \rho \leq \alpha$, we have
\begin{equation}\label{form106} |\pi_{V_{W}}(\pi_{W}(z))| = |\pi_{V_{W}}(z)| \leq d(V_{W},V_{0}) \cdot |z| + |\pi_{V_{0}}(z)| \lesssim \alpha |z|. \end{equation}
Second,
\begin{equation}\label{form107} |\pi_{W}(z)| \geq |\pi_{W_{0}}(z_{0})| - d(W,W_{0}) \cdot |z_{0}| - |z - z_{0}| \gtrsim |z|, \end{equation}
using that $z_{0} \in W_{0}$, and $z \in B(z_{0},\rho 2^{-j}) \subset B(z_{0},|z_{0}|/2)$, and $d(W,W_{0}) \leq \rho$. Combining \eqref{form106}-\eqref{form107}, and setting $z_{W} := \pi_{W}(z) \in W$, we find that
\begin{equation}\label{form108} |\pi_{V_{W}}(z_{W})| \lesssim \alpha |z_{W}|. \end{equation}
Finally, the estimate \eqref{form108} allows us to apply Lemma \ref{MOLemma} to the point $z_{W} \in W$ in the space $G(W,n) \cong G(n + 1,n)$. The conclusion is that if $\alpha$ is small enough, depending only on $\delta,n$, then there exists a plane $V \in B_{G(W,n)}(V_{W},\tfrac{\delta}{4}) = S_{W}$ such that $\pi_{V}(z_{W}) = 0$. But now $V \subset W$, and $\pi_{W}(z - z_{W}) = 0$, so also $\pi_{V}(z) = \pi_{V}(z_{W}) + \pi_{V}(z - z_{W}) = 0$. This is what we claimed, so the proof of \nref{W3} is complete.

After the preparations \nref{W1}-\nref{W3}, we can get to the business of verifying Claim \ref{c1} in \textbf{Case 1}. Recall from the main assumption \eqref{form64} that $\gamma_{d,n}(\{V \in S : V^{\perp} \cap E_{0} \neq \emptyset\}) \leq 1/\sqrt{H}$. Combined with the Fubini formula \eqref{fubini}, this implies that the set of planes $W \in G(d,n + 1)$ such that
\begin{equation}\label{form104} \gamma_{W,n + 1,n}(\{V \in S_{W} : V^{\perp} \cap E_{0} \neq \emptyset\}) \geq \frac{C}{\sqrt{H}} \end{equation} 
has $\gamma_{d,n + 1}$-measure at most $C^{-1}$, for $C \geq 1$. Choose $C \sim_{\delta} 1$ here so large that the planes $W \in G(d,n + 1)$ in question have total measure $\leq \tfrac{1}{2}\gamma_{d,n + 1}(\mathcal{W})$. After discarding these "bad" planes from $\mathcal{W}$, we may assume that the opposite of \eqref{form104} holds for all $W \in \mathcal{W}$:
\begin{equation}\label{form105} \gamma_{W,n + 1,n}(\{V \in S_{W} : V^{\perp} \cap E_{0} \neq \emptyset\}) \leq \frac{C}{\sqrt{H}}. \end{equation} 
Fix $W \in \mathcal{W}$, so \eqref{form105} holds, and abbreviate $\gamma_{W,n + 1,n} =: \gamma_{n + 1,n}$. Then, let $\mathcal{S}$ be a system of dyadic cubes on the ($n$-regular) ball $S_{W} \subset G(W,n)$, with top cube $S_{W}$. Then, cover the set
\begin{displaymath} \bar{S}_{W} := \{V \in S_{W} : V^{\perp} \cap E_{0} \neq \emptyset\} \end{displaymath}
by a disjoint collection $\mathcal{Q} \subset \mathcal{S}$ of these cubes such that
\begin{displaymath} \sum_{Q \in \mathcal{Q}} \gamma_{n + 1,n}(Q) \leq \frac{2C}{\sqrt{H}}. \end{displaymath}
For $Q \in \mathcal{Q}$, write $\mathcal{C}(Q) := \cup \{V^{\perp} : V \in Q\}$, generalising the notation $\mathcal{C}(S)$ introduced in \eqref{form96}. Since $\bar{S}_{W}$ is covered by the cubes $Q \in \mathcal{Q}$, the set $E_{\mathcal{W},0} \subset E_{0} \cap \bigcup_{V \in S_{W}} V^{\perp}$ is covered by the cones $\mathcal{C}(Q)$, $Q \in \mathcal{Q}$. Now, let $\mathcal{Q}_{\mathrm{light}}$ be the cubes $Q \in \mathcal{Q}$ satisfying
\begin{equation}\label{form66} \calH^{n}(\mathcal{C}(Q) \cap E_{\mathcal{W},0}) \leq \tfrac{c}{4C}\sqrt{H} \cdot 2^{-jn} \cdot \gamma_{n + 1,n}(Q). \end{equation} 
Then,
\begin{displaymath} \sum_{Q \in \mathcal{Q}_{\mathrm{light}}} \calH^{n}(\mathcal{C}(Q) \cap E_{\mathcal{W},0}) \leq \tfrac{c}{4C}\sqrt{H} \cdot 2^{-jn} \sum_{Q \in \mathcal{Q}} \gamma_{n + 1,n}(Q) \leq \tfrac{c}{2} \cdot 2^{-jn}. \end{displaymath}
Recalling from \nref{W3} that $\calH^{n}(E_{\mathcal{W},0}) \geq c2^{-jn}$, and that $E_{\mathcal{W},0}$ is covered by the union of the cones $\mathcal{C}(Q)$, $Q \in \mathcal{Q}$, we infer that there is a subset $\bar{E}_{\mathcal{W},0} \subset E_{\mathcal{W}}$ of measure $\mathcal{H}^{n}(\bar{E}_{\mathcal{W},0}) \geq \tfrac{c}{2} \cdot 2^{-jn}$ which is covered by the union of the cones $\mathcal{C}(Q)$, $Q \in \mathcal{Q} \, \setminus \, \mathcal{Q}_{\mathrm{light}}$. Every cube $Q \in \mathcal{Q} \, \setminus \, \mathcal{Q}_{\mathrm{light}}$ satisfies the inequality reverse to \eqref{form66}, and is consequently contained in some maximal cube in $\mathcal{S}$ with this property. Let $\mathcal{Q}_{\mathrm{heavy}}$ be the collection of such maximal (hence disjoint) cubes. Then, since $Q \subset Q'$ implies $\mathcal{C}(Q) \subset \mathcal{C}(Q')$, we see that $\bar{E}_{\mathcal{W},0}$ is also covered by the union of the cones $\mathcal{C}(Q)$, $Q \in \mathcal{Q}_{\mathrm{heavy}}$, and consequently
\begin{equation}\label{form68} \sum_{Q \in \mathcal{Q}_{\mathrm{heavy}}} \calH^{n}(\mathcal{C}(Q) \cap \bar{E}_{\mathcal{W},0}) \geq \tfrac{c}{2} \cdot 2^{-jn}. \end{equation}
We moreover claim that the union of the heavy cubes, denoted $H_{W}$, satisfies
\begin{equation}\label{form65} \gamma_{n + 1,n}(H_{W}) = \sum_{Q \in \mathcal{Q}_{\mathrm{heavy}}} \gamma_{n + 1,n}(Q) \gtrsim \frac{1}{\sqrt{H}}.  \end{equation} 
Indeed, if $S_{W} \in \mathcal{Q}_{\mathrm{heavy}}$, there is nothing to prove, since $\gamma_{n,n + 1}(S_{W}) \sim_{\delta,d} 1$. If, on the other hand, $S_{W} \notin \mathcal{Q}_{\mathrm{heavy}}$, then the parent $\hat{Q}$ of every cube $Q \in \mathcal{Q}_{\mathrm{heavy}}$ satisfies \eqref{form66}, by the maximality of $Q$. Of course \eqref{form68} remains valid if we replace "$Q$" by "$\hat{Q}$". Putting these pieces together, we find that
\begin{align*} \sum_{Q \in \mathcal{Q}_{\mathrm{heavy}}} \gamma_{n + 1,n}(Q) & \gtrsim \sum_{Q \in \mathcal{Q}_{\mathrm{heavy}}} \gamma_{n + 1,n}(\hat{Q})\\
& \geq \frac{C \cdot 2^{jn + 2}}{c\sqrt{H}}\sum_{Q \in \mathcal{Q}_{\mathrm{heavy}}} \calH^{n}(\mathcal{C}(\hat{Q}) \cap \bar{E}_{\mathcal{W},0}) \stackrel{\eqref{form68}}{\geq} \frac{1}{\sqrt{H}}. \end{align*}
This completes the proof of \eqref{form65}.

We are now ready to prove Claim \ref{c1} in \textbf{Case 1}, that is, define the set $S_{x} = S_{0} \subset S$ such that \eqref{form67} holds for all $V \in S_{0}$. Define
\begin{equation}\label{form102} S_{0} := \bigcup_{W \in \mathcal{W}} H_{W} \subset \bigcup_{W \in \mathcal{W}} S_{W} \subset S. \end{equation}
Then, by the Fubini formula \eqref{fubini}, and the uniform lower bound \eqref{form65}, we have
\begin{displaymath} \gamma_{d,n}(S_{0}) \geq \int_{\mathcal{W}} \gamma_{W,n + 1,n}(H_{W}) \, d\gamma_{d,n + 1}(W) \stackrel{\eqref{form65}}{\gtrsim} \frac{\gamma_{d,n + 1}(\mathcal{W})}{\sqrt{H}} \stackrel{\nref{W1}}{\sim_{\delta,d}} \frac{1}{\sqrt{H}}, \end{displaymath}
as required by Claim \ref{c1}. It remains to establish the lower bound \eqref{form67}, namely that if $V \in S_{0} (= S_{x})$, then $\mathcal{M}f_{V}(\pi_{V}(x)) = \mathcal{M}f_{V}(0) \gtrsim \sqrt{H}$. Fix $V \in S_{0}$, let first $W \in \mathcal{W}$ be such that $V \in H_{W}$, and then let $Q \in \mathcal{Q}_{W,\mathrm{heavy}} = \mathcal{Q}_{\mathrm{heavy}}$ be the unique cube with $V \in Q$ (we do not claim, however, that the choice of $W$ would be unique). By definitions, especially recalling that $E_{\mathcal{W},0} \subset E_{0} \subset E_{D} \cap \bar{B}(2^{-j}) \, \setminus B(2^{-j - 1})$, we have
\begin{equation}\label{form70} \calH^{n}(\mathcal{C}(Q,2^{-j - 1},2^{-j}) \cap E_{D}) \geq \calH^{n}(\mathcal{C}(Q) \cap E_{\mathcal{W},0}) \geq \tfrac{c}{4C}\sqrt{H} \cdot 2^{-jn} \cdot \gamma_{n + 1,n}(Q), \end{equation}
where of course $\mathcal{C}(Q,r,R) := \mathcal{C}(Q) \cap \bar{B}(R) \, \setminus \, B(r)$, and we recall that $\mathcal{C}(Q) = \{V^{\perp} : V \in Q\}$. Note that $\mathcal{C}(Q,2^{-j - 1},2^{-j}) \subset T = T_{V}$, where $T \subset \R^{d}$ is a slab of the form
\begin{displaymath} T := \pi_{V}^{-1}[B(0,C2^{-j}\ell(Q))] \end{displaymath}
of width $\sim_{d} 2^{-j}\ell(Q)$ around the plane $V^{\perp} \in G(d,d - n)$. Indeed, if $x \in \mathcal{C}(Q,2^{-j - 1},2^{-j})$, then $\pi_{V'}(x) = 0$ for some $V' \in Q$. Then $d(V,V') \lesssim_{d} \ell(Q)$, and $|\pi_{V}(x)| \leq d(V,V') \cdot |x| \lesssim 2^{-j}\ell(Q)$, which means that $x \in T$ if the constant $C \geq 1$ is chosen appropriately.

Write $B_{V} := B(0,C2^{-j}\ell(Q)) \subset V$. With this notation, recalling that $D_{Q} \subset B_{Q}$, and using that the projections $\pi_{V}|_{D_{Q}} \colon D_{Q} \to V$ are bilipschitz for $Q \in \mathcal{L}_{G}$ and $V \in S_{0} \subset S$, we infer that 
\begin{align*} \mathcal{M}f_{V}(0) & \geq \frac{1}{\mathrm{rad}(B_{V})^{n}} \int_{B_{V}} \sum_{Q \in \mathcal{L}_{G}} \mathbf{1}_{\pi_{V}(D_{Q})}(y) \, dy\\
& = \frac{1}{\mathrm{rad}(B_{V})^{n}} \sum_{Q \in \mathcal{L}_{G}} \calH^{n}(B_{V} \cap \pi_{V}(D_{Q}))\\
& \sim \frac{1}{\mathrm{rad}(B_{V})^{n}} \sum_{Q \in \mathcal{L}_{G}} \calH^{n}(T \cap D_{Q})\\
& \geq \frac{\calH^{n}(T \cap E_{D})}{\mathrm{rad}(B_{V})^{n}} \stackrel{\eqref{form70}}{\geq} \frac{(c/4)\sqrt{H} \cdot 2^{-jn} \cdot \gamma_{n + 1,n}(Q)}{\mathrm{rad}(B_{V})^{n}} \sim \sqrt{H}. \end{align*} 
In final estimate, we used that $\gamma_{n + 1,n}(Q) \sim \ell(Q)^{n}$. This is the whole point of the integralgeometric argument: without splitting $G(d,n)$ into a "product" of $G(d,n + 1)$ and $G(W,n)$, we could have, more easily, reached the penultimate estimate with "$\gamma_{d,n}(Q)$" in place of "$\gamma_{n + 1,n}(Q)$". But $\gamma_{d,n}(Q) \sim \ell(Q)^{n(d - n)} \ll \ell(Q)^{n}$ if $n < d - 1$, and the final estimate would have failed. We have now proved Claim \ref{c1} in \textbf{Case 1}.

\medskip

\noindent \textbf{Case 2.} Again, fix $x \in G$, assume with no loss of generality that $x = 0$, and let $j_{1},\ldots,j_{H} \geq 0$ be distinct scale indices such that $\calH^{n}(E_{j_{i},0}) \geq c2^{-j_{i}n}$ for all $1 \leq i \leq H$, recall the notation from \eqref{form112}. This time, we assume that
\begin{equation}\label{form71} \gamma_{d,n}(\bar{S}_{0,i}) \geq \frac{1}{\sqrt{H}}, \qquad 1 \leq i \leq H, \end{equation}
where $\bar{S}_{0,i} := \{V \in S : V^{\perp} \cap E_{j_{i},0} \neq \emptyset\}$. It follows from \eqref{form71} that
\begin{equation}\label{form72} \int_{S} \sum_{i = 1}^{H} \mathbf{1}_{\bar{S}_{0,i}}(V) \, \gamma_{d,n}(V) \geq \sqrt{H}. \end{equation}
Let
\begin{displaymath} S_{x} := S_{0} := \left\{V \in S : \sum_{i = 1}^{H} \mathbf{1}_{\bar{S}_{0,i}}(V) \geq \sqrt{H} \right\}, \end{displaymath} 
Then, it follows by splitting the integration in \eqref{form72} to $S \, \setminus \, S_{0}$ and $S_{0}$, that
\begin{displaymath} \sqrt{H} \leq \sqrt{H} \cdot \gamma_{d,n}(S \, \setminus \, S_{0}) + H \cdot \gamma_{d,n}(S_{0}). \end{displaymath} 
Recalling that $\gamma_{d,n}(S) \leq \tfrac{1}{2}$ (that is, $S = B_{G(d,n)}(V_{0},\tfrac{\delta}{2})$ is a fairly small ball), we find that $\gamma_{d,n}(S_{0}) \gtrsim 1/\sqrt{H}$, as required by Claim \ref{c1}. It remains to check that $\mathcal{M}f_{V}(\pi_{V}(x)) = \mathcal{M}f_{V}(0) \gtrsim \sqrt{H}$ whenever $V \in S_{0}$.

Fixing $V \in S_{0}$, it follows by definition that there are $\geq \sqrt{H}$ indices $i \in \{1,\ldots,H\}$ with the property that $V \in \bar{S}_{0,i}$, which meant by definition that
\begin{displaymath} V^{\perp} \cap E_{D} \supset V^{\perp} \cap E_{j_{i},0} \neq \emptyset. \end{displaymath} 
For each of these indices $i$, the plane $V^{\perp}$ intersects at least one of the discs $D_{Q}$ with $Q \in \mathcal{L}_{G}$, whose union is $E_{D}$. Moreover, since the sets $E_{j,0} \subset \bar{B}(2^{-j}) \, \setminus \, \bar{B}(2^{-j - 1})$ are disjoint for distinct indices $j \geq 0$, we conclude that $V^{\perp}$ meets $\geq \sqrt{H}$ distinct discs $D_{Q}$. Consequently, recalling also that $D_{Q} \subset B_{Q}$ for all $Q \in \mathcal{L}_{G}$,
\begin{displaymath} f_{V}(0) = \sum_{Q \in \mathcal{L}_{G}} \mathbf{1}_{\pi_{V}(B_{Q})}(0) \geq \card\{Q \in \mathcal{L}_{G} : V^{\perp} \cap D_{Q} \neq \emptyset\} \geq \sqrt{H}. \end{displaymath} 
A similar lower bound for $\mathcal{M}f_{V}$ follows easily from the special structure of $f_{V}$: whenever $V \in S_{0} \subset S$ and $f_{V}(0) \geq \sqrt{H}$, we may pick the $h := \sqrt{H}$ largest balls $B_{1},\ldots,B_{h}$ of the form $\pi_{V}(B_{Q}) \subset V$, $Q \in \mathcal{L}_{G}$, which contain $0$. Writing $r := \min\{\mathrm{rad}(B_{k}) : 1 \leq k \leq h\}$,
\begin{displaymath} \mathcal{M}f_{V}(0) \geq \frac{1}{r^{n}}\int_{B_{V}(r)} f_{V}(y) \, d\calH^{n}(y) \geq \frac{1}{r^{n}} \sum_{k = 1}^{h} \calH^{n}(B_{V}(r) \cap B_{k}) \gtrsim \sqrt{H}, \end{displaymath} 
as claimed. This completes the proof of \eqref{form67}, and Claim \ref{c1}, in \textbf{Case 2}.

\subsection{Part III: Conclusion} We then proceed with the proof of Proposition \ref{treeProp}. Recall from \eqref{form73} that $\calH^{n}(G) \sim 1$. In Claim \ref{c1}, we showed that to every $x \in G$ we may associate a set of planes $S_{x} \subset S$ of measure $\gamma_{d,n}(S_{x}) \gtrsim1/\sqrt{H}$ such that $\mathcal{M}f_{V}(\pi_{V}(x)) \gtrsim \sqrt{H}$ holds for all $V \in S_{x}$. Writing $G_{V} := \{x \in G : V \in S_{x}\}$ for $V \in S$, it follows that
\begin{displaymath} \int_{S} \calH^{n}(G_{V}) \, d\gamma_{d,n}(V) = \int_{G} \gamma_{d,n}(S_{x}) \, d\calH^{n}(x) \gtrsim \frac{1}{\sqrt{H}}. \end{displaymath}
Recalling from \eqref{form73} that $\calH^{n}(G_{V}) \leq \calH^{n}(G) \lesssim 1$ for all $V \in S$, we infer that the subset
\begin{displaymath} S_{G} := \{V \in S : \calH^{n}(G_{V}) \gtrsim 1/\sqrt{H}\} \end{displaymath}
has measure $\gamma_{d,n}(S_{G}) \gtrsim 1/\sqrt{H}$. The plan is now to verify that the hypotheses of Proposition \ref{prop2} are valid for the subset $S_{G} \subset S$, and with parameter $N \sim \sqrt{H}$ (this "$N$" has nothing to do with $N = KM$). Consider $V \in S_{G}$. By definition, $\calH^{n}(G_{V}) \gtrsim 1/\sqrt{H}$, and
\begin{equation}\label{form74} \mathcal{M}f_{V}(\pi_{V}(x)) \gtrsim \sqrt{H} =: H', \qquad x \in G_{V}. \end{equation}
Write $H_{V} := \pi_{V}(G_{V})$. Then, \eqref{form74} is equivalent to 
\begin{equation}\label{form77} H_{V} \subset \{\mathcal{M}f_{V} \gtrsim H'\}. \end{equation} 
Moreover, recalling that $G_{V} \subset G \subset E_{D}$ is covered by the discs $D_{Q}$, $Q \in \mathcal{L}_{G}$, and using the inequality (based on $D_{Q} \subset B_{Q}$ and the bilipschitz property of $\pi_{V}|_{D_{Q}} \colon D_{Q} \to V$)
\begin{displaymath} \calH^{n}(G_{V} \cap D_{Q}) \sim \calH^{n}(\pi_{V}(G_{V} \cap D_{Q})) \leq \calH^{n}(\pi_{V}(G_{V}) \cap \pi_{V}(B_{Q})), \qquad Q \in \mathcal{L}_{G}, \, V \in S, \end{displaymath}
we find that
\begin{align} \int_{\{\mathcal{M}f_{V} \gtrsim H'\}} f_{V}(t) \, d\calH^{n}(t) & \geq \int_{H_{V}} f_{V}(t) \, d\calH^{n}(t) \notag\\
& = \sum_{Q \in \mathcal{L}_{G}} \calH^{n}(\pi_{V}(G_{V}) \cap \pi_{V}(B_{Q})) \notag\\
& \gtrsim \sum_{Q \in \mathcal{L}_{G}} \calH^{n}(G_{V} \cap D_{Q}) = \calH^{n}(G_{V}) \notag \\
&\label{form76} \gtrsim 1/\sqrt{H} = 1/H', \qquad V \in S_{G}. \end{align}
Now, \eqref{form76} says that the hypothesis \eqref{form19} of Proposition \ref{prop2} is satisfied for the set of leaves $\mathcal{G} := \mathcal{L}_{G}$, the set of planes $S_{G} \subset S$, and with the constant "$H'$" in place of "$N$". Moreover, by their definition below \eqref{form94}, all the cubes $Q \in \mathcal{L}_{G}$ satisfy the PBP condition with common plane $V_{0}$:
\begin{displaymath} \calH^{n}(\pi_{V}(E \cap B_{Q})) \geq \delta \mu(Q), \qquad Q \in \mathcal{L}_{Q}, \, V \in S = B_{G(d,n)}(V_{0},\tfrac{\delta}{2}). \end{displaymath} 
Consequently, Proposition \ref{prop2} states that if the parameter $H'$ is chosen large enough, depending only on $C_{0}$ and $\delta$, then 
\begin{equation}\label{form110} \w(\calT) \gtrsim c\delta (H')^{-1} \cdot \gamma_{d,n}(S_{G}) \sim 1/H. \end{equation}
As explained above \eqref{form73}, choosing $H' = \sqrt{H}$ this big means forces us to choose the parameter $M \geq 1$ large enough in a manner depending on 
\begin{displaymath} \alpha \sim_{d,\delta} 1, \, C_{0}, \, H \sim_{C_{0},\delta} 1, \, d, \, \delta,\, \epsilon, \, \theta \sim_{C_{0},d,\delta} 1. \end{displaymath} 
So, in fact $M \sim_{C_{0}d,\delta,\epsilon} 1$, as claimed in Proposition \ref{treeProp}. Since the lower bound for $\w(\calT)$ in \eqref{form110} only depends on the $n$-regularity and PBP constant of $E$, the proof of Proposition \ref{treeProp} is complete. 

\medskip

Since Proposition \ref{mainProp} follows from Proposition \ref{treeProp}, and the construction of heavy trees in Section \ref{s:trees}, we have now proved Proposition \ref{mainProp}. As we recorded in Lemma \ref{l:WGL}, this implies that $n$-regular sets $E \subset \R^{d}$ having PBP satisfy the WGL, and then the BPLG property follows from Theorem \ref{t:DS}. This completes the proof of Theorem \ref{main}. 

\appendix

\section{A variant of the Lebesgue differentiation theorem}\label{appA}

Here we prove Lemma \ref{lemma1}, which we restate below for the reader's convenience:

\begin{lemma}\label{lemma1a} Fix $M,d,\gamma \geq 1$ and $c > 0$. Then, the following holds if $A = A_{d} \geq 1$ is large enough, depending only on $d$ (as in "$\R^{d}$"), and 
\begin{equation}\label{form41a} N > A^{(\gamma + 1)^{2}}M^{\gamma + 2}/c \end{equation} 
Let $\mathcal{B}$ be a collection of balls contained in $[0,1)^{d} \subset \R^{d}$, and associate to every $B \in \mathcal{B}$ a weight $w_{B} \geq 0$. Set
\begin{displaymath} f = \sum_{B \in \mathcal{B}} w_{B}\mathbf{1}_{B}, \end{displaymath}
and write $H_{N} := \{\mathcal{M}f \geq N\}$, where $\mathcal{M}f$ is the Hardy-Littlewood maximal function of $f$. Assume that
\begin{equation}\label{form10} \int_{H_{N}} f(x) \, dx \geq cN^{-\gamma}, \end{equation}
Then, there exists a collection $\mathcal{R}_{\mathrm{heavy}}$ of disjoint cubes such that the "sub-functions"
\begin{displaymath} f_{R} := \mathop{\sum_{B \in \mathcal{B}}}_{B \subset R} w_{B}\mathbf{1}_{B}, \qquad R \in \mathcal{R}_{\mathrm{heavy}}, \end{displaymath}
satisfy the following properties:
\begin{equation}\label{form14} \sum_{R \in \mathcal{R}_{\mathrm{heavy}}} \|f_{R}\|_{1} \geq c2^{-2(\gamma + 1)}N^{-\gamma} \quad \text{and} \quad \|f_{R}\|_{1} > M|R|, \qquad R \in \mathcal{R}_{\mathrm{heavy}}. \end{equation} 
\end{lemma}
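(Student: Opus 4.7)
The plan is to reduce to a dyadic setting, apply a stopping-time argument, and iterate to handle cubes whose dyadic average is inflated by ancestor contributions. First I would pass to dyadic cubes via the $3^{d}$ shifted dyadic lattices on $[0,1)^{d}$: every $B \in \mathcal{B}$ is contained in some dyadic cube $Q_{B}$ of side $\sim r(B)$ in one of the lattices, and pigeonholing on the choice of lattice reduces the problem, at a cost depending only on $d$, to the case where the atoms are themselves dyadic cubes. Writing $f = \sum_{Q} w_{Q} \mathbf{1}_{Q}$ with $Q$ dyadic, the key bookkeeping identity is
\[
\langle f \rangle_{R} = h(R) + W(R), \qquad h(R) := \frac{\|f_{R}\|_{1}}{|R|}, \qquad W(R) := \sum_{Q \supsetneq R} w_{Q},
\]
valid for every dyadic $R$, since every dyadic $Q$ meeting $R$ is either contained in $R$ (contributing to $\|f_{R}\|_{1}$) or strictly contains $R$ (contributing the constant $w_{Q}$ on $R$).

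Next I would run a Calder\'on--Zygmund stopping time at level $2M$: the maximal dyadic cubes $\mathcal{R}_{1}$ with $\langle f\rangle_{R} > 2M$ are disjoint, cover $H_{N}$ (for $N$ larger than a constant multiple of $M$, after the shifted-grid comparison $\mathcal{M} f \sim \mathcal{M}_{d} f$), and satisfy $\int_{R} f \in (2M|R|, 2^{d+1} M|R|]$. Using $\int_{H_{N}} f \geq cN^{-\gamma}$ and this upper bound yields $\sum_{R \in \mathcal{R}_{1}} |R| \gtrsim cN^{-\gamma}/M$. Split $\mathcal{R}_{1}$ into \emph{heavy} cubes with $h(R) > M$ and \emph{light} cubes with $W(R) > M$: the identity above forces every $R \in \mathcal{R}_{1}$ into one of the two classes. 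Heavy cubes already meet the conclusion $\|f_{R}\|_{1} > M|R|$, and if they capture a definite fraction of the total mass the proof is complete. The observation that carries through the rest of the argument is that any single dyadic cube $Q$ with $w_{Q} \geq M$ is automatically heavy, since $\|f_{Q}\|_{1} \geq w_{Q}|Q| \geq M|Q|$; so any light $R$ can be ``promoted'' to a maximal ancestor of weight $\geq M$ whenever such an ancestor exists.

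The main obstacle is the remaining case: a light cube whose bound $W(R) > M$ is realized by a long chain of small ancestor weights, no single one exceeding $M$. I expect this is handled by iterating the previous argument with a threshold growing by a factor comparable to $M$ at each stage, each iteration pigeonholing over the dyadic scale of the dominant contributing ancestors. Since the total weight accumulated in ancestors is controlled in terms of $\|f\|_{1}$ (and ultimately by the weak-type inequality for $\mathcal{M}$ applied to the hypothesis $\int_{H_{N}}f \geq cN^{-\gamma}$), the iteration terminates after at most $\gamma+1$ stages, beyond which no further light cubes can survive. Compounding the $A$-losses from the pigeonholes over $\gamma+1$ stages gives the factor $A^{(\gamma+1)^{2}} M^{\gamma+2}/c$ in the assumption \eqref{form41a} on $N$, while the factor $2^{-2(\gamma+1)}$ in the conclusion \eqref{form14} records the at-most-halving of the surviving mass at each iteration. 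Maintaining disjointness of the heavy cubes across iterations, and verifying that a definite fraction of the original $cN^{-\gamma}$ mass survives at each stage, is the main technical difficulty.
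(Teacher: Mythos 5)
Your dyadic reduction (shifted lattices, the identity $\langle f \rangle_{R} = h(R) + W(R)$, the observation that $w_{Q} \geq M$ forces $\|f_{Q}\|_{1} \geq M|Q|$) is in the same spirit as the paper's setup, but the Calder\'on--Zygmund stopping is run at the wrong level, and this is not a detail: it is the heart of the lemma. The paper stops at level $N_{1} = \lfloor N/2 \rfloor$, and at step $k$ re-stops inside the light cubes at $N_{k} = \lfloor N/2^{k} \rfloor$. Because $N_{k} \gg AM$ throughout $k \leq \gamma + 1$, the volume of the $k$-th generation light cubes satisfies the super-geometric bound $\sum |R| \leq A^{k}M^{k}/(N_{1}\cdots N_{k})$, while the $H_{N}$-mass they carry only decays geometrically, $\geq 2^{-k}\theta$. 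Combining this with lightness, $\sum \|f_{R}\|_{1} \leq M\sum |R|$, forces a contradiction by step $k = \gamma + 1$ once $N$ exceeds the bound in \eqref{form41a}.

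With your threshold $\sim M$, the per-step volume decay is at most a fixed factor, which is exactly the same order as the mass decay, so the lightness inequality never fails and no contradiction appears. The growing thresholds $\sim M^{k}$ you propose do not repair this: (i) to keep covering $H_{N}$ the threshold must stay below $N \sim M^{\gamma + 2}$, which caps the number of usable iterations, and (ii) even within that range the per-step volume gain $\sim 1/M$ is far short of the $\sim M/N_{k}$ gain the paper achieves, so the final inequality $M \sum |R| < 2^{-2(\gamma + 1)}cN^{-\gamma}$ is never reached for $N$ satisfying \eqref{form41a}. The promotion idea (pass to a maximal ancestor with $w_{Q} \geq M$) is a nice observation but does not touch the genuinely hard case of spread-out small ancestor weights, which is precisely what the high threshold resolves. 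A secondary but real omission: your volume estimate $\sum_{\mathcal{R}_{1}} |R| \lesssim \|f\|_{1}/M$ is useless without the \emph{a priori} reduction $\|f\|_{1} \leq M$ (otherwise $\{[0,1)^{d}\}$ is already heavy and one is done), which the paper records before any stopping time begins.
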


\begin{remark} Comparing with \eqref{form10}, the first property in \eqref{form14} states a non-negligible fraction of the $L^{1}$-mass of $f$ is preserved in the functions $f_{R}$, $R \in \mathcal{R}_{\mathrm{heavy}}$. In conjunction with \eqref{form41a}, the second property in \eqref{form14} states that the functions $f_{R}$ can be arranged to have arbitrarily high $L^{1}$-density in $R$, at the cost of choosing the parameter $N$ large. \end{remark}

\begin{remark} While proving Lemma \ref{lemma1a}, we will apply the well-known inequalities
\begin{equation}\label{form113} \int_{\{\mathcal{M}f > C\lambda\}} |f(x)| \, dx \lesssim \lambda \cdot |\{\mathcal{M}f > \lambda\}| \lesssim \int_{\{f > \lambda/2\}} |f(x)| \, dx, \end{equation}
valid for $f \in L^{1}(\R^{d})$, every $\lambda > 0$, and a certain constant $C = C_{d} \geq 1$. The first inequality in \eqref{form113} is stated in \cite[(6)]{MR247534}, but we provide the short details. Let $C = C_{d} \geq 1$ be a constant to be specified in a moment. Write $\Omega_{h} := \{\mathcal{M}f > h\}$ for $h > 0$. For every $x \in \Omega_{C\lambda}$, choose a radius $r_{x} > 0$ such that, denoting $B_{x} := B(x,r_{x})$, we have
\begin{equation}\label{form114} C\lambda \leq \frac{1}{|B_{x}|} \int_{B_{x}} |f(x)| \, dx \leq 2C\lambda. \end{equation} 
This is possible, since $f \in L^{1}(\R^{d})$. For example, one can take $r_{x} > 0$ to be the supremum of the (non-empty and bounded set of) radii such that the left hand inequality in \eqref{form114} holds. The radii "$r_{x}$" are uniformly bounded, again by $f \in L^{1}(\R^{d})$. We then apply the $5r$-covering lemma to the balls $\tfrac{1}{5}B_{x}$ to obtain a countable sub-sequence $\{B_{i}\}_{i \in \N} \subset \{B_{x}\}_{x \in \Omega_{C\lambda}}$ with the properties that (i) the balls $\tfrac{1}{5}B_{i}$ are disjoint, and (ii) the balls $B_{i}$ cover $\bigcup \{\tfrac{1}{5}B_{x} : x \in \Omega_{C\lambda}\} \supset \Omega_{C\lambda}$. We observe that if $C = C_{d} \geq 1$ is large enough, it follows from \eqref{form114} that $\tfrac{1}{5}B_{i} \subset \Omega_{\lambda}$ for all $i \in \N$. Consequently,
\begin{align*} |\Omega_{\lambda}| & \stackrel{\textup{(i)}}{\geq} \sum_{i \in \N} |\tfrac{1}{5}B_{i}| \sim \sum_{i \in \N} |B_{i}| \stackrel{\eqref{form114}}{\geq} \frac{1}{2C\lambda} \sum_{i \in \N} \int_{B_{i}} |f(x)|  \, dx \stackrel{\textup{(ii)}}{\geq} \frac{1}{2C\lambda} \int_{\Omega_{C\lambda}} |f(x)| \, dx, \end{align*} 
as desired. For the second inequality in \eqref{form113}, see \cite[(5), p. 7]{MR0290095}. \end{remark}

\begin{proof}[Proof of Lemma \ref{lemma1a}] We begin with an initial reduction. If $f \notin L^{1}([0,1)^{d})$, there is nothing to prove: then $\mathcal{R}_{\mathrm{heavy}} := \{[0,1)^{d}\}$ satisfies the conclusions \eqref{form14}. So, assume that $f \in L^{1}([0,1)^{d})$, and hence $f \in L^{1}(\R^{d})$, since $\spt f \subset [0,1)^{d}$. Let $C = C_{d} \geq 1$ be the constant from \eqref{form113}. Choosing $N/(2C) < \lambda < N/C$, and combining the inequalities \eqref{form113} with the main assumption \eqref{form10}, we find that
\begin{displaymath} \int_{\{f \geq N/(2C)\}} f(x) \, dx \gtrsim \int_{H_{N}} f(x) \, dx \geq cN^{-\gamma}. \end{displaymath} 
With this in mind, we replace $N$ by $N/(2C)$, and we re-define $H_{N}$ to be the set $H_{N} := \{x : f(x) \geq N\}$. As we just argued, the hypothesis \eqref{form10} remains valid with the new notation, possibly with slightly worse constants.

Fix $N \geq 1$ and abbreviate
\begin{displaymath} \theta := cN^{-\gamma} > 0. \end{displaymath}
It would be helpful if the elements in $\mathcal{B}$ were dyadic cubes instead of arbitrary balls, so we first perform some trickery to reduce (essentially) to this situation. There exist $d + 1$ dyadic systems $\mathcal{D}_{1},\mathcal{D}_{2},\ldots,\mathcal{D}_{d + 1}$ with the following property: every cube $Q \subset [0,1)^{d}$, and consequently every ball $B \subset [0,1)^{d}$, is contained in a dyadic cube $R \in \mathcal{D}_{1} \cup \ldots \cup \mathcal{D}_{d + 1}$ with $|R| \leq C_{d}|Q|$ (resp. $|R| \leq C_{d}|B|$). The constant "$d + 1$" is not crucial -- any dimensional constant would do. The fact that $d + 1$ systems in $\R^{d}$ suffice was shown by Mei \cite{MR1993970}, but such "adjacent" dyadic systems can even be produced in metric spaces, see \cite{MR2901199}. 

In particular, for every $B \in \mathcal{B}$, we may assign an index $i = i_{B} \in \{1,\ldots,d + 1\}$, possibly in a non-unique way, such that $B \subset Q'$ for some $Q' \in \mathcal{D}_{i}$ with $|Q'| \leq C_{d}|B|$. We let $\mathcal{B}_{i}$ be the set of balls in $\mathcal{B}$ with fixed index $i \in \{1,\ldots,d + 1\}$, and we write
\begin{displaymath} f_{i} := \sum_{B \in \mathcal{B}_{i}} w_{B}\mathbf{1}_{B}, \qquad i \in \{1,\ldots,d + 1\}. \end{displaymath}
We claim that there exists $i \in \{1,\ldots,d + 1\}$ such that if $H_{N/(d + 1)}^{i} := \{x : f_{i}(x) \geq N/(d + 1)\}$, then
\begin{equation}\label{form9} \int_{H_{N/(d + 1)}^{i}} f_{i}(x) \, dx \geq \frac{\theta}{(d + 1)^{2}}. \end{equation}
Indeed, one notes that if $x \in H_{N}$ is fixed, then $f_{1}(x) + \ldots + f_{d + 1}(x) = f(x) \geq N$, and hence there exists $i = i_{x} \in \{1,\ldots,d + 1\}$ such that $f_{i}(x) \geq f(x)/(d + 1) \geq N/(d + 1)$. In particular $x \in H_{N/(d + 1)}^{i}$. Then $\mathbf{1}_{H_{N/(d + 1)}^{i}}(x)f_{i}(x) \geq f(x)/(d + 1)$ for this particular $i$, and
\begin{displaymath} \sum_{i = 1}^{d + 1} \int_{H_{N}^{i}(x)} f_{i}(x) \, dx \geq \int_{H_{N}} \sum_{i = 1}^{d + 1} \mathbf{1}_{H_{N/(d + 1)}^{i}}(x)f_{i}(x) \, dx \geq \frac{1}{d + 1}\int_{H_{N}} f(x) \, dx \geq \frac{\theta}{d + 1}.  \end{displaymath}
This implies \eqref{form9}. We now fix $i \in \{1,\ldots,d + 1\}$ satisfying \eqref{form9}. Then $f_{i}$ satisfies the hypothesis \eqref{form10} with the slightly worse constants "$\theta/(d + 1)^{2}$" and "$N/(d + 1)$". Also, it evidently suffices to prove the claimed lower bounds in \eqref{form14} for "$f_{i}$" and its "sub-functions"
\begin{displaymath} f_{R}^{i} := \mathop{\sum_{B \in \mathcal{B}_{i}}}_{B \subset R} w_{B}\mathbf{1}_{B} \leq f_{R} \end{displaymath}
in place of $f$ and the "sub-functions" $f_{R}$. Let us summarise the findings: by passing from $\mathcal{B}$ to $\mathcal{B}_{i}$ and from $f$ to $f_{i}$ if necessary, we may assume that every ball in the original collection "$\mathcal{B}$" is contained in an element "$R$" of some dyadic system "$\calD$" with $|R| \leq C_{d}|B|$. We make this \emph{a priori} assumption in the sequel. 

For every dyadic cube $R \in \mathcal{D}$, we define the weight
\begin{displaymath} \ww_{R} := \mathop{\sum_{B \in \mathcal{B}}}_{B \sim R} w_{B}. \end{displaymath}
Here the relation $B \sim R$ means that $B \subset R$, and $|R| \leq C_{d}|B|$. By the previous arrangements, for every $B \in \mathcal{B}$ there exist $\sim_{d} 1$ dyadic cubes $R \in \mathcal{D}$ such that $B \sim R$. It is worth pointing out that
\begin{displaymath} f(x) = \sum_{B \in \mathcal{B}} w_{B}\mathbf{1}_{B}(x) \leq \sum_{R \in \calD} \ww_{R}\mathbf{1}_{R}(x), \qquad x \in [0,1)^{d}, \end{displaymath} 
because if $x \in B \in \mathcal{B}$, then $B \sim R$ for some $R \in \calD$. It follows that $x \in R$, and $w_{B}$ is one of the terms in the sum defining $\ww_{R}$. 

We now begin the proof in earnest. If $\|f\|_{1} > M$ there is nothing to prove: then we simply declare $\mathcal{R}_{\mathrm{heavy}} := \{[0,1)^{d}\}$, and \eqref{form14} is satisfied. So, we may assume that
\begin{equation}\label{form26} \|f\|_{1} \leq M. \end{equation}
We will next perform $k \in \N$ successive stopping time constructions, for some $1 \leq k \leq \gamma + 1$, which will generate a families $\mathcal{R}_{1},\mathcal{R}_{2},\ldots,\mathcal{R}_{k} \subset \mathcal{D}$ of disjoint dyadic cubes. The cubes in $\mathcal{R}_{k + 1}$ will be contained in the union of the cubes in $\mathcal{R}_{k}$. A subset of one of these families will turn out to be the family "$\mathcal{R}_{\mathrm{heavy}}$" whose existence is claimed. 

Let $\mathcal{R}_{1} \subset \mathcal{D}$ be the maximal (hence disjoint) dyadic cubes with the property
\begin{equation}\label{form17} \mathop{\sum_{R' \in \mathcal{D}}}_{R' \supset R} \ww_{R'}\mathbf{1}_{R'}(x) \geq N_{1} := \floor{N/2}, \qquad x \in R. \end{equation} 
Note that the definition is well posed, since the sum on the left hand side of \eqref{form17} is constant on $R$. We first record the easy observation
\begin{equation}\label{form16} H_{N} \subset \bigcup_{R \in \mathcal{R}_{1}} R. \end{equation}
Indeed, if $x \in H_{N}$, then 
\begin{displaymath} \sum_{B \in \mathcal{B}} w_{B}\mathbf{1}_{B}(x) = f(x) \geq N. \end{displaymath}
It then follows from the definition of the coefficients $\ww_{R}$ (and the fact that every $B \in \mathcal{B}$ is contained in some $R \in \mathcal{D}$) that there exist dyadic cubes $R \in \mathcal{D}$ containing $x$ such that \eqref{form17} holds, and in particular $x \in R$ for some $R \in \mathcal{R}_{1}$.

Next, we calculate that
\begin{align} \sum_{R \in \mathcal{R}_{1}} |R| & \leq \sum_{R \in \mathcal{R}_{1}} \frac{1}{N_{1}} \int_{R} \mathop{\sum_{R' \in \mathcal{D}}}_{R' \supset R} \ww_{R'}\mathbf{1}_{R'}(x) \, dx \notag\\
&\label{form32} \leq \frac{1}{N_{1}} \sum_{R' \in \mathcal{D}} \ww_{R'} \mathop{\sum_{R \in \mathcal{R}_{1}}}_{R \subset R'} |R| \leq \frac{1}{N_{1}} \sum_{R' \in \mathcal{D}} \ww_{R'}|R'|,  \end{align} 
since the cubes in $\mathcal{R}_{1}$ are disjoint. Moreover, by \eqref{form26},
\begin{displaymath} \sum_{R' \in \mathcal{D}} \ww_{R'}|R'| \lesssim_{d} \sum_{R' \in \mathcal{D}} \mathop{\sum_{B \in \mathcal{B}}}_{B \sim R'} w_{B}|B| = \sum_{B \in \mathcal{B}} w_{B}|B| \card\{R' : B \sim R'\} \lesssim_{d} \|f\|_{1} \leq M, \end{displaymath}
so
\begin{equation}\label{form11} \sum_{R \in \mathcal{R}_{1}} |R| \leq \frac{AM}{N_{1}} \end{equation}
for some constant $A = A_{d} \geq 1$. The precise relation between this "$A$" and the dimensional constant appearing in the main assumption \eqref{form41a} is that, in the end, we will need $N > (2A)^{\gamma + 1}3^{(\gamma + 1)^{2}}M^{\gamma + 2}/c$. Next, we claim that if $x \in R \in \mathcal{R}_{1}$, then 
\begin{equation}\label{form12} \mathop{\sum_{B \in \mathcal{B}}}_{B \not\subset R} w_{B}\mathbf{1}_{B}(x) \leq \mathop{\sum_{R' \in \mathcal{D}}}_{R' \supsetneq R} \ww_{R'}\mathbf{1}_{R'}(x) < N_{1} \leq N/2. \end{equation} 
The second inequality follows directly from the definition of the maximal cubes $R \in \mathcal{R}_{1}$. Regarding the first inequality, note that if $B \in \mathcal{B}$ is a ball satisfying $x \in B \cap R$ and $B \not\subset R$, then $B \subset R'$ for some strict ancestor $R' \in \mathcal{D}$ of $R$. Then the coefficient $w_{B}$ appears in the sum defining $\ww_{R'}$ for this ancestor $R' \supsetneq R$. As a corollary of \eqref{form12}, and recalling that $f(x) \geq N$ for all $x \in H_{N}$, we record that
\begin{equation}\label{form15} f_{R}(x) := \mathop{\sum_{B \in \mathcal{B}}}_{B \subset R} w_{B}\mathbf{1}_{B}(x) = f(x) - \mathop{\sum_{B \in \mathcal{B}}}_{B \not\subset R} w_{B}\mathbf{1}_{B}(x) \geq \tfrac{1}{2}f(x), \quad x \in R \cap H_{N}, \, R \in \mathcal{R}_{1}. \end{equation}
The proof now splits into two cases: in the first one, we are actually done, and in the second one, a new stopping family $\mathcal{R}_{2}$ will be generated. The case distinction is based on examining the following "heavy" cubes in $\mathcal{R}_{1}$:
\begin{displaymath} \mathcal{R}_{1,\mathrm{heavy}} := \left\{R \in \mathcal{R}_{1} : \|f_{R}\|_{1} > M|R|\right\}. \end{displaymath} 
\subsubsection*{Case 1} Assume first that
\begin{equation}\label{form27} \sum_{R \in \mathcal{R}_{1,\mathrm{heavy}}} \int_{R \cap H_{N}} f(x) \, dx \geq \frac{\theta}{2}. \end{equation} 
Then
\begin{displaymath} \sum_{R \in \mathcal{R}_{1,\mathrm{heavy}}} \|f_{R}\|_{1} \stackrel{\eqref{form15}}{\geq} \frac{1}{2} \sum_{R \in \mathcal{R}_{1,\mathrm{heavy}}} \int_{R \cap H_{N}} f(x) \, dx \geq \frac{\theta}{4}. \end{displaymath}
In this case, we set $\mathcal{R}_{\mathrm{heavy}} := \mathcal{R}_{1,\mathrm{heavy}}$, and the proof terminates, because \eqref{form14} is satisfied.
\subsubsection*{Case 2} Assume next that \eqref{form27} fails, and recall from \eqref{form16} that $H_{N}$ is contained in the union of the cubes in $\mathcal{R}_{1}$. Therefore,
\begin{equation}\label{form28} \sum_{R \in \mathcal{R}_{1,\mathrm{light}}} \int_{R \cap H_{N}} f(x) \, dx \geq \int_{H_{N}} f(x) \, dx - \frac{\theta}{2} \geq \frac{\theta}{2}, \end{equation}
where $\mathcal{R}_{1,\mathrm{light}} = \mathcal{R}_{1} \, \setminus \, \mathcal{R}_{1,\mathrm{heavy}}$.

We now proceed to define the next generation stopping cubes $\mathcal{R}_{2}$. Fix $R_{0} \in \mathcal{R}_{1,\mathrm{light}}$, and consider the maximal dyadic sub-cubes $R \subset R_{0}$ with the property
\begin{equation}\label{form29} \sum_{R \subset R' \subset R_{0}} \ww_{R'}\mathbf{1}_{R'}(x) \geq N_{2} := \floor{N/4}, \qquad x \in R, \end{equation}
Again, the left hand side of \eqref{form29} is constant on $R$, so the stopping condition is well-posed. The cubes so obtained are denoted $\mathcal{R}_{2}(R_{0})$, and we set
\begin{equation}\label{form35} \mathcal{R}_{2} := \bigcup_{R_{0} \in \mathcal{R}_{1,\mathrm{light}}} \mathcal{R}_{2}(R_{0}). \end{equation}
We claim that the (fairly large) part of $H_{N}$ covered by cubes in $\mathcal{R}_{1,\mathrm{light}}$ is remains covered by the cubes in $\mathcal{R}_{2}$. Indeed, fix $x \in R_{0} \cap H_{N}$, where $R_{0} \in \mathcal{R}_{1,\mathrm{light}} \subset \mathcal{R}_{1}$. Then
\begin{displaymath} \mathop{\sum_{R' \in \mathcal{D}}}_{R' \supsetneq R_{0}} \ww_{R'}\mathbf{1}_{R'}(x) < N_{1} \leq N/2 \end{displaymath}
by definitions of $\mathcal{R}_{1}$ and $N_{1}$, so
\begin{displaymath} \mathop{\sum_{R' \in \mathcal{D}}}_{R' \subset R_{0}} \ww_{R'}\mathbf{1}_{R'}(x) \geq N/2, \end{displaymath}
and hence $x$ is contained in some (maximal) dyadic cube $R \subset R_{0}$ satisfying \eqref{form29}.

Arguing as in \eqref{form12}, we infer the following: if $x \in R \in \mathcal{R}_{2}$, then
\begin{equation}\label{form30} \mathop{\sum_{B \in \mathcal{B}}}_{B \not\subset R} w_{B}\mathbf{1}_{B}(x) \leq \mathop{\sum_{R' \in \mathcal{D}}}_{R' \supsetneq R} \ww_{R'}\mathbf{1}_{R'}(x) < N_{1} + N_{2} \leq \frac{3N}{4}. \end{equation} 
Indeed, the first inequality follows exactly as in \eqref{form12}. To see the second inequality, split the cubes $R' \supsetneq R$ into the ranges $R \subsetneq R' \subset R_{0}$ and $R_{0} \subsetneq R \subset [0,1)^{d}$, where $R_{0} \in \mathcal{R}_{1}$. Then, use the definitions of the stopping cubes $\mathcal{R}_{1}$ and $\mathcal{R}_{2}$. As a corollary of \eqref{form30}, we infer an analogue of \eqref{form15} for $R \in \mathcal{R}_{2}$:
\begin{equation}\label{form31} f_{R}(x) = \mathop{\sum_{B \in \mathcal{B}}}_{B \subset R} w_{B}\mathbf{1}_{B}(x) = f(x) - \mathop{\sum_{B \in \mathcal{B}}}_{B \not\subset R} w_{B}\mathbf{1}_{B}(x) \geq \tfrac{1}{4}f(x), \quad x \in R \cap H_{N}, \, R \in \mathcal{R}_{2}. \end{equation}
We next estimate the total volume of the cubes in $\mathcal{R}_{2}$. Fix $R_{0} \in \mathcal{R}_{1,\mathrm{light}}$, and first estimate
\begin{align*} \mathop{\sum_{R \in \mathcal{R}_{2}}}_{R \subset R_{0}} |R| \leq \mathop{\sum_{R \in \mathcal{R}_{2}}}_{R \subset R_{0}} \frac{1}{N_{2}} \int_{R} \sum_{R \subset R' \subset R_{0}} \ww_{R'}\mathbf{1}_{R'}(x) \, dx \leq \frac{1}{N_{2}} \mathop{\sum_{R' \in \mathcal{D}}}_{R' \subset R_{0}} \ww_{R'}|R'|. \end{align*} 
Of course, this computation was just a repetition of \eqref{form32}. Also the next estimate can be carried out in the same way as the estimate just below \eqref{form32}:
\begin{displaymath} \mathop{\sum_{R' \in \mathcal{D}}}_{R' \subset R_{0}} \ww_{R'}|R'| \leq A \|f_{R_{0}}\|_{1} \leq AM|R_{0}|, \qquad R_{0} \in \mathcal{R}_{1,\mathrm{light}}. \end{displaymath} 
Combining the previous two displays, the stopping cubes in $\mathcal{R}_{2}(R_{0})$ have total volume $\leq AM|R_{0}|/N_{2}$ for every $R_{0} \in \mathcal{R}_{1,\mathrm{light}}$. Therefore,
\begin{equation}\label{form33} \sum_{R \in \mathcal{R}_{2}} |R| = \sum_{R_{0} \in \mathcal{R}_{1,\mathrm{light}}} \sum_{R \in \mathcal{R}_{2}(R_{0})} |R| \leq \frac{AM}{N_{2}} \sum_{J_{0} \in \mathcal{J}_{1,\mathrm{light}}} |R_{0}| \leq \frac{A^{2}M^{2}}{N_{1}N_{2}}, \end{equation}
recalling \eqref{form11}. Since $N \gg \max\{A,M\}$, this means that the total volume of the stopping cubes tends to zero rapidly as their generation increases. 

We are now prepared to make another case distinction, this time based on the heavy sub-cubes in $\mathcal{R}_{2}$:
\begin{displaymath} \mathcal{R}_{2,\mathrm{heavy}} := \left\{R \in \mathcal{R}_{2} : \|f_{R}\|_{1} > M|R| \right\}. \end{displaymath}
\subsubsection*{Case 2.1} Assume first that
\begin{equation}\label{form34} \sum_{R \in \mathcal{R}_{2,\mathrm{heavy}}} \int_{R \cap H_{N}} f(x) \, dx \geq \frac{\theta}{4}. \end{equation}
Then,
\begin{equation}\label{form37}  \sum_{R \in \mathcal{R}_{2,\mathrm{heavy}}} \|f_{R}\|_{1} \stackrel{\eqref{form31}}{\geq} \frac{1}{4} \sum_{R \in \mathcal{R}_{2,\mathrm{heavy}}} \int_{R \cap H_{N}} f(x) \, dx \geq \frac{\theta}{16}. \end{equation}
In this case, we declare $\mathcal{R}_{\mathrm{heavy}} := \mathcal{R}_{2,\mathrm{heavy}}$, and we see that \eqref{form14} is satisfied.
\subsubsection*{Case 2.2} Assume then that \eqref{form34} fails. Since the part of $H_{N}$ contained in the $\mathcal{R}_{1,\mathrm{light}}$-cubes is also contained in the $\mathcal{R}_{2}$-cubes (as established right below \eqref{form35}), we deduce from \eqref{form28} that
\begin{displaymath} \sum_{R \in \mathcal{R}_{2,\mathrm{light}}} \int_{R \cap H_{N}} f(x) \, dx \geq \sum_{R \in \mathcal{R}_{1,\mathrm{light}}} \int_{R \cap H_{N}} f(x) \, dx - \frac{\theta}{4} \geq \frac{\theta}{4}. \end{displaymath} 
Here of course $\mathcal{R}_{2,\mathrm{light}} := \mathcal{R}_{2} \, \setminus \, \mathcal{R}_{2,\mathrm{heavy}}$. So, we find ourselves in a situation analogous to \eqref{form28}, except that the integral of $f\mathbf{1}_{H_{N}}$ over the light cubes has decreased by half. 

Repeating the construction above, we proceed to define -- inductively -- new collections of stopping cubes. The stopping cubes $\mathcal{R}_{k}$ are contained in the the union of the stopping cubes $\mathcal{R}_{k - 1,\mathrm{light}}$, and they are defined as the maximal sub-cubes "$R$" of $R_{0} \in \mathcal{R}_{k - 1,\mathrm{light}}$ satisfying 
\begin{displaymath} \sum_{R \subset R' \subset R_{0}} \ww_{R'}\mathbf{1}_{R'}(x) \geq N_{k} := \floor{N/2^{k}}, \qquad x \in R. \end{displaymath}
Repeating the argument under \eqref{form35}, this definition ensures that the part of $H_{N}$ covered by the cubes in $\mathcal{R}_{k - 1,\mathrm{light}}$ remains covered by the union of the cubes in $\mathcal{R}_{k}$. Moreover, induction shows that
\begin{equation}\label{form39} \sum_{R \in \mathcal{R}_{k - 1,\mathrm{light}}} \int_{R \cap H_{N}} f(x) \, dx \geq 2^{-k + 1}\theta, \qquad k \geq 1. \end{equation} 
The general analogue of the inequality \eqref{form31} is
\begin{equation}\label{form38} f_{R}(x) \geq 2^{-k}f(x), \qquad x \in R \cap H_{N}, \, R \in \mathcal{R}_{k}, \end{equation}
and the total volume of the cubes in $\mathcal{R}_{k}$ satisfies
\begin{equation}\label{form36} \sum_{R \in \mathcal{R}_{k}} |R| \leq \frac{A^{k}M^{k}}{N_{1}\cdots N_{k}}, \end{equation}
in analogy with \eqref{form33}. Once the cubes in $\mathcal{R}_{k}$ have been constructed, we split into two cases, depending on whether
\begin{equation}\label{form40} \sum_{R \in \mathcal{R}_{k,\mathrm{heavy}}} \int_{R \cap H_{N}} f(x) \, dx \geq 2^{-k}\theta \quad \text{or} \quad \sum_{R \in \mathcal{R}_{k,\mathrm{light}}} \int_{R \cap H_{N}} f(x) \, dx \geq 2^{-k}\theta. \end{equation} 
One of these cases must occur because of \eqref{form39}, and the covering property stated above \eqref{form39}. In the first case, \eqref{form38} shows that
\begin{displaymath} \sum_{R \in \mathcal{R}_{k,\mathrm{heavy}}} \|f_{R}\|_{1} \geq 2^{-k} \sum_{R \in \mathcal{R}_{k,\mathrm{heavy}}} \int_{R \cap H_{N}} f(x) \, dx \geq 2^{-2k}\theta, \end{displaymath} 
and the proof of \eqref{form14} concludes if $k \leq \gamma + 1$. So, the only remaining task is to show that the first case \textbf{must} occur for some $k \leq \gamma + 1$. Indeed, if the second case of \eqref{form40} occurs for any $k \geq 1$, we have
\begin{displaymath} c2^{-k}N^{-\gamma} = 2^{-k}\theta \leq \sum_{R \in \mathcal{R}_{k,\mathrm{light}}} \|f_{R}\|_{1} \leq M \sum_{R \in \mathcal{R}_{k}} |R| \stackrel{\eqref{form36}}{\leq} \frac{A^{k}M^{k + 1}}{N_{1}\cdots N_{k}}.  \end{displaymath}
Recalling that $N_{k} = \floor{N/2^{k}} \geq N/3^{k}$, hence $N_{1}\cdots N_{k} \geq N^{k}3^{-k^{2}}$, this yields
\begin{displaymath} N^{k - \gamma} \leq \frac{3^{k^{2}}(2A)^{k}M^{k + 1}}{c}. \end{displaymath}
Assuming that $N > 3^{(\gamma + 1)^{2}}(2A)^{\gamma + 1}M^{\gamma + 2}/c$ (in agreement with \eqref{form41a}), the inequality above cannot hold for $k = \gamma + 1$. Thus, the "heavy" case of \eqref{form40} occurs latest at step $k = \gamma + 1$. The proof of the lemma is complete. \end{proof}

\bibliographystyle{plain}
\bibliography{references}

\end{document}